\newcommand{\bburl}[1]{\textcolor{blue}{\url{#1}}}
\let\Horig\H
\newcommand{\p}[1]{{\rm Prob}\left(#1\right)}
\numberwithin{equation}{section}
\newtheorem{thm}{Theorem}[section]
\newtheorem{conj}[thm]{Conjecture}
\newtheorem{cor}[thm]{Corollary}
\newtheorem{lem}[thm]{Lemma}
\theoremstyle{plain}
\newtheorem{defn}[thm]{Definition}
\newtheorem{proposition}[thm]{Proposition}
\newtheorem{rek}[thm]{Remark}
\newcommand\be{\begin{equation}}
\newcommand\ee{\end{equation}}
\newcommand\bea{\begin{eqnarray}}
\newcommand\eea{\end{eqnarray}}
\newcommand\bi{\begin{itemize}}
	\newcommand\ei{\end{itemize}}
\newcommand\ben{\begin{enumerate}}
	\newcommand\een{\end{enumerate}}
\newcommand\bc{\begin{center}}
	\newcommand\ec{\end{center}}
\newcommand\ba{\begin{array}}
	\newcommand\ea{\end{array}}
\newcommand{\fof}{\frac{1}{4}}  
\newcommand{\hr}[1]{\href{#1}{\url{#1}}}
\renewcommand{\H}{\mathbb{H}}
\newcommand*{\reff}[1]{\hyperref[#1]{\ref{#1}}}
\def\@tocline#1#2#3#4#5#6#7{\relax
  \ifnum #1>\c@tocdepth 
  \else
    \par \addpenalty\@secpenalty\addvspace{#2}%
    \begingroup \hyphenpenalty\@M
    \@ifempty{#4}{%
      \@tempdima\csname r@tocindent\number#1\endcsname\relax
    }{%
      \@tempdima#4\relax
    }%
    \parindent\z@ \leftskip#3\relax \advance\leftskip\@tempdima\relax
    \rightskip\@pnumwidth plus4em \parfillskip-\@pnumwidth
    #5\leavevmode\hskip-\@tempdima
      \ifcase #1
       \or\or \hskip 1em \or \hskip 2em \else \hskip 3em \fi%
      #6\nobreak\relax
    \dotfill\hbox to\@pnumwidth{\@tocpagenum{#7}}\par
    \nobreak
    \endgroup
  \fi}
\newenvironment{subtheorem}[1]{%
  \def\subtheoremcounter{#1}%
  \refstepcounter{#1}%
  \protected@edef\theparentnumber{\csname the#1\endcsname}%
  \setcounter{parentnumber}{\value{#1}}%
  \setcounter{#1}{0}%
  \expandafter\def\csname the#1\endcsname{\theparentnumber.\Alph{#1}}%
  \ignorespaces
}{%
  \setcounter{\subtheoremcounter}{\value{parentnumber}}%
  \ignorespacesafterend
}
\newcounter{parentnumber}
\title{Generalizing Ruth-Aaron Numbers}
\author{Yanan Jiang}
\email{\textcolor{blue}{\href{yanan\_jiang22@milton.edu}{yanan\_jiang22@milton.edu}}}
\address{Milton Academy, Milton, MA 02186}
\author{Steven J. Miller}
\email{\textcolor{blue}{\href{mailto:sjm1@williams.edu, Steven.Miller.MC.96@aya.yale.edu}{sjm1@williams.edu,Steven.Miller.MC.96@aya.yale.edu}}}
\address{Department of Mathematics and Statistics, Williams College, Williamstown, MA 01267}
\let\OLDthebibliography\thebibliography
\renewcommand\thebibliography[1]{
  \OLDthebibliography{#1} 
  \setlength{\parskip}{0pt}
  \setlength{\itemsep}{0pt plus 0.5ex}
}
\subjclass[2010]{11N05 (primary), 11N56, 11P32 (secondary)}
\keywords{Ruth-Aaron numbers, largest prime factors, multiplicative functions, rate of growth}
\date{\today}
\begin{document}
\begin{abstract} 
Let $f(n)$ be the sum of the prime divisors of $n$, counted with multiplicity; thus $f(2020)$ $= f(2^2 \cdot 5 \cdot 101) = 110$. \textit{Ruth-Aaron} numbers, or integers $n$ with $f(n)=f(n+1)$, have been an interest of many number theorists since the famous 1974 baseball game gave them the elegant name after two baseball stars. Many of their properties were first discussed by Erd\Horig{o}s and Pomerance in 1978. In this paper, we generalize their results in two directions: by raising prime factors to a power and allowing a small difference between $f(n)$ and $f(n+1)$. We prove that the number of integers up to $x$ with $f_r(n)=f_r(n+1)$ is $O\left(\frac{x(\log\log x)^3\log\log\log x}{(\log x)^2}\right)$, where $f_r(n)$ is the \textit{Ruth-Aaron} function replacing each prime factor with its $r-$th power. We also prove that the density of $n$ remains $0$ if $|f_r(n)-f_r(n+1)|\leq k(x)$, where $k(x)$ is a function of $x$ with relatively low rate of growth. Moreover, we further the discussion of the infinitude of \textit{Ruth-Aaron} numbers and provide a few possible directions for future study.
\end{abstract}
\maketitle
\tableofcontents
\newpage


\section{Introduction}\label{sec:intro}

On April 8, 1974, Henry Aaron hit his $715$th major league home run, sending him past Babe Ruth, who had a $714$, on the baseball's all-time list. As the event received much advance publicity, the numbers $714$ and $715$ were mentioned by millions of people including mathematicians at the time, whose attention likely deviated from the phenomenal baseball game and was attracted by the beautiful properties of the two consecutive numbers. \\
\indent They first noticed that
\begin{align}
    714\cdot 715\ &=\ 510510\ =\ 2\cdot 3\cdot 5\cdot 7\cdot 11\cdot 13\cdot 17\ =\ P_7\nonumber,
\end{align}
where $P_k$ denotes the product of the first $k$ primes. Without too much effort, we can find expressions for $P_1,P_2,P_3,P_4$ as the product of two consecutive integers. However, after $714$ and $715$, no more products turned up for integer pairs below $10^{6021}$. They thus conjectured that $714$ and $715$ are the largest consecutive integer pair to be written as the product of the first $k$ primes.\\
\indent This conjecture is just the beginning of the beauty of the two integers. In fact, let the unique prime factorization of an integer $n$ be $p_1^{a_1}p_2^{a_2}\cdots p_k^{a_k}$, and define \begin{equation} f(n)\ :=\ a_1p_1+a_2p_2+\cdots +a_kp_k.\end{equation} Nelson, Penney, and Pomerance \cite{NePePo} found that $f(714)=f(715)$. \\
\indent We call the function $f(n)$ the \textit{Ruth-Aaron} function, an integer $n$ with the property $f(n)=f(n+1)$ a \textit{Ruth-Aaron} number, and the pair $n$, $n+1$ a \textit{Ruth-Aaron} pair. A function $f$ is completely additive if $f(ab)=f(a)f(b)$ holds for all integers $a,b$. It easily follows that the \textit{Ruth-Aaron} function has the nice property of being completely additive. A computer search for all the \textit{Ruth-Aaron} numbers not exceeding $50000$ found just $42$ values, suggesting that their density is $0$.
\renewcommand{\arraystretch}{1.2}
\begin{table}[H]
\centering
\begin{tabular}{cccccc}
\hline
    $n$ & $f(n)=f(n+1)$  & $n$  & $f(n)=f(n+1)$  & $n$  & $f(n)=f(n+1)$ \\
    \hline
    5 & 5 & 5405  & 75  & 26642  &193 \\
     8& 6 &  5560 & 150&26649  &66 \\
      15& 8 & 5959  &  160 & 28448  &144 \\
   77   & 18& 6867  & 122 &28809  &117 \\
     125 &15  & 8280 & 40   & 33019  &149 \\
    714   & 29 & 8463  & 54  & 37828  &211 \\
    948   & 86 & 10647  & 39  & 37881  &93 \\
    1330   & 33 & 12351  & 205  &  41261 &64 \\
    1520   & 32 & 14587  & 532  & 42624  &57 \\
    1862   & 35 & 16932  & 107  & 43215  &118 \\
     2491  & 100 & 17080  & 79  &44831   &480 \\
    3248   & 44 & 18490  &  93 & 44891  &82 \\
    4185   & 45 & 20450  &  421 &47544   &299 \\
     4191 & 141 & 24895  & 401  & 49240  &1242 \\
      \hline
\end{tabular}
\caption{\textit{Ruth-Aaron} numbers not exceeding $50,000$.}
\end{table}
In fact, in 1978, only a few years after the famous baseball game, Erd\Horig{o}s and Pomerance \cite{ErPom} proved this result of density $0$. They also established that when $x$ is sufficiently large, the number of \textit{Ruth-Aaron} numbers is at most $C\cdot \frac{x}{(\log x)^{1-\epsilon}}$ for every $0<\epsilon<1$, where $C=C(\epsilon)$ is a constant dependent upon $\epsilon$.\\
\indent In this paper, we extend the results obtained by Erd\Horig{o}s and Pomerance. As an arithmetic function bearing certain similarities to the Sigma function and the Prime Omega function (See Appendix \ref{app:arith}), $f(n)$ renders several natural directions for generalization, one of which is to raise the prime factors to a power. Hence, we first introduce the $r-$th power \textit{Ruth-Aaron} numbers.

\begin{defn}
An integer $n=\prod_{i=1}^kp_i^{a_i}$ is a $r-$th power \textit{Ruth-Aaron} number if
\begin{align}
    f_r(n)\ &=\ f_r(n+1),
\end{align}
where $f_r(n)=\sum_{i=1}^ka_ip_i^r$.
\end{defn}

We prove an upper bound on the number of $r-$th power \textit{Ruth-Aaron} numbers up to $x$ in Section \ref{sec:non} (we will state the result later in the introduction). Our result improved that of Erd\Horig{o}s and Pomerance by a factor of $(\log\log x)^3\log\log\log x/\log x$. Moreover, inspired by Cohen, Cordwell, Epstein, Kwan, Lott, and Miller's study of near perfect numbers \cite{CCEKLM}, we introduce the concept of $k(x)$\textit{-near-Ruth-Aaron} numbers.

\begin{defn}
An integer $n$ is $k(x)$\textit{-near-Ruth-Aaron} when
\begin{align}
    |f_r(n)-f_r(n+1)|\ &\leq\ k(x).
\end{align}
Obviously, when $k(x)=0$, $n$ is a $r-$th power \textit{Ruth-Aaron} number.
\end{defn}

As \textit{Ruth-Aaron} numbers \emph{seem} extremely rare, 
we weaken the condition and investigate how an absolute difference of a small amount between $f_r(n)$ and $f_r(n+1)$ affect the density in Section \ref{sec:rad}; in particular, rather than requiring $f_r(n)$ to equal $f_r(n+1)$, we merely require them to be ``close.'' Moreover, Nelson, Penney, and Pomerance \cite{NePePo} proved the infinitude of \textit{Ruth-Aaron} numbers under Schinzel's Conjecture, which provides us another direction for generalization. In Section \ref{sec:infn}, we prove that there are infinitely many real $r$ such that there is a $r-$th power \textit{Ruth-Aaron} number.\\
\indent As our results are concerning only upper bounds for these numbers, we can and do absorb any set of numbers in the arguments below that is small relative to this bound into our error terms.  \\
\indent For future study, we can place \textit{Ruth-Aaron} numbers in linear equations such as the Fibonacci sequence. We can initiate the study of \textit{Rabonacci} numbers, or integer $n$ with $f(n)=f(n-1)+f(n-2)$. Another possibility is to expand the equation $f(n)=f(n+1)$ to $k-$tuples. Inspired by Erd\Horig{o}s \cite{Er}, we conjecture that for every integer $k\geq 1$, there exists $n,n+1,\dots,n+k$ such that 
\begin{align}\label{ktuple}
    f(n)\ =\ f(n+1)\ &=\ \cdots \ =\ f(n+k).
\end{align}
In fact, a computer search \cite{Nom} tells us that even when $k=2$, solutions are extremely rare. 
\renewcommand{\arraystretch}{1.2}
\begin{table}[H]
\begin{center}
\begin{tabular}{cc}
\hline
  $n$  & $f(n)=f(n+1)=f(n+2)$\\
  \hline
  $417\ 162 $  & $533$\\
 $6\ 913\ 943\ 284$ & $5428$\\
  \hline
\end{tabular}
\caption{List of \textit{Ruth-Aaron} triples below $10^{10}$.}
\end{center}
\end{table}
Due to the rarity of \textit{Ruth-Aaron} triples, following the previous conjecture, we propose that the number of solutions to \eqref{ktuple} is finite for any $k \ge 2$. \\
\indent Although \textit{Ruth-Aaron} numbers are named after two baseball stars (instead of a mathematician like most functions and theorems are) and thus have a more or less recreational origin, their study leads us to a variety of great mathematics, which all play key roles in this paper: 
\begin{itemize}

\item the sieve method \cite{HalRi}, 

\item the Prime Number Theorem \cite{Wei},
 
\item the Chinese Remainder Theorem \cite{DiPeiSal}, 

\item De Bruijn's estimation on integers with regards to the size of their largest prime factors \cite{Bru}, 

\item the Catalan Conjecture \cite{Rib}, 

\item the linear independence of radicals, and 

\item inequalities such as Cauchy-Schwarz and Jensen's. 

\end{itemize}

The \textit{Ruth-Aaron} numbers' profound connection to such mathematics not only once again proves the beauty of their properties, but should also justify why they merit a closer inspection.

\subsection{Notations and Definitions}
We set some notation and define the key objects of this paper.

\begin{defn}
Numbers $f_1,f_2,\dots, f_n$ are linearly independent over $\mathbb{Z}$ if, when coefficients $a_i\in\mathbb{Z}$, the following equation
\begin{align}
    a_1f_1+a_2f_2+\cdots+a_nf_n\ &=\ 0
\end{align}
holds only when $a_i=0$.
\end{defn}

We use the following notations.
\begin{itemize}
    \item We adopt a notation similar to the $\mathcal{A}_{\mathbf{a}}$ used by Luca and Stănică  \cite{LuSta}, to denote linear equations with the \textit{Ruth-Aaron} function.\\
    Let $k\geq 1$ be a positive integer, and let $\mathbf{a}=(a_0,a_1,\dots,a_k)$ be a vector with integer components not all zero. Put $\mathcal{R}^r_{\mathbf{a}}(x)$ for the set of all integers $n$ not exceeding $x$ such that
    \begin{align}
        \sum_{i=0}^ka_if_r(n+i)\ &=\ 0.
    \end{align}
    Then it's not hard to notice that all integers $n$ up to $x$ with $f_r(n)=f_r(n+1)$ coincide with $\mathcal{R}^r_{\mathbf{(1,-1)}}(x)$.
    \item We use $P(n)$ to denote the largest prime factor of integer $n$.
    \item We use $A(x,t)$ to denote the number of $n\leq x$ with $P(n)\geq x^t$, and $a(x,t)$ to denote the fraction of $n\leq x$ with $P(n)\geq x^t$.
    \item We use $\Psi(x,y)$ to denote the number of $n$ up to $x$ with prime factors no greater than $y$.
    \item Big $O$ notation: We write $k(x) = O(g(x))$ or $k(x) \ll g(x)$ if there exists a positive constant $C$ such that $k(x) < C\cdot g(x)$ for all sufficiently large $x$.
    \item Little $o$ notation: We write $k(x) = o(g(x))$ if $\lim_{x\to\infty} f(x)/g(x) = 0$.
    \item We write $k(x)\sim g(x)$ if $\lim_{x\to\infty}k(x)/g(x)=1$.
    \item We denote by $E$ the constant
    \begin{align}
        E\ &=\ \sum_{k=1}^\infty\frac{\left(\mu(k)\right)^2}{k\varphi(k)}\ =\ \frac{\zeta(2)\zeta(3)}{\zeta(6)}\nonumber\\
        &=\ 1.9435964368\dots,
    \end{align}
    where $\zeta(x)$ is the Riemann Zeta function, $\mu(k)$ the M\"obius function, and $\varphi(x)$ the Euler Totient function.
\end{itemize}

\begin{rek}
Our estimations of the number of $n$ at most $x$ that satisfy certain conditions are mostly expressed as $O(g(x))$, where $g(x)$ is a function of $x$. At the expense of tedious constant chasing we could make all the multiplicative constants explicit, but as we are concerned with the decay rate with respect to $x$ there is no need to do so, and we choose big $O$ notation for the sake of readability. Many approximations and scalings presented are not appear optimal, as there is no need since they smaller than our main term.
\end{rek}

\subsection{Main Results}
We obtained the following main results as well as a few others which are not listed below but are introduced later with the proofs of the theorems. These results will be proved using lemmas from Section \ref{sec:pre} and important theorems and conjectures in Appendix \ref{app:important}.

\begin{restatable}[]{thm}{differencethm}\label{thm:ruthaarondifference}
For real $r\geq 1$ and every $\epsilon$ with $0<\epsilon<1$, let $\delta_0=\delta_0(\epsilon)$ be a constant dependent upon $\epsilon$. Let $\delta$ be subject to the following conditions: 
\begin{align}\label{allconditionsdelta}
    \delta\ &\leq\ \delta_0r\epsilon/14\nonumber\\
    0\ <\ \delta\ &<\ \delta^2_0\epsilon/4E^2A\nonumber\\
    \delta\ &<\ \delta_0/4,
\end{align}
where $A$ (see \cite{ErPom}) is a fixed constant around $8$. Then $k(x)$-\textit{near-Ruth-Aaron} numbers have density $0$ for any $k(x)$ such that
\begin{align}
    k(x)\ &\leq\ (x^{r\delta}-x^{-\delta}-1)x^{r/\log\log x}.
\end{align}
\end{restatable}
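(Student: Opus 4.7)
The plan is to adapt the Erd\Horig{o}s--Pomerance density-zero argument to the two simultaneous generalizations at hand: the $r$-th power function $f_r$, and an allowed slack of $k(x)$ in place of equality. First, I would fix $y = x^{1/\log\log x}$ and discard the integers $n \le x$ for which $P(n)\le y$ or $P(n+1) \le y$, or for which the largest prime factor of $n$ or of $n+1$ appears to a higher power. By de Bruijn's bound on $\Psi(x,y)$ together with a standard estimate on powerful numbers, this exclusion contributes only $o(x)$, and the factor $x^{r/\log\log x}$ in the permitted $k(x)$ is precisely what lets this de Bruijn-level threshold mesh cleanly with the sieve estimate below.

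For the remaining $n$, write $n = aP$ with $P = P(n) > y$ appearing to the first power and $a < x/y$, and similarly $n+1 = bQ$; since $bQ - aP = 1$, necessarily $\gcd(a,b) = 1$. The near condition $|f_r(n) - f_r(n+1)|\le k(x)$ rearranges to
\[
|P^r - Q^r| \ \le\ k(x) + |f_r(a) - f_r(b)| \ \le\ k(x) + a^r + b^r,
\]
where the inequality $f_r(m) \le f(m)\cdot P(m)^{r-1} \le m^r$ is used. By the mean value theorem applied to $t\mapsto t^r$, and using the hypothesis on $k(x)$ together with Conditions~\eqref{allconditionsdelta}, this translates into $|P-Q| \ll x^{\delta}$. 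Counting now proceeds in the standard way: for each coprime pair $(a,b)$ the pair $(P,Q)$ is governed by the linear form $bQ = aP+1$ together with a short-interval constraint of length $\ll x^{\delta}$ on $P$; an upper-bound sieve of Halberstam--Richert or Brun--Titchmarsh type then gives at most $O\!\left(x^{\delta}/(a\,\phi(b)\log^2 x)\right)$ admissible $P$ per $(a,b)$, and summing over $a, b < x/y$ using $\sum_{b \le T} 1/\phi(b) = E\log T + O(1)$ yields a total of $o(x)$.

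The main obstacle is the balancing of parameters: the three inequalities defining $\delta$ in \eqref{allconditionsdelta} are exactly what is needed to keep the resulting sieve sum convergent and the short-interval saving nontrivial. The condition $\delta \le \delta_0 r\epsilon/14$ guarantees that confining $P$ to a window of length $x^\delta$ yields a genuine prime-counting saving over the trivial interval of length $x/a$; the condition $\delta < \delta_0^2 \epsilon / (4E^2 A)$ controls the double sum $\sum_{a,b} 1/(a\,\phi(b))$ (this is where the constants $E$ and $A$ from Erd\Horig{o}s--Pomerance enter); and $\delta < \delta_0/4$ prevents the error term $a^r + b^r$ from dominating the slack $k(x)$. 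The formula $(x^{r\delta} - x^{-\delta} - 1)\,x^{r/\log\log x}$ for the allowed $k(x)$ is precisely calibrated so that $|P^r - Q^r| \le k(x) + a^r + b^r$ yields $|P-Q| \ll x^\delta$ in the generic regime, with the subtracted $x^{-\delta}+1$ and the $x^{r/\log\log x}$ factor respectively absorbing the lower-order terms from the mean value expansion and the de Bruijn cutoff. I expect the delicate constant-chasing needed to make the sieve bound just barely $o(x)$, and the boundary case $r=1$ where the mean value conversion nearly degenerates, to be the technical crux of the argument.
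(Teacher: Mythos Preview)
Your approach differs substantially from the paper's, and it contains a gap at the crucial conversion step.

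The paper does \emph{not} assume $|f_r(n)-f_r(n+1)|\le k(x)$ and count. Instead it proves the contrapositive statement ``for all but $O(\epsilon x)$ integers $n\le x$, the difference $|f_r(n)-f_r(n+1)|$ exceeds $(x^{r\delta}-x^{-\delta}-1)x^{r/\log\log x}$'' by combining two separate lemmas. Lemma~\ref{thm:xdeltapn} (Erd\Horig{o}s--Pomerance) shows that $P(n)/P(n+1)\notin(x^{-\delta},x^{\delta})$ for all but $\epsilon x$ values of $n$; this is where the sieve with the constants $E$ and $A$ lives, and it is here that the conditions $\delta<\delta_0^2\epsilon/(4E^2A)$ and $\delta<\delta_0/4$ arise. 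Lemma~\ref{thm:1minusepsilon} shows that $P(n)^r\le f_r(n)<(1+x^{-\delta})P(n)^r$ for all but $\epsilon x$ values of $n$; this is where $\delta\le\delta_0 r\epsilon/14$ enters. The theorem then follows by three lines of algebra: if $P(n)>P(n+1)$ and both lemmas apply, then
\[
f_r(n)-f_r(n+1)\ >\ P(n)^r-(1+x^{-\delta})P(n+1)^r\ >\ (x^{r\delta}-x^{-\delta}-1)P(n+1)^r\ >\ (x^{r\delta}-x^{-\delta}-1)x^{r/\log\log x}.
\]
So the roles you assign to the three $\delta$-inequalities are not the ones they actually play.

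The gap in your argument is the line ``this translates into $|P-Q|\ll x^{\delta}$.'' You bound $|P^r-Q^r|\le k(x)+a^r+b^r$, but you have only $a,b<x/y=x^{1-1/\log\log x}$, so $a^r+b^r$ can be of order $x^{r(1-1/\log\log x)}$, which utterly dominates $k(x)\approx x^{r\delta+r/\log\log x}$. Dividing by $r\xi^{r-1}$ with $\xi\ge y$ only gives $|P-Q|\lesssim x^{r}/y^{r-1}$, a vacuous bound. The condition $\delta<\delta_0/4$ does nothing to save this: it cannot force $a^r+b^r$ below $k(x)$ since $\delta_0\le 1/4$. To rescue your approach you would first have to discard those $n$ for which $f_r(a)=f_r(n/P(n))$ is comparable to $P(n)^r$ --- but that is exactly the content of Lemma~\ref{thm:1minusepsilon}, and once you invoke it you are back to the paper's two-lemma decomposition rather than a direct sieve on the near-Ruth-Aaron condition.
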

This result indicates that when $x$ is sufficiently large, if the difference between $f_r(n)$ and $f_r(n+1)$ is essentially less than $x^{\delta'}$, where $\delta$ is arbitrarily small, then the density of $n$ under this condition is still $0$. Hence, not only are \textit{Ruth-Aaron} numbers rare, $k(x)$-\textit{near-Ruth-Aaron} numbers are also rare for $k(x)$ at most a small power of $x$. In particular, if $k(x)$ is a constant or a power of logarithm, $k(x)$\textit{-near-Ruth-Aaron} numbers are likewise very rare.\\
\indent Moreover, recall that $\#\mathcal{R}_{(1,-1)}^r(x)$ denotes the number of integers up to $x$ with $f_r(n)=f_r(n+1)$. We are able to obtain the following new results of $r-$th power \textit{Ruth-Aaron} number:
\begin{subtheorem}{thm}\label{thm:generalizednumberofn}
\begin{restatable}[]{thm}{negativer}
     When $r=-1$,
    \begin{align}
        \#\mathcal{R}^{-1}_{(1,-1)}(x)\ &\ll\ x^{2(\log\log x/\log x)^{1/2}}\ =\ \exp\left(2(\log\log x\log x)^{1/2}\right),
    \end{align}
    which means for every $\epsilon>0$, we can find $x$ sufficiently large such that $\#\mathcal{R}_{(1,-1)}^r(x)\ll x^{\delta}$. 
    In fact, when $r$ is negative 
    \begin{align}
         \#\mathcal{R}^{r}_{(1,-1)}(x)\ &\ll\ x^{O((\log\log x/\log x)^{r/r-1})}.
    \end{align}
    \end{restatable}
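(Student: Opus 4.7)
The key observation is that for $r=-1$, the value $f_{-1}(n)=\sum_{p\mid n}v_p(n)/p$ is a rational whose denominator in lowest terms divides $\mathrm{rad}(n)=\prod_{p\mid n}p$. Since $\gcd(n,n+1)=1$ implies $\gcd(\mathrm{rad}(n),\mathrm{rad}(n+1))=1$, any common value of $f_{-1}(n)$ and $f_{-1}(n+1)$ must lie in $\mathbb{Z}[1/\mathrm{rad}(n)]\cap\mathbb{Z}[1/\mathrm{rad}(n+1)]=\mathbb{Z}$. Hence the equation $f_{-1}(n)=f_{-1}(n+1)$ already forces both sides to be an honest integer---a rigid condition that reduces the problem to counting a very thin set.

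Translating integrality into multiplicative structure, suppose $f_{-1}(n)=k\in\mathbb{Z}$. Clearing denominators gives $\sum_{p\mid n}v_p(n)\cdot\mathrm{rad}(n)/p=k\cdot\mathrm{rad}(n)$. Reducing modulo any prime $q\mid n$, every term with $p\neq q$ vanishes (since $q$ divides $\mathrm{rad}(n)/p$), leaving $v_q(n)\cdot(\mathrm{rad}(n)/q)$, and $\mathrm{rad}(n)/q$ is invertible modulo $q$. Since the right side is divisible by $q$, we deduce $q\mid v_q(n)$, so $v_q(n)\geq q$, i.e.\ $q^q\mid n$ for every prime $q\mid n$.

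It now suffices to bound $N(x):=\#\{n\leq x:q^q\mid n\text{ for every }q\mid n\}$. Writing $n=\prod_{p\in S}p^{a_p}$ with $a_p\geq p$ and taking logarithms yields $\sum_{p\in S}p\log p\leq\log x$. By the Chebyshev-type estimate $\sum_{p\leq P}p\log p\sim P^2/2$, only primes up to $(1+o(1))\sqrt{2\log x}$ can appear in any valid $n$. A crude product bound on the admissible exponent vectors gives
\[
N(x)\;\leq\;\prod_{p\leq\sqrt{2\log x}}\bigl(1+\log_p x\bigr)\;\leq\;\exp\!\bigl(O(\sqrt{\log x\log\log x})\bigr),
\]
so $\#\mathcal{R}^{-1}_{(1,-1)}(x)\leq N(x)\ll x^{2(\log\log x/\log x)^{1/2}}$, which is the stated bound.

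For general negative integer $r=-k$, the same strategy applies: the denominator of $f_{-k}(n)$ divides $\mathrm{rad}(n)^k$, still coprime to $\mathrm{rad}(n+1)^k$, so the common value lies in $\mathbb{Z}$; the mod-$q^k$ analogue of the argument above forces $p^k\mid v_p(n)$, hence $p^{p^k}\mid n$ for every $p\mid n$. Inserting $\sum p^k\log p\leq\log x$ into the counting step gives $P(n)\leq O((\log x)^{1/(k+1)})$ and an upper bound with exponent $k/(k+1)=r/(r-1)$, as claimed. The main obstacle is not the denominator trick itself---which is the crux of the argument---but calibrating the counting step precisely enough to recover the stated exponent; the conservative product bound above already beats the theorem's target, so the constants there are not delicate.
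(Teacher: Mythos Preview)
Your reduction to the divisibility condition $p^{p}\mid n$ for every prime $p\mid n$ (and $p^{p^{k}}\mid n$ when $r=-k$) is correct and is exactly what the paper proves, phrased a bit more cleanly through the observation $\mathbb{Z}[1/\mathrm{rad}(n)]\cap\mathbb{Z}[1/\mathrm{rad}(n+1)]=\mathbb{Z}$.

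The counting step, however, contains a genuine error. From $\sum_{p\in S}p\log p\le\log x$ you infer $\max S\le(1+o(1))\sqrt{2\log x}$ by invoking $\sum_{p\le P}p\log p\sim P^{2}/2$. That Chebyshev-type estimate applies only when $S$ is the \emph{full} set of primes up to $P$; for an arbitrary $S$ the only conclusion from a single term is $p\log p\le\log x$, so $p\le(1+o(1))\log x/\log\log x$. A concrete counterexample is $n=p^{p}$ with $p$ near $\log x/\log\log x$: this $n$ is admissible yet $P(n)\gg\sqrt{\log x}$. With the correct range $p\lesssim\log x/\log\log x$ inserted into your product $\prod_{p}(1+\log_{p}x)$, the logarithm is of order $\pi(\log x/\log\log x)\cdot\log\log x\asymp\log x/\log\log x$, which is much larger than the target $2\sqrt{\log x\,\log\log x}$; so the crude product bound no longer reaches the exponent in the theorem.

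The paper avoids this by a splitting trick you are missing. Fix a parameter $t$, let $p_{1}<p_{2}<\cdots$ be the odd primes, and write $n=\bigl(\prod_{i\le t}p_{i}^{a_{i}}\bigr)\bigl(\prod_{i>t}p_{i}^{a_{i}}\bigr)$. The first factor costs at most $(\log x)^{t}$ exponent choices. For the second, one uses $p_{i}>t$ for $i>t$ together with $p_{i}\mid a_{i}$ to deduce that the integer $m=\prod_{i>t}p_{i}^{a_{i}/p_{i}}$ satisfies $m<x^{1/t}$ and determines $\prod_{i>t}p_{i}^{a_{i}}$ injectively; hence at most $x^{1/t}$ choices for the tail. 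Optimizing $(\log x)^{t}\cdot x^{1/t}$ at $t=(\log x/\log\log x)^{1/2}$ yields exactly $\exp\bigl(2\sqrt{\log x\,\log\log x}\bigr)$. The same splitting, with $p_{i}^{k}\mid a_{i}$ and $p_{i}^{k}>t^{k}$ for $i>t$, gives the exponent $k/(k+1)=r/(r-1)$ for general negative integers $r=-k$. Your denominator argument is the right crux; the counting needs this extra injection-into-$x^{1/t}$ idea to land the stated bound.
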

    \begin{restatable}[]{thm}{rationalr}
When $r$ is rational but not an integer,
    \begin{align}
        \#\mathcal{R}^r_{(1,-1)}(x)\ &=\ 0.
    \end{align}
    \end{restatable}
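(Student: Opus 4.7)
The plan is to exploit the linear independence of radicals, one of the key tools listed in the introduction. Write $r = p/q$ in lowest terms with $q \geq 2$, let $a = \lfloor r \rfloor$, and set $s = p - aq$. Since $\gcd(p,q) = 1$ and $q \geq 2$, one checks that $s \in \{1,\dots,q-1\}$ and $\gcd(s,q) = 1$, so for every prime $\ell$ we can decompose
\[ \ell^{r} \;=\; \ell^{a}\,\ell^{s/q}, \]
in which $\ell^{a}$ is a nonzero rational and $\ell^{s/q}$ is an irrational $q$-th radical.

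Next I would reduce the hypothesized equation $f_r(n) = f_r(n+1)$ to a $\mathbb{Q}$-linear relation among these radicals. Since $\gcd(n,n+1) = 1$, the primes dividing $n$ and those dividing $n+1$ form disjoint sets; writing $n = \prod_{i} p_i^{a_i}$ and $n+1 = \prod_{j} q_j^{b_j}$, the hypothetical equality becomes
\[ \sum_{i}\bigl(a_i\,p_i^{a}\bigr)\,p_i^{s/q}\;-\;\sum_{j}\bigl(b_j\,q_j^{a}\bigr)\,q_j^{s/q}\;=\;0, \]
whose coefficients are all nonzero rationals. The key step is to invoke the standard Kummer-type fact that for any finite list of distinct primes $\ell_1,\dots,\ell_m$ and any fixed exponent $s/q$ with $0 < s < q$ and $\gcd(s,q) = 1$, the values $\ell_1^{s/q},\dots,\ell_m^{s/q}$ are linearly independent over $\mathbb{Q}$. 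This is because the monomials $\ell_1^{e_1/q}\cdots \ell_m^{e_m/q}$ with $0 \leq e_i < q$ form a $\mathbb{Q}$-basis of the degree-$q^m$ Kummer extension $\mathbb{Q}(\ell_1^{1/q},\dots,\ell_m^{1/q})$, and the elements $\ell_k^{s/q}$ are distinct members of this basis.

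Applying this independence to the displayed relation forces every coefficient to vanish, contradicting $a_i,b_j \geq 1$. The edge case $n = 1$ is handled by hand, since $f_r(1) = 0$ while $f_r(2) = 2^{r} \neq 0$. Combining the two cases yields $\#\mathcal{R}^{r}_{(1,-1)}(x) = 0$ for every $x$. The only real obstacle is cleanly invoking the Kummer-independence statement; once that is granted, the rest is pure bookkeeping, and if desired one can give a self-contained proof by induction on $m$ using the fact that $[\mathbb{Q}(\ell^{1/q}):\mathbb{Q}] = q$ for each prime $\ell$ not yet adjoined to the field.
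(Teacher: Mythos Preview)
Your proof is correct and follows essentially the same strategy as the paper: both reduce the equation $f_r(n)=f_r(n+1)$ to a vanishing $\mathbb{Q}$-linear combination of distinct prime radicals and then invoke their linear independence (the paper cites Boreico's theorem, you invoke Kummer theory directly). Your explicit extraction of the integer part $a=\lfloor r\rfloor$, so that the radicands $\ell^{s}$ with $0<s<q$ are genuinely $q$-th-power-free, is in fact a small refinement: the paper applies Boreico's result to $n_i=p_i^{x}$ with $r=x/y$, but when $x>y$ these $n_i$ are not literally $y$-th-power-free, so a reduction step like yours is implicitly needed there as well.
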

    \begin{restatable}[]{thm}{rgeqone}
    When $r\geq 1$ is real,
    \begin{align}
    \#\mathcal{R}^r_{(1,-1)}(x)\ = \ O\left(\frac{x\log\log\log x(\log\log x)^3}{(\log x)^2}\right).
\end{align}
\end{restatable}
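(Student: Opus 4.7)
The plan is to refine the Erdős--Pomerance approach \cite{ErPom}, keeping its two-step structure (first extract large prime factors of $n$ and $n+1$, then sieve) but squeezing sharper logarithmic savings out of each step.

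Choose a parameter $y = y(x)$, to be tuned at the end. Split $n \leq x$ into a \emph{smooth} part ($P(n) \leq y$ or $P(n+1) \leq y$), which by de Bruijn contributes at most $2\Psi(x, y)$, and a \emph{rough} part, where both $P(n)$ and $P(n+1)$ exceed $y$. Discard the negligible set where $P(n)^2 \mid n$ or $P(n+1)^2 \mid (n+1)$ (size $O(x/y)$), and also the set where $\gcd(n/P(n), (n+1)/P(n+1)) > 1$ (size $O(x/y^2)$). For the rest, write $n = pm$ and $n+1 = qm'$ with $p = P(n)$, $q = P(n+1)$ primes exceeding $y$ and $m, m' \leq x/y$ coprime.

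The condition $f_r(n) = f_r(n+1)$ becomes
\[
p^r - q^r \;=\; f_r(m') - f_r(m),
\]
while the identity $pm + 1 = qm'$ forces $p \equiv -\overline m \pmod{m'}$. For each pair $(m, m')$ we are thus counting primes $p \leq x/m$ in a fixed arithmetic progression modulo $m'$ for which $q = (pm+1)/m'$ is also prime. The binary Brun sieve (cf.\ Halberstam--Richert \cite{HalRi}) yields
\[
    \#\{\text{admissible}\ p\} \;\ll\; \frac{x/m}{\varphi(m')\,(\log(x/m))^{2}}\,\mathfrak{S}(m, m'),
\]
with $\mathfrak{S}(m, m')$ a standard singular-series product over primes dividing $mm'(mm'+1)$. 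Summing over pairs $(m, m')$ with $m, m' \leq x/y$ and invoking
\[
    \sum_{m \leq M} \frac{1}{\varphi(m)} \;=\; E \log M + O(1),
\]
(with $E$ the constant of the introduction) together with the average behaviour of $\mathfrak S$---which contributes a further $\log\log\log x$ from the mean of $n/\varphi(n)$-type products---produces a rough-part bound of order
\[
    \frac{x\,(\log(x/y))^{3}\,\log\log\log x}{(\log x)^{2}}.
\]
Choosing $y$ so that $\log(x/y) \asymp \log\log x$ (equivalently, $y = x/(\log x)^{O(1)}$) makes this exactly match the claimed bound. For such $y$, $\Psi(x,y)$ is not itself small, so the smooth class is handled by rerunning the same sieve using the next-largest prime factor of $n$, which for almost every $n$ exists in a moderate range by Mertens' theorem; its contribution can then be absorbed into the main bound.

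The main obstacle will be the precise average of the singular series $\mathfrak{S}(m,m')$ against $1/\varphi(m')$: a crude bound produces a redundant factor of $\log\log x$, while the target $\log\log\log x$ requires exploiting the fact that the pairs $(m, m')$ where $\mathfrak S$ is unusually large are sparse. Combining this precise averaging with the two-tiered treatment of the smooth class is where the bulk of the technical work concentrates.
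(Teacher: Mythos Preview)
Your sieve step is applied to the wrong quantity. For a fixed pair $(m,m')$ you bound the number of primes $p$ in a progression modulo $m'$ with $(pm+1)/m'$ also prime, but this count never uses the Ruth--Aaron equation $p^{r}-q^{r}=f_r(m')-f_r(m)$. What you are actually bounding, after summing over $m,m'\leq x/y$, is $\#\{n\leq x: P(n),P(n+1)>y\}$ --- a set that is indeed of size $O\bigl(x(\log\log x)^{2}/(\log x)^{2}\bigr)$ when $y=x/(\log x)^{O(1)}$, but for reasons having nothing to do with $f_r$. In fact the two relations $pm+1=qm'$ and $p^{r}-q^{r}=f_r(m')-f_r(m)$ together determine $p,q$ \emph{uniquely} from $(m,m')$ (a monotonicity argument), so the honest per-pair count is $\leq 1$ and the Brun sieve plays no role at that stage.

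The fatal gap is the smooth class. With $y=x/(\log x)^{O(1)}$ one has $\Psi(x,y)/x\to 1$: almost every integer up to $x$ satisfies $P(n)\leq y$, so your smooth class is essentially all of $[1,x]$. ``Rerunning the same sieve on the next-largest prime factor'' is not a reduction: for a $y$-smooth $n$ the second prime factor is not forced into any usable range, and without invoking $f_r(n)=f_r(n+1)$ somewhere you cannot beat the trivial bound on this class. As written, the proposal never uses the hypothesis.

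The paper's route is structurally different. It takes the threshold $x^{1/\log\log x}$, so that the smooth part is $O(x/(\log x)^{2})$ by de~Bruijn and can be discarded outright. The uniqueness of $(p,q)$ given $(k,m)=(n/p,(n+1)/q)$ then forces $p,q\leq 2^{1/r}x^{1/2}\log x$. The argument next splits on the size of $f_r(k)$: if $f_r(k)<p^{r}/(\log x)^{2r}$ (and similarly for $m$) then $|p-q|$ is tiny and a Brun--Titchmarsh plus CRT count yields $O\bigl(x\log\log x/(\log x)^{2}\bigr)$; otherwise the second-largest prime $t=P(k)$ must lie in $\bigl[p/((\log x)^{2}(\log\log x)^{1/r}),\,p\bigr]$, and a CRT count with the \emph{three} primes $p,t,q$ (via $pt\mid n$, $q\mid n+1$) produces the main term $O\bigl(x(\log\log x)^{3}\log\log\log x/(\log x)^{2}\bigr)$ when $p\leq x^{1/3}$. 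The residual range $x^{1/3}<p<x^{2/5}$ is disposed of with Pomerance's divisor-sum lemma for quadratic polynomials. The extra factors of $\log\log x$ and the $\log\log\log x$ you hoped to manufacture from singular-series averages in fact arise, in the paper, from the short multiplicative windows in which $t$ and $q$ are trapped near $p$.
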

\end{subtheorem}
Erd\Horig{o}s and Pomerance \cite{ErPom} conjectured that there are infinitely many \textit{Ruth-Aaron} numbers. While their conjecture is still open, we prove a related problem, namely the infinitude of $r$ for which $\#\mathcal{R}_{(1,-1)}^r(x)>0$.
\begin{restatable}[]{thm}{infinitude}\label{thm:infinitudeofr>0}
There are infinitely many real numbers $r$ such that $\#\mathcal{R}_{(1,-1)}^r(x)>0$.
\end{restatable}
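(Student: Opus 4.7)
The plan is to use the intermediate value theorem to produce, for each odd prime $p$, a positive real $r_p$ witnessing $n=p$ as an $r_p$-th power Ruth--Aaron number, and then to show that these $r_p$ take infinitely many distinct values by exhibiting an infinite subsequence along which $r_p\to 0$.

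For existence, fix an odd prime $p\geq 3$. Since $p$ is odd, $p+1$ is even and $\geq 4$, hence composite; write $p+1=\prod_i q_i^{a_i}$ and note every $q_i\leq (p+1)/2<p$. Define the real-analytic function
\[
h_p(r)\ :=\ f_r(p)-f_r(p+1)\ =\ p^r-\sum_i a_i q_i^r.
\]
Since $\Omega(p+1)\geq 2$, we have $h_p(0)=1-\Omega(p+1)<0$. As $r\to +\infty$ the term $p^r$ dominates all $q_i^r$ (because $p>q_i$), so $h_p(r)\to+\infty$. The intermediate value theorem then provides some $r_p\in(0,\infty)$ with $h_p(r_p)=0$, so $p$ is an $r_p$-th power Ruth--Aaron number.

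For distinctness, rearranging $h_p(r_p)=0$ gives $p^{r_p}=\sum_i a_i q_i^{r_p}\leq \Omega(p+1)\cdot P(p+1)^{r_p}$, which I would convert into
\[
r_p\ \leq\ \frac{\log \Omega(p+1)}{\log\bigl(p/P(p+1)\bigr)}\ \leq\ \frac{\log\log_2(p+1)}{\log\bigl(p/P(p+1)\bigr)}.
\]
A standard sieve-theoretic result (going back to Erd\Horig{o}s, and refined by Hooley and others in the literature on smooth values of shifted primes) asserts that there exists a fixed $\theta\in(0,1)$ for which a positive density of primes $p$ satisfy $P(p+1)\leq p^\theta$. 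Restricting to this infinite subsequence makes the denominator above at least $(1-\theta)\log p$, hence $r_p\leq \log\log_2(p+1)/\bigl((1-\theta)\log p\bigr)\to 0$ as $p\to\infty$. Since each $r_p>0$, the set $\{r_p\}$ accumulates at $0$ without ever reaching it, so it must contain infinitely many distinct values, proving the theorem.

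The main obstacle is the sieve input: one needs infinitely many primes $p$ for which $P(p+1)$ is appreciably smaller than $p$ on a polynomial scale. Any threshold $\theta<1$ suffices for our argument, and such a density statement is classical---one can take $\theta=3/4$ with an elementary sieve, or invoke sharper thresholds from the literature on smooth shifted primes. This is the same circle of ideas already used elsewhere in the paper (de Bruijn's $\Psi$-estimate and Erd\Horig{o}s-style sieve bounds), specialized here to shifted primes rather than to $2^k+1$, which avoids any reliance on deeper transcendence-theoretic inputs such as Stewart's bound on $P(2^k\pm 1)$.
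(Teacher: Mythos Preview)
Your argument is correct and takes a genuinely different route from the paper's. The paper works with $n=p^2-1$ (so that $n+1=p^2$ and $f_r(n+1)=2p^r$), proves that $g_n(r)=f_r(n+1)-f_r(n)$ is strictly increasing on $(0,\infty)$ via Cauchy--Schwarz and Jensen, and then shows by direct size estimates that for any given $r_0\in(0,1)$ one can choose $p$ large enough to force $g_n(r_0)>0$, which pushes the unique root below $r_0$. This manufactures a strictly decreasing sequence of positive roots with no appeal to results on smooth shifted primes.

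You instead take $n=p$ and trade the monotonicity analysis for an external input: the existence of a fixed $\theta<1$ and infinitely many primes $p$ with $P(p+1)\le p^\theta$. That result is indeed in the literature (Erd\Horig{o}s 1935 for $p-1$, with the same method handling $p+1$; sharper thresholds due to Friedlander, Baker--Harman, and others), so the argument is valid, though calling it ``elementary'' slightly undersells what is needed---a lower-bound sieve for primes in a smooth-number set, not merely a $\Psi(x,y)$ estimate. The trade-off is that your proof is shorter and sidesteps the convexity work entirely (you never need uniqueness of the root, only the universal bound $r_p\le \log\Omega(p+1)/\log(p/P(p+1))$, which holds for \emph{every} root), whereas the paper's proof is self-contained at the cost of more hands-on inequality manipulation. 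Either way the underlying mechanism is the same: exhibit roots $r_n\to 0^+$ and conclude that infinitely many distinct values occur.
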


\subsection{Outline}
In Section \ref{sec:pre} we present a few preliminary results that provide a general overview for the problems we study, and which will be used extensively throughout the paper. Then, we discuss the proofs of Theorems \ref{thm:ruthaarondifference}, \ref{thm:generalizednumberofn}, and \ref{thm:infinitudeofr>0} (Sections \ref{sec:rad},    \ref{sec:non}, and \ref{sec:infn}). We conclude with possible future research directions in Section \ref{sec:fut}.

\section{Preliminary Results}\label{sec:pre}
We begin by generalizing some results from Erd\Horig{o}s and Pomerance \cite{ErPom} which will be useful in proving our main theorems. Lemma \ref{lem:atxinfty} and Corollary \ref{cor: epsilonx3} are introduced for the sake of proving Lemma \ref{thm:xdeltapn}, as two cases in terms of the size of $P(n)$ are taken care of by the corollary. Lemma \ref{lem:eupevandu0pv0} and Lemma \ref{lem:plnp} are frequently used in the sieve method to bound various sums over reciprocals of primes. Lemma \ref{lem:generalizedfnpn} and Lemma \ref{lem:lowerboundofp} introduce an upper bound for $f_r(n)$ with regards to $P(n)^r$ and a lower bound for the size of $P(n)$. These results are used extensively throughout the paper. \\ \indent Erd\Horig{o}s and Pomerance \cite{ErPom} first introduced a well-known result due to Dickman \cite{Di} and others which bounds how often the larges prime factor of $n\leq x$ is at least $n^t$. 
\begin{lem}\label{lem:atxinfty}
 For every $x>0$ and every $t$,  $0 \leq t \leq 1$, let $A(x,t)$ denote the number of $n \leq x$ with $P(n) \geq x^t$. The function
\begin{align}\label{ataxt}
    a(t)\ :=\ \lim_{x\to\infty} x^{-1}A(x,t)
\end{align}
is defined and continuous on $[0,1]$.
\end{lem}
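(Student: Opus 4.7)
The plan is to handle the existence of the limit and its continuity separately, using Dickman's theorem for pointwise existence and Mertens' theorem for a modulus of continuity. Since $A(x,t) = \lfloor x\rfloor - \Psi(x, x^t) + O(1)$, the existence of $a(t)$ for $t \in (0,1]$ is equivalent to the existence of $\lim_{x\to\infty}\Psi(x,x^t)/x$. This is the content of Dickman's classical theorem, which yields $\lim_{x\to\infty}\Psi(x,x^{1/u})/x = \rho(u)$, where $\rho$ is the Dickman function defined by $\rho(u)=1$ on $[0,1]$ and by the differential-difference equation $u\rho'(u) = -\rho(u-1)$ for $u>1$. This gives $a(t) = 1 - \rho(1/t)$ for $t > 0$, while $a(0) = 1$ is immediate from $A(x,0) = \lfloor x\rfloor$.

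For continuity on $(0,1]$, I would argue directly without relying on finer properties of $\rho$. Given $0 < t_1 < t_2 \le 1$, the quantity $A(x,t_1) - A(x,t_2)$ counts integers $n \le x$ with $x^{t_1} \le P(n) < x^{t_2}$. Writing $n = P(n) \cdot m$ bounds this count above by $\sum_{x^{t_1} \le p < x^{t_2}} \lfloor x/p \rfloor$, since each such $n$ is determined by the prime $p = P(n) \in [x^{t_1}, x^{t_2})$ together with $m = n/p \le x/p$. By Mertens' theorem,
\[\sum_{x^{t_1}\le p<x^{t_2}} \frac{1}{p} \;=\; \log\log x^{t_2} - \log\log x^{t_1} + o(1) \;=\; \log(t_2/t_1) + o(1),\]
so dividing through by $x$ and sending $x \to \infty$ gives $0 \le a(t_1) - a(t_2) \le \log(t_2/t_1)$, the lower bound coming from the obvious monotonicity of $a$. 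This Lipschitz-in-log estimate immediately yields uniform continuity on every subinterval $[\epsilon,1]$.

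It remains to check continuity at $t=0$, i.e., $a(t) \to 1$ as $t \to 0^+$, or equivalently $\rho(1/t) \to 0$. This follows from the standard estimate $\rho(u) \le 1/\Gamma(u+1)$, itself provable by induction from the defining recursion. The principal obstacle is invoking Dickman's theorem, which is not at all elementary; everything else reduces to Mertens and a crude bound on $\rho$ at infinity. If one wished to avoid Dickman, existence could be recovered by an induction on the dyadic ranges $t \in (1/(k+1), 1/k]$, using Buchstab-type recursions together with Mertens at each step to deduce asymptotics on $(1/(k+1), 1/k]$ from those on $(1/k, 1/(k-1)]$, but this would considerably lengthen what is otherwise a clean two-step proof appropriate for a preliminary lemma.
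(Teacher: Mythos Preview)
The paper does not actually prove this lemma: it is stated as a ``well-known result due to Dickman \cite{Di} and others'' and used as a black box, so there is no argument to compare against. Your proposal supplies a correct and well-organized proof where the paper simply cites the literature.

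One small technical slip: the relation $A(x,t) = \lfloor x\rfloor - \Psi(x,x^t) + O(1)$ is not quite right as written. When $x^t$ happens to be a prime $p$, the two counts overlap on the set $\{n\le x: P(n)=p\}$, which has size $\Psi(x/p,p)$ and can be much larger than $O(1)$. For fixed $t>0$ this is still $\le x^{1-t}=o(x)$, so the limit calculation is unaffected; you should just write $o(x)$ rather than $O(1)$ there (or note that for all but a null set of $x$ the boundary is empty). The Mertens-based modulus of continuity $0\le a(t_1)-a(t_2)\le \log(t_2/t_1)$ and the use of $\rho(u)\le 1/\Gamma(u+1)$ at $t=0$ are both fine.
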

\begin{cor}\label{cor: epsilonx3}
From Lemma \ref{lem:atxinfty} we obtain that there exists $\delta_0=\delta_0(\epsilon)$ sufficiently small $(0< \delta_0\leq \fof)$ such that for large $x$, the number of $n\leq x$ with
\begin{align}\label{pnxdelta0}
    P(n)\ <\  x^{\delta_0}\ \textrm{or}\ x^{1/2-\delta_0}\ \leq P(n)\ <\  x^{1/2+\delta_0}
\end{align}
is less than $\epsilon x/3$.
\end{cor}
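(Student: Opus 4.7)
The plan is to decompose the count by the two disjoint conditions on $P(n)$ and apply Lemma \ref{lem:atxinfty} to each piece, relying crucially on continuity of the limit function $a(t)$.

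Let $B(x,\delta_0)$ denote the number of $n\leq x$ that satisfy either $P(n)<x^{\delta_0}$ or $x^{1/2-\delta_0}\leq P(n)< x^{1/2+\delta_0}$. Since these two conditions are mutually exclusive (provided $\delta_0<1/2-\delta_0$, which is guaranteed by $\delta_0\leq 1/4$), I can write
\begin{equation}
B(x,\delta_0)\ =\ \bigl[x - A(x,\delta_0)\bigr] + \bigl[A(x,\tfrac12-\delta_0) - A(x,\tfrac12+\delta_0)\bigr].
\end{equation}
Dividing by $x$ and invoking Lemma \ref{lem:atxinfty} (which guarantees that $x^{-1}A(x,t)\to a(t)$), I get
\begin{equation}
\lim_{x\to\infty}\frac{B(x,\delta_0)}{x}\ =\ \bigl(1-a(\delta_0)\bigr) + \bigl(a(\tfrac12-\delta_0)-a(\tfrac12+\delta_0)\bigr).
\end{equation}

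Next I would observe that $a(0)=1$ trivially, since every positive integer satisfies $P(n)\geq 1=x^0$. Combined with the continuity of $a$ on $[0,1]$ (the key content of Lemma \ref{lem:atxinfty}), both quantities
\[
1-a(\delta_0)\quad\text{and}\quad a(\tfrac12-\delta_0)-a(\tfrac12+\delta_0)
\]
tend to $0$ as $\delta_0\to 0^+$: the first by continuity at $0$, and the second by continuity at $1/2$. Therefore, given $\epsilon>0$, I can choose $\delta_0=\delta_0(\epsilon)\in(0,1/4]$ small enough that the sum of these two quantities is strictly less than $\epsilon/4$.

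With $\delta_0$ fixed, the definition of the limit guarantees that for all sufficiently large $x$,
\begin{equation}
\frac{B(x,\delta_0)}{x}\ <\ \bigl(1-a(\delta_0)\bigr)+\bigl(a(\tfrac12-\delta_0)-a(\tfrac12+\delta_0)\bigr)+\tfrac{\epsilon}{12}\ <\ \tfrac{\epsilon}{3},
\end{equation}
yielding the desired bound $B(x,\delta_0)<\epsilon x/3$. The argument is essentially a two-line continuity-and-limit manipulation, so there is no real obstacle; the only point that requires a touch of care is ensuring the two conditions in \eqref{pnxdelta0} do not overlap (handled by demanding $\delta_0\leq 1/4$, which is included in the corollary's statement) and that the approximation error in passing from $A(x,t)/x$ to $a(t)$ is subsumed into the slack $\epsilon/12$. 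The continuity provided by Lemma \ref{lem:atxinfty} does all the real work.
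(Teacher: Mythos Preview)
Your proof is correct and follows essentially the same approach as the paper's: both arguments write the count in terms of $A(x,t)$, use continuity of $a(t)$ at $t=0$ and $t=1/2$ to make the limiting density as small as desired, and then invoke pointwise convergence $x^{-1}A(x,t)\to a(t)$ (with $\delta_0$ already fixed) to pass the bound back to finite $x$. Your write-up is in fact cleaner than the paper's, which juggles separate thresholds $\delta_1,\delta_2,X_1,X_2$ and somewhat arbitrary constants like $\epsilon/8$ and $\epsilon/2020$ to reach the same conclusion.
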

\begin{proof}
Let $\epsilon>0$. From Lemma \ref{lem:atxinfty}, $a(t)=\lim_{x\to\infty}x^{-1}A(x,t)$, which means the fraction of $n\leq x$ such that $P(n)< x^{\delta_0}$ converges to $a(0)-a(\delta_0)$ when $x\to\infty$. Similarly, the fraction of $n$ which satisfy $x^{1/2-\delta_0}\leq P(n)<x^{1/2+\delta_0}$ converges to $a(1/2-\delta_0)-a(1/2+\delta_0)$.\\ \\
As defined, $a(x,t)$ is the fraction of $n\leq x$ with $P(n)\geq  x^t$. Consider $P(n)\leq x^{\delta_0}$ first. We can find $\delta_1$ and $X_1$ such that  $\forall\ \delta_0\leq \delta_1$ and $x\geq  X_1$, we have $a(x,0)-a(x,\delta_0)\leq \epsilon/8$ and within $\epsilon/2020$ of $a(0)-a(\delta_0)$. For the second condition, because $a(t)$ is continuous, given any $\epsilon$ we can always find $\delta_2$ and $X_2$ such that  if $\delta_0$ is at most $\delta_2$ and $x$ is at least $X_2$ then $a(x,1/2-\delta_0)-a(x,1/2+\delta_0)$ is at most $\epsilon/8$ and within $\epsilon/2020$ of $a(1/2-\delta_0)-a(1/2+\delta_0)$. We take $\delta=\min(\delta_1,\delta_2)$ and $X=\max(X_1,X_2)$, then the fraction of $n\leq x$ satisfying one of the two conditions is no greater than $\epsilon/3$, which means the number of $n$ is no greater than $\epsilon x/3$.
\end{proof}

Lemmas \ref{lem:eupevandu0pv0} and \ref{lem:plnp} are used frequently in later sections to bound various sums over reciprocals of primes.

\begin{lem}\label{lem:eupevandu0pv0}
We have
\begin{eqnarray}\label{eqnarray: eupev}
\sum_{e^u\leq p\leq e^v}\frac{1}{p} & \ < \ & \log(v/u)+\frac{C}{u}\nonumber\\
\sum_{u_0\leq p\leq v_0}\frac{1}{p} & \ < \ & \frac{C + \log(v_0/u_0)}{\log u_0}.  \end{eqnarray}
\end{lem}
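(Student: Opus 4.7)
The plan is to derive both inequalities from Mertens' second theorem, which states that $\sum_{p\leq x}\frac{1}{p} = \log\log x + M + O(1/\log x)$ for the Meissel--Mertens constant $M$. This classical asymptotic is the natural input for bounds of this shape, and its error term will dictate how large the constant $C$ must be.

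For the first inequality, I would write
\[
\sum_{e^u \le p \le e^v} \frac{1}{p} \;=\; \sum_{p \le e^v} \frac{1}{p} \;-\; \sum_{p < e^u} \frac{1}{p}
\]
and apply Mertens to each partial sum. The main terms telescope to $\log\log(e^v) - \log\log(e^u) = \log v - \log u = \log(v/u)$. The two error terms are $O(1/\log e^v) = O(1/v)$ and $O(1/\log e^u) = O(1/u)$; since $u \le v$ both are absorbed into a single term of size $C/u$ with $C$ taken to be (say) twice the implicit constant in Mertens.

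For the second inequality, I would reduce directly to the first by the substitution $u = \log u_0$, $v = \log v_0$, so the range $e^u \le p \le e^v$ becomes $u_0 \le p \le v_0$. This immediately yields
\[
\sum_{u_0 \le p \le v_0} \frac{1}{p} \;<\; \log\!\left(\frac{\log v_0}{\log u_0}\right) + \frac{C}{\log u_0}.
\]
It remains to bound $\log(\log v_0/\log u_0)$ by $\log(v_0/u_0)/\log u_0$. For this I would invoke the elementary inequality $\log(1+x) \le x$ for $x \ge 0$: writing $\log v_0/\log u_0 = 1 + (\log v_0 - \log u_0)/\log u_0$ (valid since $v_0 \ge u_0$), one gets $\log(\log v_0/\log u_0) \le (\log v_0 - \log u_0)/\log u_0 = \log(v_0/u_0)/\log u_0$. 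Adding the $C/\log u_0$ term and combining over a common denominator produces the stated bound.

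The main ``obstacle'' is essentially bookkeeping rather than mathematical content: one must ensure that a single constant $C$ (or a slightly enlarged one) works uniformly across the regimes of interest, and that $u$ and $u_0$ lie in the range where Mertens' theorem already has acceptably small error. Because the lemma is only applied later with $u, u_0 \to \infty$, this is harmless and simply amounts to choosing $C$ large enough at the outset.
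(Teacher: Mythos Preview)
Your proposal is correct and follows essentially the same route as the paper: both derive the first inequality from Mertens' estimate $\sum_{p\le x}1/p=\log\log x+M+O(1/\log x)$ by telescoping, then obtain the second by the substitution $u=\log u_0,\ v=\log v_0$ together with the elementary bound $\log t\le t-1$. The only cosmetic difference is that the paper rederives Mertens from scratch (via $\sum_{p\le n}(\log p)/p=\log n+O(1)$ and Abel summation), whereas you invoke it directly.
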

\begin{proof}
We use the Abel Partial Summation Formula:
\begin{align}\label{abelsum}
\sum_{1\leq x\leq n}C(x)f(x)\ =\  C(n)f(n)-\int_1^nC(t)f'(t)dt,
\end{align}
and the Prime Number Theorem (weaker version): if $\pi(n)$ is the number of at most $n$, then
\begin{align}\label{pnumthm}
    \pi(n)\ =\ \frac{n}{\log n}+o\left(\frac{n}{(\log n)^2}\right).
\end{align}

First of all, we prove that \begin{align}\label{lnpp}
    \sum_{p\leq n}\frac{\log p}{p}\ =\  \log n+O(1).
\end{align}
We know that the power of $p$ in $n!$ equals $\lfloor\frac{n}{p}\rfloor+\lfloor\frac{n}{p^2}\rfloor+\lfloor\frac{n}{p^3}\rfloor+\cdots$. Thus, we have
\begin{align}
    \log (n!)\ &=\ \sum_{p\leq n}\log p\left(\left\lfloor\frac{n}{p}\right\rfloor+\left\lfloor\frac{n}{p^2}\right\rfloor+\left\lfloor\frac{n}{p^3}\right\rfloor+\cdots\right)\nonumber\\
    &\leq n\cdot\sum_{p\leq n}\frac{\log p}{p}+\sum_{p\leq n}\log p\left(\frac{n}{p^2}+\frac{n}{p^3}+\cdots\right)\nonumber\\
    &=\ n\cdot\sum_{p\leq n}\frac{\log p}{p}+n\sum_{p\leq n}\frac{\log p}{p(p-1)}.
\end{align}

Since $\sum_{p\leq n} \frac{\log p}{p(p-1)}< \sum_{k=1}^n \frac{\log k}{k(k-1)}$ converges, we have $n\sum_{p\leq n}\frac{\log p}{p(p-1)} = O(n)$. Therefore,
\begin{align}
  \log(n!)\ =\ n\cdot\sum_{p\leq n}\frac{\log p}{p}+O(n).
\end{align}
On the other hand,
\begin{align}
    \log(n!)\ &=\ \sum_{k=1}^n\log k\ =\ \int_1^n\log x \ dx +O(\log n)\nonumber\\
    &=\ n\log n-n+O(\log n)\nonumber\\
    &=\ n\log n+O(n).
\end{align}

Comparing the two results, we obtain \eqref{lnpp}:
\begin{align}
\sum_{p\leq n}\frac{\log p}{p}\ =\ \log n+O(1).
\end{align}

We now use the Abel Partial Summation Formula. Let $f(x) =\frac{1}{\log x}$ and $C(x) =\sum_{p\leq n}\frac{\log p}{p}$, then we have
\begin{align}
    \sum_{p\leq n}\frac{1}{p}\ &=\ \frac{C(n)}{\log n}+\int_1^n\frac{C(t)}{t(\log t)^2}dt\nonumber\\
    &=\ 1+\frac{O(1)}{\log n}+\int_1^n\frac{1}{t\log t}dt+\int_1^n\frac{O(1)}{t(\log t)^2}dt\nonumber\\
    &=\ \log\log n+O\left(\frac{1}{\log n}\right)+1.
\end{align}

Then
\begin{align}
   \sum_{e^u\leq p\leq e^v}\frac{1}{p}\ &=\  \sum_{p\leq e^v}\frac{1}{p}-\sum_{p\leq e^u}\frac{1}{p}+\frac{1}{e^u}\nonumber\\
   &=\ \log(v/u)+O\left(\frac{1}{u}\right)\nonumber\\
   &\leq\ \log (v/u)+\frac{O(1)}{u}.
\end{align}
Therefore, we have \begin{align}\label{eupevstrong}
\sum_{e^u\leq p\leq e^v}\frac{1}{p}\ \leq\ \frac{O(1)+u\log(v/u)}{u}.
\end{align}
Let $u_0=e^u, v_0=e^v$, then
\begin{align}\label{eq:u0pv0strong}
    \sum_{u_0\leq p\leq v_0}\frac{1}{p}\ \leq\  \log\left(\frac{\log v_0}{\log u_0}\right)+\frac{C}{\log u_0}.
\end{align}

We now prove that $\log\left(\frac{\log v_0}{\log u_0}\right)+\frac{C}{\log u_0}< \frac{C+\log (v_0/u_0)}{\log u_0}$. Since we have $t<e^{t-1}$ for any $t>1$,
\begin{align}
   \frac{v}{u}\ &<\ e^{v/u-1}
    \nonumber\\
 \log (v/u)\ &<\ \frac{v}{u}-1
    \nonumber\\
  u\log(v/u)\ &<\ (v-u)\nonumber\\
    \log u_0\log\left(\frac{\log v_0}{\log u_0}\right)\ &<\ \log(v_0/u_0).
\end{align}

Therefore, we have
\begin{align}\label{eq:u0pv0weak}
\sum_{u_0\leq p\leq v_0}\frac{1}{p}\ <\  \frac{C+\log(v_0/u_0)}{\log u_0}.
\end{align}
\end{proof}
\begin{rek}
We have two inequalities in Lemma \ref{lem:eupevandu0pv0}; \eqref{eq:u0pv0weak} is weaker than \eqref{eupevstrong} as we replace $u/v$ by $e^{u/v}-1$, and the following theorems apply mostly the result from \eqref{eq:u0pv0weak}. In fact, it turns out that in this paper the inequality in \eqref{eq:u0pv0weak} is tight enough and applicable in most cases, and we will adopt the inequality in \eqref{eupevstrong} otherwise.
\end{rek}
\begin{lem}\label{lem:plnp}
We have \begin{eqnarray}
\sum_{p\geq t}\frac{1}{p\log p}\ =\ \frac{1}{\log t}+O\left(\frac{1}{(\log t)^2}\right).
\end{eqnarray}
\end{lem}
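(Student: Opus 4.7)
The proof will mirror the Abel (partial) summation technique used in the proof of Lemma~\ref{lem:eupevandu0pv0}, but with the roles of the weights reversed: here we weight by $1/\log p$ and sum against the counting function $S(x):=\sum_{p\leq x}1/p$, which has already been estimated in that proof.

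My first step is to record the Mertens-type estimate established in the course of proving Lemma~\ref{lem:eupevandu0pv0}:
\begin{equation}
S(x)\;=\;\sum_{p\leq x}\frac{1}{p}\;=\;\log\log x + M + O\!\left(\tfrac{1}{\log x}\right),
\end{equation}
for some constant $M$. Then I would write the sum as a Riemann--Stieltjes integral
\begin{equation}
\sum_{p\geq t}\frac{1}{p\log p}\;=\;\int_{t^-}^{\infty}\frac{dS(x)}{\log x}
\end{equation}
and integrate by parts. Since $(1/\log x)' = -1/(x(\log x)^2)$ and $S(N)/\log N \to 0$ as $N\to\infty$ (because $\log\log N/\log N\to 0$), the boundary contribution at infinity drops out and I obtain
\begin{equation}
\sum_{p\geq t}\frac{1}{p\log p}\;=\;-\frac{S(t)}{\log t}\;+\;\int_{t}^{\infty}\frac{S(x)}{x(\log x)^2}\,dx.
\end{equation}

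The second step is to substitute the expansion for $S(x)$ into both terms and evaluate. For the boundary piece,
\begin{equation}
-\frac{S(t)}{\log t}\;=\;-\frac{\log\log t}{\log t}\;-\;\frac{M}{\log t}\;+\;O\!\left(\tfrac{1}{(\log t)^2}\right).
\end{equation}
For the integral piece, I split $S(x)=\log\log x + M + O(1/\log x)$ into three parts. The constant part contributes $M\int_t^\infty dx/(x(\log x)^2) = M/\log t$. The error part contributes $O(\int_t^\infty dx/(x(\log x)^3)) = O(1/(\log t)^2)$. The main part is handled by the substitution $u=\log x$:
\begin{equation}
\int_{t}^{\infty}\frac{\log\log x}{x(\log x)^2}\,dx\;=\;\int_{\log t}^{\infty}\frac{\log u}{u^2}\,du\;=\;\frac{\log\log t}{\log t}\;+\;\frac{1}{\log t},
\end{equation}
the last equality coming from a routine integration by parts ($v=\log u$, $dw=du/u^2$).

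Adding everything, the $\log\log t/\log t$ and $M/\log t$ terms cancel, leaving exactly $1/\log t + O(1/(\log t)^2)$, which is the asserted estimate. No step looks like a real obstacle: the only care required is checking that the $O(1/\log x)$ error in Mertens' formula, when integrated against $1/(x(\log x)^2)$, produces an acceptable $O(1/(\log t)^2)$ tail rather than anything larger, and that the boundary term at infinity genuinely vanishes — both of which follow immediately from the growth rates already used in Lemma~\ref{lem:eupevandu0pv0}.
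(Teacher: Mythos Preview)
Your argument is correct. Both you and the paper apply Abel summation, but with different decompositions of $\tfrac{1}{p\log p}$. The paper writes $\tfrac{1}{p\log p}=\tfrac{\log p}{p}\cdot(\log p)^{-2}$ and sums against the Chebyshev-type weight $A(x)=\sum_{t<p\le x}\tfrac{\log p}{p}=\log x-\log t+O(1)$ with smooth factor $f(x)=(\log x)^{-2}$; the resulting integrals are elementary and the main term $1/\log t$ drops out in one step. You instead write $\tfrac{1}{p\log p}=\tfrac{1}{p}\cdot(\log p)^{-1}$ and sum against the Mertens-type weight $S(x)=\log\log x+M+O(1/\log x)$ with smooth factor $1/\log x$. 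This forces you through an extra layer: the substitution $u=\log x$ and a second integration by parts to evaluate $\int(\log u)/u^{2}\,du$, followed by cancellation of the spurious $\log\log t/\log t$ and $M/\log t$ terms between the boundary and integral pieces. Both routes are short; the paper's is marginally cleaner because the counting function is linear in $\log x$ from the outset, while yours has the minor virtue of reusing the full Mertens estimate already recorded in Lemma~\ref{lem:eupevandu0pv0} rather than the intermediate $\sum\tfrac{\log p}{p}$ estimate.
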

\begin{proof}
We use Abel's Partial Summation Formula. Let

\begin{align}\label{staxft}
    S(t)\ &=\ \sum_{p\geq t}\frac{1}{p\log p}=\ \sum_{p\geq t}\frac{\log p}{p}\cdot(\log p)^{-2}.\nonumber\\
   A(x)\ &=\sum_{p\leq x}\frac{\log p}{p}-\sum_{p\leq t}\frac{\log p}{p}=\ \log x-\log t+O(1).\nonumber\\
   f(t)\ &=\ \frac{1}{(\log t)^2},\
   f'(t)\ =\ -\frac{2}{t(\log t)^3}.
\end{align}
    Because $A(x)$ concerns primes in the interval $(t,x]$, the following sum differs from the one in \eqref{staxft} by at most the term $\frac{1}{t\log t}$, which can be absorbed in the error; thus, it is sufficient to study $S(t)$:
\begin{align}\label{soft}
    S(t)\ &=\ \frac{1}{t\log t}+ A(\infty)f(\infty)-\int_{t}^\infty A(x)f'(x)\ dx\nonumber\\
    &=\ \frac{1}{t\log t}+2\int_{t}^\infty(\log x-\log t+O(1))\frac{1}{x(\log x)^3}\ dx \nonumber\\
    &=\ \frac{1}{t\log t}+2\int_t^\infty\frac{1}{x(\log x)^2}\ dx-2\int_t^\infty(\log t-O(1))\frac{1}{x(\log x)^3}\ dx\nonumber\\
    &=\ \frac{1}{t\log t}- 2\frac{1}{\log x}\bigg|_t^\infty+\frac{\log t}{(\log x)^2}\bigg|_t^\infty-\frac{O(1)}{(\log x)^2}\bigg|_t^\infty\nonumber\\
    &=\ \frac{2}{\log t}-\frac{1}{\log t}+O\left(\frac{1}{(\log t)^2}\right)\nonumber\\
    &=\ \frac{1}{\log t}+O\left(\frac{1}{(\log t)^2}\right),
\end{align}
which completes our proof.
\end{proof}
\begin{lem}\label{lem:generalizedfnpn}
If $P(n)\geq 5$ and $r\geq 1$, we have
\begin{align}\label{frnpnr}
    f_r(n)\ &\leq\ P(n)^r\cdot\frac{\log n}{\log P(n)}.
\end{align}
\end{lem}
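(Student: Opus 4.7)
The plan is to reduce the inequality, via the prime factorization $n = \prod_{i=1}^k p_i^{a_i}$ with $p_k = P(n)$, to a pointwise comparison. Since $f_r(n) = \sum_{i=1}^k a_i p_i^r$ and $\log n = \sum_{i=1}^k a_i \log p_i$, I can rewrite the desired inequality as
\begin{equation*}
P(n)^r\,\frac{\log n}{\log P(n)} - f_r(n)\ =\ \sum_{i=1}^k a_i \log p_i\left(\frac{P(n)^r}{\log P(n)} - \frac{p_i^r}{\log p_i}\right),
\end{equation*}
so it suffices to prove that the auxiliary function $g(x) := x^r/\log x$ satisfies $g(p_i) \leq g(P(n))$ for every prime $p_i$ dividing $n$, whenever $P(n) \geq 5$ and $r \geq 1$.

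Next I would analyze $g$ via its derivative, $g'(x) = x^{r-1}(r\log x - 1)/(\log x)^2$, which is positive exactly when $x > e^{1/r}$. Since $r \geq 1$ forces $e^{1/r} \leq e < 3$, the function $g$ is strictly increasing on the interval $[3,\infty)$. Consequently, for every prime factor $p_i \geq 3$ of $n$ the inequality $g(p_i) \leq g(P(n))$ follows immediately from $p_i \leq P(n)$, and there is nothing to check.

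The only genuine obstacle, and the whole reason the hypothesis is $P(n) \geq 5$ (rather than $P(n) \geq 3$), is the case $p_i = 2$, where $g$ is not monotonic on $[2, P(n)]$. Here the plan is to examine the ratio $g(2)/g(P(n)) = (2/P(n))^r \log P(n)/\log 2$ directly and verify it is at most $1$ by showing it is decreasing both in $r \geq 1$ (elementary, since $\log(2/P(n)) < 0$) and in $P(n) \geq 5$ (since $\partial_P[P^{-r}\log P] = P^{-r-1}(1 - r\log P) < 0$ for $P \geq 5$, $r \geq 1$). The extremal case is therefore $r = 1$, $P(n) = 5$, which gives $g(2)/g(5) = (2/5)\log_2 5 < 1$, completing the verification and hence the proof. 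The entire argument is thus a short monotonicity analysis, with the lone arithmetic check at the extremal pair $(p_i, P(n)) = (2,5)$ being what pins down the precise threshold in the hypothesis.
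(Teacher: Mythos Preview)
Your proof is correct and follows the same approach as the paper: both reduce the inequality to the pointwise bound $g(p_i)\le g(P(n))$ for $g(x)=x^r/\log x$, using that $g$ is increasing on $[e^{1/r},\infty)$. Your treatment is in fact more thorough, since you explicitly handle the prime $p_i=2$ (which may fall outside the monotone range when $r$ is close to $1$) by checking the extremal case $(r,P(n))=(1,5)$; the paper's proof glosses over this point, but it is precisely why the hypothesis reads $P(n)\ge 5$.
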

\begin{proof}
Consider the function $g(x)=\frac{x^r}{\log x}$, where $r$ is a real number and $x\geq e^{1/r}$, then
\begin{align}
    g'(x)\ &=\ \frac{rx^{r-1}\log x-x^r\cdot 1/x}{(\log x)^2}\nonumber\\
    &=\ \frac{x^{r-1}(r\log x-1)}{(\log x)^2}\nonumber\\
    &>\ 0,
\end{align}
which means $g(x)$ increases when $x\geq e^{1/r}$. Without the loss of generality, let $p_1=P(n)$, then we have
\begin{align}
    f_r(n)\ &=\ \sum_{i=1}^ka_ip_i^r\nonumber\\
    &\leq\ \sum_{i=1}^ka_ip_1^r\cdot\frac{\log p_i}{\log p_1}\nonumber\\
    &=\ \frac{P(n)^r}{\log P(n)}\cdot\sum_{i=1}^k\log p_i^{a_i}\nonumber\\
    &=\ P(n)^r\cdot\frac{\log n}{\log P(n)},
\end{align}
which completes our proof.
\end{proof}
\begin{lem}\label{lem:lowerboundofp}
The number of $n$ up to $x$ not satisfying
\begin{align}
    P(n)\ &>\ x^{1/\log\log x}\ \textrm{and } P(n+1)\ >\ x^{1/\log\log x}
\end{align}
is at most $O\left(\frac{x}{(\log x)^2}\right)$.
\end{lem}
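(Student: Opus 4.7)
The plan is to reduce the bound to a standard estimate on smooth numbers and then invoke de Bruijn's theorem, which is already listed among the tools used in this paper. The set of "bad" $n \le x$ — those failing the stated condition — is contained in
$$\{n \le x : P(n) \le x^{1/\log\log x}\} \;\cup\; \{n \le x : P(n+1) \le x^{1/\log\log x}\}.$$
By shifting the index $n \mapsto n+1$, the second set has cardinality differing by at most $1$ from $\Psi(x+1, x^{1/\log\log x})$. Hence up to an additive constant it suffices to show
$$\Psi\!\left(x, x^{1/\log\log x}\right) \;=\; O\!\left(\frac{x}{(\log x)^2}\right).$$

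First I would set $y := x^{1/\log\log x}$, so that $u := \log x/\log y = \log\log x$. Since $u \to \infty$ with $\log y = \log x/\log\log x$ also tending to infinity, we are comfortably inside the range where de Bruijn's estimate applies, giving
$$\Psi(x,y) \;=\; x\,\rho(u)\bigl(1 + o(1)\bigr),$$
where $\rho$ is the Dickman function. Thus the problem reduces to a sufficiently strong upper bound on $\rho(\log\log x)$.

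Next I would apply the classical asymptotic $\rho(u) = u^{-(1+o(1))u}$ as $u \to \infty$, which gives
$$\rho(\log\log x) \;=\; \exp\!\bigl(-(1+o(1))\log\log x \cdot \log\log\log x\bigr).$$
To conclude the desired bound $x\rho(\log\log x) \ll x/(\log x)^2$, one needs
$$(1+o(1))\,\log\log x \cdot \log\log\log x \;\geq\; 2\log\log x,$$
which holds for all sufficiently large $x$ since $\log\log\log x \to \infty$. In fact the bound obtained is much stronger than $x/(\log x)^2$; this slack is consistent with the authors' remark that many of their scalings are not optimal because they already lie beneath the main term.

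The only genuine obstacle is verifying that de Bruijn's theorem is available in the regime $u = \log\log x$, $\log y = \log x/\log\log x$. This is a very mild range (far from the known difficulties near $y \approx (\log x)^{1+\varepsilon}$), so the classical statement from \cite{Bru} applies directly; no additional care beyond citing the theorem is required. Everything else is a short computation with $\rho(u) = u^{-(1+o(1))u}$.
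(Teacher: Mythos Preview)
Your argument is correct and follows essentially the same route as the paper: both reduce to bounding $\Psi(x,x^{1/\log\log x})$ via de~Bruijn's smooth-number estimate with $u=\log\log x$, then observe that the resulting decay $\exp(-(1+o(1))u\log u)$ beats any fixed power of $\log x$. The only cosmetic difference is that the paper quotes de~Bruijn's explicit inequality for $\Psi(x,y)$ directly, whereas you phrase it through $\Psi(x,y)\sim x\rho(u)$ together with $\rho(u)=u^{-(1+o(1))u}$; these are equivalent formulations of the same theorem, and your version makes the union-bound step for $n$ and $n+1$ more explicit than the paper does.
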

\begin{proof} Let $\Psi(x,y)$ denote the number of  $n$ up to $x$ with prime factors no greater than $y$, and set $u = \log x / \log y$.
 A result from De Bruijn \cite{Bru} states that if $(\log x)^2<y\leq x^{1/3}$ then
\begin{align}
    \log\Psi(x,y)\ &<\ x(\log y)^2\exp\left(-u\log u-u\log\log u+O(u)\right).
\end{align}
We replace $y$ with $x^{1/\log\log x}$ and find
\begin{align}
    \Psi(x,y)\ &<\ x\left(\frac{\log x}{\log\log x}\right)^2\cdot\exp\left(O(\log\log x)-\log\log x(\log\log\log x+\log\log\log\log x)\right)\nonumber\\
    &<\ x\left(\frac{\log x}{\log\log x}\right)^2\cdot\exp\left(O(-\log\log x\log\log\log x)\right)\nonumber\\
    &<\ O\left(x\left(\frac{\log x}{\log\log x}\right)^2\cdot\frac{1}{(\log x)^{\log\log\log x}}\right)\nonumber\\
    &<\ O\left(\frac{x}{(\log x)^2}\right).
\end{align}
Therefore, the number of $n$ for which
\begin{align}\label{loglogx}
    p\ >\ x^{1/\log\log x}\textrm{ and } q\ >\ x^{1/\log\log x}
\end{align}
doesn't hold is $O(x/(\log x)^2)$.
\end{proof}

\begin{rek}
We hence know that for the majority of integers no greater than $x$, $P(n) > x^{1/\log\log x}$, which means $P(n)$ is typically larger than $\log x$ to any power. Moreover, De Koninck and Ivić \cite{KonIv} showed that the sum of largest prime factors of integers up to $x$ is of size $x^2/\log x$, which means the average largest prime factor of integers up to $x$ is of size $x/\log x$. When $x$ is sufficiently large, $x/\log x > x^{\delta}$ for any $0<\delta<1$. Therefore, the average largest prime factor is greater than $x$ to any power less than $1$, indicating that a considerable number of $P(n)$ are very large.
\end{rek}

\section{Proof of Theorem \ref{thm:ruthaarondifference}}\label{sec:rad}
In this section we prove Theorem \ref{thm:ruthaarondifference}. We first introduce the following two lemmas generalized from Erd\Horig{o}s and Pomerance \cite{ErPom}, the first of which indicates that the largest prime factors of two consecutive integers are usually far apart, and the second proves that $P(n)^r$ is often the dominating element that determines the size of $f_r(n)$.
\begin{lem}\label{thm:xdeltapn}
For each $0< \epsilon<1$, there is a $\delta>0$ such that for sufficiently large $x$, the number of $n\leq x$ with
\begin{align}\label{deltapn}
    x^{-\delta}\ <\  P(n)/P(n+1)\ <\  x^{\delta}
\end{align}
is less than $\epsilon x$.
\end{lem}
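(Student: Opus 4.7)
The plan is to show that outside an exceptional set of size $O(\epsilon x)$, $|\log(P(n)/P(n+1))|$ exceeds $\delta \log x$. Three ingredients power the argument: Corollary~\ref{cor: epsilonx3} to confine both largest prime factors to intervals safely away from $\sqrt{x}$; Lemma~\ref{lem:eupevandu0pv0} to bound sums of $1/p$ over intervals; and the Chinese Remainder Theorem, reinforced by a two-dimensional sieve, to bound the number of $n \leq x$ carrying two prescribed prime divisors.

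First apply Corollary~\ref{cor: epsilonx3} to both $n$ and $n+1$. At the cost of at most $2\epsilon x/3$ exceptional integers, we may assume $p := P(n)$ and $q := P(n+1)$ both lie in $[x^{\delta_0}, x^{1/2-\delta_0}] \cup [x^{1/2+\delta_0}, x]$ with $\delta_0 = \delta_0(\epsilon)$. Enforcing $\delta < 2\delta_0$, the ``mixed'' case (one small, one large) forces $|\log(p/q)| \geq 2\delta_0 \log x$, outside the ratio window, and so contributes nothing. In the \emph{low-low} case $p, q \in [x^{\delta_0}, x^{1/2-\delta_0}]$, we have $pq < x$, and CRT (with $\gcd(n, n+1)=1$ forcing $p \neq q$) gives at most $x/(pq) + 1$ admissible $n \leq x$ for each pair. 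The ``$+1$'' summed over pairs is $\leq \pi(x^{1/2-\delta_0})^2 = o(x)$, and for the main term $x \sum 1/(pq)$ I invoke Lemma~\ref{lem:eupevandu0pv0}: fixing $p$, $\sum_{q \in (px^{-\delta}, px^\delta)} 1/q = O(\delta/\delta_0)$ provided $\delta < \delta_0/2$, and $\sum_p 1/p = O(1/\delta_0)$ over the interval. This yields $O(\delta x/\delta_0^2)$.

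For the \emph{high-high} case $p, q \in [x^{1/2+\delta_0}, x]$ we have $pq > x$, so the naive CRT count gives at most one $n$ per $(p,q)$ pair; summing over all such pairs is too lossy, since their total can reach $x^{2+\delta}/(\log x)^2$. Instead, re-parametrize by the complementary co-factors $a := n/p$ and $b := (n+1)/q$, both $< x^{1/2-\delta_0}$ and coprime. For each such $(a, b)$, the equation $qb - pa = 1$ has solutions forming a one-parameter family $(p, q) = (p_0 + bt, q_0 + at)$, and an upper-bound sieve (Brun or Selberg) applied to the two linear forms $bt + p_0$, $at + q_0$ gives
\[
\#\{t : p,\, q \text{ both prime and in their ranges}\} \;\ll\; \frac{x}{ab(\log x)^2}\prod_{\ell \mid ab}\frac{\ell}{\ell - 1}.
\]
Since $p/q = (p_0 + bt)/(q_0 + at) \to b/a$ with error $O(x^{-2\delta_0})$ negligible compared to $\delta\log x$, the ratio condition on $p/q$ becomes $a/b \in (x^{-\delta}, x^\delta)$. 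Summing using an integer analogue of Lemma~\ref{lem:eupevandu0pv0} together with the standard bound $\sum_{n \leq N} 1/\phi(n) = O(\log N)$ absorbs the singular-series factor and yields $\#\{\text{high-high}\} = O(\delta x)$.

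Combining, the total count is at most $2\epsilon x/3 + O(\delta x / \delta_0^2) + o(x)$. Choosing $\delta$ of order $\epsilon\delta_0^2$ (subject also to $\delta < \delta_0/2$ and $\delta < 2\delta_0$) drives this below $\epsilon x$ for $x$ sufficiently large. The main obstacle is the high-high case: a CRT-only count is useless because the admissible prime pairs outnumber $x$. The decisive move is to exchange the roles of prime factor and co-factor, summing over $(a, b)$ instead of $(p, q)$; this recasts the problem as counting simultaneous primes in a one-parameter family of linear forms, where a standard two-dimensional sieve supplies the crucial $1/(\log x)^2$ saving, and the singular-series dependence on $(a,b)$ is tame enough to be absorbed by elementary estimates on $\sum 1/\phi(n)$.
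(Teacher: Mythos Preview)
Your proposal is correct and follows essentially the same strategy as the paper's proof: the low--low case is handled via CRT and Lemma~\ref{lem:eupevandu0pv0}, and the high--high case by reparametrizing through the co-factors $(a,b)=(n/p,\,(n+1)/q)$, applying a two-dimensional upper-bound sieve to the resulting one-parameter family of linear forms, and summing with Landau's estimate $\sum_{n\le N}1/\varphi(n)\sim E\log N$. The only cosmetic difference is that you apply Corollary~\ref{cor: epsilonx3} to both $n$ and $n+1$ (spending $2\epsilon x/3$) so as to rule out the mixed case explicitly, whereas the paper conditions only on $P(n)$ and absorbs the location of $P(n+1)$ into the ratio constraint; note also that the sieve saving is really $(\log(x/ab))^{-2}\asymp(\delta_0\log x)^{-2}$, which is where the $\delta_0^2$ in your final choice $\delta\asymp\epsilon\delta_0^2$ must come from.
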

\begin{proof}
We know from Corollary \ref{cor: epsilonx3} that $\exists\ \delta_0=\delta_0(\epsilon)$ sufficiently small $(0< \delta_0\leq 1/4)$ such that for large $x$, the number of $n\leq x$ with
\begin{align}\label{pnxdelta0}
    P(n)\ <\  x^{\delta_0}\ \textrm{or}\ x^{1/2-\delta_0}\ \leq P(n)\ <\  x^{1/2+\delta_0}
\end{align}
is less than $\epsilon x/3$. Now we consider the remaining cases:
\begin{align*}
 &\textrm{(i) }   x^{\delta_0}\ \leq\ P(n)\ <\  x^{1/2-\delta_0}\\
 &\textrm{(ii) } P(n)\ \geq \ x^{1/2+\delta_0}.
\end{align*}

We will show that for every $0<\epsilon<1$, there exists $\delta$ such that such that the number of $n\leq x$ satisfying one of (i) and (ii) while \eqref{deltapn} holds is less than $\epsilon x/3$.  \ \\

\noindent\emph{We consider (i) first.} We know that for each pair of primes $p,q$, there are at most $1+\lfloor\frac{x}{pq}\rfloor$ choices\footnote{This is because the number of $n\leq x$ such that  $P(n)=p,P(n+1)=q$ is bounded by the number of $n\leq  x$ such that  $n\ \equiv\ 0\ (\textrm{mod p})$ and $n\ \equiv\ -1\ (\textrm{mod q})$. By Chinese Remainder Theorem, $n\ \equiv\ bp\ (\textrm{mod }pq)$ which means there are at most $1+\left\lfloor\frac{x}{pq}\right\rfloor$ such $n\leq x$. } of $n\leq x$ for which $P(n)=p,\ P(n+1)=q$. For inequality \eqref{deltapn} to hold, $px^{-\delta}< q< px^{\delta}$. Then for large $x$, the number of $n\leq  x$ in case \eqref{deltapn} holds is (we many assume $0< \delta< \delta_0/4$)
\begin{align}\label{1xpq}
    \sum_{\substack{x^{\delta_0}\leq p < x^{1/2-\delta_0}\\px^{-\delta}<q<px^{\delta}}}\left(1+\left\lfloor\frac{x}{pq}\right\rfloor\right)\ &<\ x^{1-2\delta_0+\delta}+x\sum_{x^{\delta_0}\leq p<x^{1/2-\delta_0}}\frac{1}{p}\sum_{px^{-\delta}<q<px^{\delta}}\frac{1}{q}\nonumber\\
    \ &<\ x^{1-2\delta_0+\delta}+x\sum_{x^{\delta_0}\leq p<x^{1/2-\delta_0}}\frac{1}{p
    }\cdot\frac{C+\log (px^\delta/ px^{-\delta})}{\log(px^{-\delta})}\textrm{ (Lemma \ref{lem:eupevandu0pv0})}\nonumber\\
    &<\ x^{1-2\delta_0+\delta}+x\sum_{x^{\delta_0}\leq p<x^{1/2-\delta_0}}\frac{1}{p}\cdot\frac{C+\log x^{2\delta}}{\log(px^{-\delta})}.
\end{align}
Moreover, since $p\geq  x^{\delta_0}>x^{4\delta}>x^{3\delta}$, we have $x^{-\delta}>p^{-1/3}$,
\begin{align}\label{clogx2deltap1}
x\sum_{x^{\delta_0}\leq p<x^{1/2-\delta_0}}\frac{1}{p}\cdot\frac{C+\log(x^{2\delta})}{\log (px^{-\delta})}\ &<\ x\sum_{x^{\delta_0}\leq p<x^{1/2-\delta_0}}\frac{1}{p}\cdot\frac{C+\log(x^{2\delta})}{\log (p^{2/3})}\nonumber\\
&=\ x\sum_{x^{\delta_0}\leq p<x^{1/2-\delta_0}}\frac{1}{p}\cdot\frac{\frac{3}{2}C+\log(x^{3\delta})}{\log p}.
\end{align}
When $x$ is sufficiently large, we have
\begin{align}\label{clogx2deltap2}
    x\sum_{x^{\delta_0}\leq p<x^{1/2-\delta_0}}\frac{1}{p}\cdot\frac{C+\log(x^{2\delta})}{\log (px^{-\delta})}\ &<\ x\sum_{x^{\delta_0}\leq p<x^{1/2-\delta_0}}\frac{1}{p}\cdot\frac{1.1\log(x^{3\delta})}{\log p}\nonumber\\
    &<\ \frac{3.3}{3}\cdot3\delta x\log x\sum_{x^{\delta_0}\leq p<x^{1/2-\delta_0}}\frac{1}{p\log p}\nonumber\\
    &<\ \frac{4}{3}\cdot 3\delta x\log x\frac{1}{\delta_0\log x}\textrm{ (Lemma \ref{lem:plnp})}\nonumber\\
    &<\ 4x\delta/\delta_0.
\end{align}
Therefore, we have
\begin{align}\label{4deltax}
     \sum_{\substack{x^{\delta_0}\leq p<x^{1/2-\delta_0}\\px^{-\delta}<q<px^{\delta}}}\left(1+\left\lfloor\frac{x}{pq}\right\rfloor\right)\ &<\ x^{1-2\delta_0+\delta}+4\delta x/\delta_0
\end{align}
If we choose $\delta$ such that
\begin{align}\label{epsilon13}
  \delta<\delta_0\epsilon/13,
\end{align}
then \eqref{4deltax} implies there are less than $\epsilon x/3$ choices of $n$. \\ \\

\noindent \emph{Now we consider case (ii).} Let $a=n/P(n)$ and $b=(n+1)/P(n+1)$. Then $a< x/x^{1/2+\delta_0}=x^{1/2-\delta_0}$, and because \eqref{thm:xdeltapn} holds, $b\leq \lfloor x^{1/2-\delta_0+\delta}\rfloor+1$ ($b=x^{1/2-\delta_0+\delta}$ is possible only when $x^{1/2-\delta_0+\delta}\in\mathbb{Z}$)\footnote{This is because $b=\frac{n+1}{P(n+1)}< \frac{x+1}{P(n)}\cdot x^{\delta}< \frac{x+1}{x^{1/2+\delta_0}}\cdot x^{\delta}=\frac{x+1}{x}\cdot x^{1/2-\delta_0+\delta}$, meaning that $b\leq  \lfloor\frac{x+1}{x}x^{1/2-\delta_0+\delta}\rfloor=\lfloor x^{1/2-\delta_0+\delta}+x^{-1/2-\delta_0+\delta}\rfloor$.
Because $-1/2-\delta_0+\delta< -1/2$, when $x$ is sufficiently large, we have $\lfloor x^{1/2-\delta_0+\delta}+x^{-1/2-\delta_0+\delta}\rfloor\leq \lfloor x^{1/2-\delta_0+\delta}\rfloor+1$; therefore, $b\leq \lfloor x^{1/2-\delta_0+\delta}\rfloor+1$.}, and $x^{-\delta}/2< a/b=nP(n+1)/(n+1)P(n)< 2x^\delta$. Meanwhile, when $a,b$ are fixed, the number of $n\leq x$ for which $n=aP(n),\ n+1=bP(n+1)$ is at most the number of primes $p\leq x/a$ such that  $(ap+1)/b$ is a prime. B\'ezout's Identity (or the Euclidean algorithm) gives that for integers $a,b$, there always exists $m,n\in\mathbb{Z}$ such that $ma+nb=\gcd(a,b)$. Now, because $\gcd(a,b)=1$ and $2\ |\ ab$, the number of $p$ with $bq-ap=1$ is greater than 1. Moreover, given $(ap+1)/b$ is an integer, all $p$ are in the same residue class modulo $b$. Let $p=kb+c$, where $k,c\in\mathbb{Z_+}$ and $c\in[0,b-1]$. Let $d=(ac+1)/b$. Then we are counting positive integer $k$ not exceeding $x/ab$ with primes $kb+c=P(n)$, $ka+d=P(n+1)$. Let the number of such $k$ be ${\rm Pairs}(x)$. By Brun's Method \cite{HalRi}, the number of primes $p\leq x$ congruent to $t$ modulo $q$ is no greater than $\frac{Ax}{q\log(x/q)}\cdot\prod_{p|q}(1-p^{-1})^{-1}$. Applying this result, we have
\begin{align}
   {\rm Pairs}(x)\ &\leq\ \frac{Ax}{ab\log^2(x/ab)}\prod_{p\ |\ ab}\left(1-\frac{1}{p}\right)^{-1}\nonumber\\
   &=\ \frac{Ax}{\varphi(a)\varphi(b)\log^2(x/ab)},
\end{align}
where $A$ is a constant of size around $8$ and $\varphi$ is Euler's totient function, or the number of integers up to $n$ that are relatively prime to $n$. Because we are investigating only the $x-$dependent components and not the multiplicative constants, our only concern here is the size of $1/\log^2(x/ab)$ in relation to the change of $a,b$. In particular, as all summations are positive, no cancellation is involved, and thus it suffices to show this sum is of the same size for all $a, b$ in our ranges.

We now bound above and below $1/\log^2(x/ab)$. Because $a,b\in\mathbb{N}$,
\begin{align}
    \frac{1}{\log^2(x/ab)}\ >\ \frac{1}{(\log x)^2}
\end{align}
Meanwhile,
\begin{align}\label{1xab}
    \frac{1}{\log^2(x/ab)}\ & <\ \frac{1}{\log^2(x/(x^{2\cdot{(1/2-\delta_0})}\cdot 2x^\delta))}\nonumber\\
&=\ \frac{1}{\log^2(2x^{2\delta_0-\delta})}\nonumber\\
    &\ <\frac{2}{(2\delta_0-\delta)^2(\log x)^2}.
\end{align}
This shows us that we can remove $ab$ at a cost of a multiplicative change in the result. We now use the result of Landau \cite{Lan}: if $E=\zeta(2)\zeta(3)/\zeta(6)$, then
$\sum_{n\leq x}1/\varphi(n)=E\log x+o(1)$. Therefore, using \eqref{1xab},
\begin{align}\label{rmPairsx}
   {\rm Pairs}(x)\ &<\ \frac{2Ax}{(2\delta_0-\delta)^2(\log x)^2}\sum_{a\leq x^{1/2-\delta_0}}\frac{1}{\varphi(a)}\sum_{ax^{-\delta}/2<b<2ax^{\delta}}\frac{1}{\varphi(b)}\nonumber\\
   &< \ \frac{2Ax}{(2\delta_0-\delta)^2(\log x)^2}\sum_{a\leq x^{1/2-\delta_0}}\frac{E\log( 2ax^{\delta}/\frac{ax^{-\delta}}{2})+o(1)}{\varphi(a)} \nonumber\\
   &<\ \frac{Ax}{(2\delta_0-\delta)^2(\log x)^2}\sum_{a\leq x^{1/2-\delta_0}}\frac{3E\log x^{2\delta}+o(1)}{\varphi(a)}\nonumber\\
   &<\ \frac{7EA\delta x}{(2\delta_0-\delta)^2\log x}\sum_{a\leq x^{1/2-\delta_0}}\frac{1}{\varphi(a)}\nonumber\\
   &<\ \frac{8E^2A\delta x(1/2-\delta_0)\log x}{(2\delta_0-\delta)^2\log x}\nonumber\\
   &<\ \frac{4E^2A\delta x}{(2\delta_0-\delta)^2}.
\end{align}
Let
\begin{align}\label{d0e4e2a}
    0\ <\ \delta\ <\ \delta^2_0\epsilon/4E^2A \textrm{ and }\delta\ <\ \delta_0/4,
\end{align}
then \eqref{rmPairsx} implies there are fewer than $\epsilon x/3$ choices for such $n$. Thus, if we choose $\delta$ such that  both \eqref{epsilon13} and \eqref{d0e4e2a} hold, then there are less than $\epsilon x$ choices of $n$ for every sufficiently large $x$, completing our proof.
\end{proof}
\begin{rek}
For our purposes, the estimation in inequalities \eqref{1xpq} and \eqref{clogx2deltap1} is sufficient, but if we substitute $\sum_{px^{-\delta}<q<px^\delta}1/q$ with $\log\frac{\log px^\delta}{\log px^{-\delta}}+C/\log px^{-\delta}$, in other words, if we use inequality \eqref{eq:u0pv0strong} rather than \eqref{eq:u0pv0weak}, then with a bit more work we could get $\delta<\frac{\epsilon\delta_0}{6.12\log(1/2\delta_0)}$.
\end{rek}
\begin{rek} Moreover, we can easily extend the result to $r\geq 1$. From \eqref{epsilon13} and \eqref{d0e4e2a} we know $\delta$ depends on $\epsilon$, and because $\epsilon$ is arbitrary between $0$ and $1$, $\delta$ can be very small. For every $0<\epsilon<1$, we find $\delta'=r\cdot \delta$ (where $\delta$ satisfies \eqref{epsilon13} and \eqref{d0e4e2a}; hence $\delta'$ can be very small) such that for sufficiently large $x$, the number of $n\leq x$ with
\begin{align}
    x^{-\delta'}\ <\ P(n)^r/P(n+1)^r\ <\ x^{\delta'}
\end{align}
is less than $\epsilon x$.
\end{rek}
\begin{lem}\label{thm:1minusepsilon}
When $r\geq 1$, for every $\epsilon>0$, let $\delta=\delta_0r\epsilon/14$, then for sufficiently large $x$ there are at least $(1-\epsilon)x$ choices for composite integer $n\leq x$ such that
\begin{align}\label{1plusxne}
    P(n)^r\ \leq\  f_r(n)\ <\  (1+x^{-\delta})P(n)^r.
\end{align}
\end{lem}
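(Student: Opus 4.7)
The lower bound $P(n)^r \le f_r(n)$ is immediate since $P(n)^r$ appears as a non-negative summand in $f_r(n) = \sum a_i p_i^r$. For the upper bound, write $n = P(n)\,m$ with $m = n/P(n)$, so the task reduces to proving $f_r(m) < x^{-\delta} P(n)^r$ for all but $\epsilon x$ composite $n \le x$; I will use the crude but sufficient bound $f_r(m) \le \Omega(m) P(m)^r \le (\log x) P(m)^r$, where $P(m)$ is essentially the second largest prime factor of $n$. The overall strategy mirrors that of Lemma \ref{thm:xdeltapn}: apply Corollary \ref{cor: epsilonx3} to discard at most $\epsilon x/3$ integers with $P(n) < x^{\delta_0}$ or $P(n) \in [x^{1/2-\delta_0}, x^{1/2+\delta_0})$, also discard the $O(\sqrt{x})$ prime powers $p^{k}$ with $k\geq 2$, and then split the remaining $n$ according to whether $P(n)$ is large or small.

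In the large range $P(n) \ge x^{1/2+\delta_0}$, automatically $m < x^{1/2-\delta_0}$ and hence $P(m) < x^{1/2-\delta_0}$, giving $f_r(m) \le (\log x)\, x^{r(1/2-\delta_0)}$, while $x^{-\delta} P(n)^r \ge x^{-\delta + r(1/2+\delta_0)}$. The required inequality thus reduces to $\log x \le x^{2r\delta_0 - \delta}$, which holds for all sufficiently large $x$ since $\delta \le \delta_0 r \epsilon/14 < 2r \delta_0$ makes the exponent positive. So this range contributes no further exceptions.

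The delicate range is $x^{\delta_0} \le P(n) < x^{1/2-\delta_0}$. Here a \emph{bad} $n$ (one with $f_r(m) \ge x^{-\delta} P(n)^r$) must satisfy $P(m) \ge P(n)\, x^{-\delta/r} (\log x)^{-1/r} \ge P(n)\, x^{-2\delta/r}$ for large $x$, and therefore carries two prime divisors $p = P(n)$, $q = P(m)$ with $p \in [x^{\delta_0}, x^{1/2-\delta_0})$ and $q \in [p\, x^{-2\delta/r},\, p]$. The count of bad $n$ is thus bounded by
\[
  \sum_{p}\;\sum_{q}\left(\frac{x}{pq} + 1\right),
\]
where the $+1$ terms sum to $O(x/\log x)$. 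Two applications of Lemma \ref{lem:eupevandu0pv0} then handle the main double sum: the inner sum is $\ll \delta/(r\delta_0)$ (using $\log\bigl(p/(p x^{-2\delta/r})\bigr) = (2\delta/r)\log x$ and $\log p \ge \delta_0 \log x$), and the outer sum contributes a constant depending only on $\delta_0$. Plugging in $\delta = \delta_0 r \epsilon/14$ and combining with the other discarded pieces yields a total exceptional set of size at most $\epsilon x$, which is what the lemma demands.

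The main obstacle is this second range: unlike the large-$P(n)$ case, a size comparison alone is not enough, and one must exploit the sparsity of integers carrying two comparable large prime factors through the sieve-type bounds of Lemma \ref{lem:eupevandu0pv0}. The precise constant (and in particular the factor $1/14$ in the hypothesis on $\delta$) arises as the product of the various implicit constants from these sum estimates together with the $1/3$ budgeted for Corollary \ref{cor: epsilonx3}.
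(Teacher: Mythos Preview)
Your strategy mirrors the paper's closely—the main difference being that the paper uses only the first clause of Corollary~\ref{cor: epsilonx3} and never splits on $x^{1/2}$—but your double-sum estimate in the range $x^{\delta_0}\le p<x^{1/2-\delta_0}$ has a genuine gap.

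You bound the inner sum $\sum_{px^{-2\delta/r}\le q\le p}1/q$ by replacing $\log p$ with its minimum $\delta_0\log x$, obtaining $\ll\delta/(r\delta_0)$, and then bound the outer sum $\sum_p 1/p$ by a ``constant depending only on $\delta_0$''. That constant is $K(\delta_0)\sim\log\bigl(1/(2\delta_0)\bigr)$; since $\delta_0=\delta_0(\epsilon)$ must tend to $0$ as $\epsilon\to0$ (by the very way Corollary~\ref{cor: epsilonx3} is invoked), $K(\delta_0)$ is \emph{not} uniformly bounded. Plugging in the prescribed $\delta=\delta_0 r\epsilon/14$ then gives a main term of order $K(\delta_0)\,\epsilon x/14$, which exceeds the remaining budget $2\epsilon x/3$ once $\delta_0$ is small enough; the argument does not close with this specific $\delta$.

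The repair is to retain the $\log p$ in the inner-sum bound rather than passing to $\delta_0\log x$. Lemma~\ref{lem:eupevandu0pv0} gives the inner sum as at most $\dfrac{C+(2\delta/r)\log x}{\log(px^{-2\delta/r})}$; since $2\delta/r<\delta_0$ one has $\log(px^{-2\delta/r})\gg\log p$, so the inner sum is $\ll(\delta/r)(\log x)/\log p$. The outer sum then reads $\sum_{p>x^{\delta_0}}1/(p\log p)$, which by Lemma~\ref{lem:plnp} is $\sim1/(\delta_0\log x)$. The $\log x$ factors cancel and the main term becomes $O\bigl(\delta x/(r\delta_0)\bigr)=O(\epsilon x/14)$, as required. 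This is exactly the paper's route, and with Lemma~\ref{lem:plnp} in hand your case split on $x^{1/2+\delta_0}$ becomes unnecessary: the sum over all $p>x^{\delta_0}$ is already controlled.
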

\begin{proof}
We know that any $n$ at most $x$ is divisible by at most $\log x/\log 2$ primes. By the Prime Number Theorem, the number of primes up to $x$ is $O(x/\log x)$. Then for any $0<\epsilon_0<1$, we can always find sufficiently large $x$ such that $O(x/\log x)=\epsilon_0 x$, which means the number of prime $n$ up to $x$ is $o(x)$ and can be absorbed. Thus, we have for sufficiently large $x$ and composite $n$:
\begin{align}\label{frn}
    f_r(n)\ &\ =\ P(n)^r + f_r(n/P(n)\nonumber\\
    &\ \leq\ P(n)^r+P(n/P(n))^r\cdot\log x/\log 2\nonumber\\
    &\ <\ P(n)^r+P(n/P(n))^r\cdot x^{\delta}
\end{align}
for any fixed $\delta$. We prove that there are at most $\epsilon x$ choices of $n\leq x$ such that
\begin{align}\label{frngeq}
f_r(n)\ \geq\ (1+x^{-\delta})P(n)^r
\end{align}
holds for all except $o(x)$ choices of $n\leq x$. Then, for such $n$, if \eqref{frngeq} holds, from \eqref{frn} we have
\begin{align}\label{p(n/pn)r}
    P(n/P(n))^r\ >\ x^{-2\delta}\cdot P(n)^r.
\end{align}
Let $\epsilon>0$. We know from Corollary \ref{cor: epsilonx3} that there exists $\delta_0=\delta_0(\epsilon)$ such that for large $x$ the number of $n\leq x$ with $P(n)<x^{\delta_0}$ is at most $\epsilon x/3$. Meanwhile, we know for each pair of primes $p,q$, there are at most $\lfloor\frac{x}{pq}\rfloor$ choices of $n\leq x$ with $P(n)^r=p^r$ and $P(n/P(n))^r=q^r$. Hence, from \eqref{p(n/pn)r}, for large $x$ the number of $n\leq x$ for which \eqref{p(n/pn)r} doesn't hold is at most (assume $0<\delta<\delta_0/3$):
\begin{align}\label{oxepsilonx3generalized}
    o(x)+\frac{\epsilon x}{3}+\sum_{\substack{x^{\delta_0}<p\\x^{-2\delta/r}p<q\leq p}}\left\lfloor\frac{x}{pq}\right\rfloor\ &<\ \frac{\epsilon x}{2}+x\sum_{x^{\delta_0}<p}\frac{1}{p}\sum_{x^{-2\delta/r}p<q\leq p}\frac{1}{q}\nonumber\\
    &<\ \frac{\epsilon x}{2}+x\sum_{x^{\delta_0}<p}\frac{1}{p}\cdot\frac{C+(2\delta\log x)/r}{\log (x^{-2\delta/r}p)}\nonumber\\
    &<\ \frac{\epsilon x}{2}+\frac{6\delta}{r}\cdot x\log x\sum_{x^{\delta_0}<p}\frac{1}{p\log p}\nonumber\\
      &<\ \frac{\epsilon x}{2}+\frac{7\delta x}{r\delta_0}\ (\textrm{Lemma \ref{lem:plnp}})
\end{align}
where $o(x)$ accounts for all $n\leq x$ of the form $m=P(n)^k$ for some positive integer $k$. We take $\delta=\delta_0r\epsilon/14$, then \eqref{oxepsilonx3generalized} is no greater than $\epsilon x$. Therefore, the number of $n\leq x$ such that  $P(n)^r <  f_r(n) < (1+x^{-\delta}P(n)^r)$ is at least $(1-\epsilon)x$, completing the proof.
\end{proof}

We recall Theorem \ref{thm:ruthaarondifference}.
\differencethm*
Now, we know from Lemma \ref{thm:xdeltapn} that there are less than $\epsilon x$ choices of $n$ which satisfy
\begin{align}
 x^{-r\delta} \ < \ \frac{P(n)^r}{P(n+1)^r}\ <\ x^{r\delta}.
\end{align}
Without loss of generality, let $P(n)>P(n+1)$; the other case is handled similarly\footnote{One of the Erd\Horig{o}s-Turán conjectures asserts that the asymptotic density of  $n\leq x$ with $P(n)>P(n+1)$ is $\frac{1}{2}$ \cite{ErPom}. Erd\Horig{o}s and Pomerance \cite{ErPom} showed that the number of $n$ up to $x$ with $P(n)>P(n+1)$ is greater than $0.0099 x$. This result was recently improved by Lü and Wang \cite{LuWang}, who proved that the density is larger than $0.2017$.}. Then there are less than $\epsilon x$ choices of $n$ with
\begin{align}\label{rdelta}
    (x^{-r\delta}-1)P(n+1)^r\ &<\ P(n)^r-P(n+1)^r\ <\ (x^{r\delta}-1)P(n+1)^r.
    \end{align}
Because $r\delta>0$, the LHS of \eqref{rdelta} is negative, which means there are less than $\epsilon x$ choices of $n$ with
    \begin{align}
    0\ &<\ P(n)^r-P(n+1)^r\ <\ (x^{r\delta}-1)P(n+1)^r.
\end{align}
Then for at least $(1-\epsilon) x $ choices of $n$, we have
\begin{align}
P(n)^r-P(n+1)^r\ &>\ (x^{r\delta}-1)P(n+1)^r.
\end{align}
Meanwhile, we know from Lemma \ref{thm:1minusepsilon} that for all but $\epsilon x$ choices of $n$ we have
\begin{align}
    P(n)^r\ &<\ f_r(n)\ <\ (1+x^{-\delta})P(n)^r\nonumber\\
      P(n+1)^r\ &<\ f_r(n+1)\ <\ (1+x^{-\delta})P(n+1)^r.
\end{align}
Therefore, there are more than $(1-\epsilon)x$ choices of $n$ with
\begin{align}
f_r(n)-f_r(n+1)\ &>\ P(n)^r-(1+x^{-\delta})P(n+1)^r\nonumber\\
&>\ (x^{r\delta}-x^{-\delta}-1)P(n+1)^r\nonumber\\
&>\ (x^{r\delta}-x^{-\delta}-1)x^{r/\log\log x},
\end{align}
which means the density of $n$ with
\begin{align}\label{absfnfn+1}
    |f_r(n)-f_r(n+1)|\ &<\ (x^{r\delta}-x^{-\delta}-1)x^{r/\log\log x}
\end{align}
is $0$. In fact, when $x$ is sufficiently large, the RHS of \eqref{absfnfn+1} is greater than $x^\delta$, $(\log x)^k$, for any $k$, or $O(1)$, which means the density of $n$ up to $x$ with $|f_r(n)-f_r(n+1)|<k(x)$, where $k(x)$ is one of the functions above, is also $0$.

\section{Proof of Theorem \ref{thm:generalizednumberofn}}\label{sec:non}
Recall that the notation $\#\mathcal{R}_{(1,-1)}^r(x)$ denotes the number of integers up to $x$ with $f_r(n)=f_r(n+1)$. Recall Theorem \ref{thm:generalizednumberofn}.
\negativer*
\rationalr*
\rgeqone*
We first show that the number of $n\leq x$ when $r=-1$ is less than $x^{2(\log\log x/\log x)^{1/2}}$, which means for a fixed $\delta$ arbitrarily small and $x$ sufficiently larger, $\#\mathcal{R}_{(1,-1)}^{-1}(x)\leq x^\delta$. Then, we will show, using linear independence, that when $r$ is a non-integer rational, $r-$th power \textit{Ruth-Aaron} numbers do not exist. Last, we will present an initial result by Erd\Horig{o}s and Pomerance \cite{ErPom} regarding the number of \textit{Ruth-Aaron} numbers before generalizing it to $r\geq 1$.

\subsection{Negative $r$}
\negativer*
\begin{proof}
First, we prove that when $r=-1$, $r-$th power \textit{Ruth-Aaron} number $n$ exists only when, in the unique prime factorization of $n$ and $n+1$, the power of each prime divides the prime itself. In other words,
\begin{align}
    n\ &=\ p_1^{a_1}\cdot p_2^{a_2}\cdots p_k^{a_k}\nonumber\\
    n+1\ &=\ q_1^{b_1}\cdot q_2^{b_2}\cdots q_l^{b_l},
\end{align}
where $p_i|a_i$ and $q_i|b_i$. \\
\indent Let $a_i',b_i'\in\mathbb{Q}$, and $a_i'=\frac{a_i}{p_i}$, $b_i'=\frac{b_i}{q_i}$, where $a_i,b_i\in\mathbb{N}$. Then we have
\begin{align}\label{proveaidividespi}
 a_1'+a_2'+\cdots+a_k' \ = \ f_{-1}(n)\ &=\ f_{-1}(n+1)\ =\ b_1'+b_2'+\cdots+b_l'\nonumber\\
 \frac{a_1}{p_1}+\frac{a_2}{p_2}+\cdots+\frac{a_k}{p_k}\ &=\ \frac{b_1}{q_1}+\frac{b_2}{q_2}+\cdots\frac{b_l}{q_l'}\nonumber\\
 ((a_1p_2\cdots p_k)+\cdots+(a_kp_1\cdots p_{k-1}))q_1\cdots q_l\ &=\ ((b_1q_2\cdots q_l)+\cdots+(b_lq_1\cdots q_{l-1}))p_1\cdots p_k.
\end{align}
It's obvious that the left hand side has to be divisible by $p_1p_2\cdots p_k$. Since $\gcd(n,n+1)=1$, $\gcd(q_i,p_j)=1$, which means
\begin{align}
    a_1p_2\cdots p_k+\cdots+a_kp_1\cdots p_{k-1}\ &\equiv\ 0\ (\textrm{mod }p_i)\nonumber\\
    a_ip_1\cdots p_{i-1}p_{i+1}\cdots p_k\ &\equiv\ 0\ (\textrm{mod }p_i)\nonumber\\
    a_i\ &\equiv\ 0\ (\textrm{mod }p_i),
\end{align}
which means each $p_i|a_i$. Similarly, $q_i|b_i$. \\ \\
\indent Next, we rewrite $n$ as
\begin{align}
    n\ &=\ p_1^{a_1}\cdot p_2^{a_2}\cdots p_t^{a_t}\cdot\prod_{i>t}p_i^{a_i}.
\end{align}
Without loss of generality, let $n$ be odd. If $n$ is even, then we instead analyze $n+1$. Let $p_1<p_2<\cdots <p_k$, $q_1<q_2<\cdots<q_l$, then $\log_{p_i}x<\log x$ and $p_i\geq3$. We have
\begin{align}\label{boundingpi}
    p_i^{a_i'p_i}\ &\leq\ n\nonumber\\
   p_i\log p_i\ \leq\ a_i'p_i\log p_i\ &\leq\ \log n\nonumber\\
   p_i\ <\ \log n\ &\leq\ \log x.\nonumber\\
   a_i'\ \leq\ (\log_{p_i}n)/{p_i}\ &<\ (\log_{p_i} x)/3,
\end{align}
which means for each $p_i$, where $i=1,2,\dots,t$, we can choose $a_i$ from $\{0,1,\dots,\log_{p_i} x/3\}$. Because $p_i\geq 3$, there are at most $(\log_{p_i} x)/3+1\leq \log x/3$ choices of each $a_i$, hence the number of choices of the first $t$ prime powers is at most $((\log x)/3)^t$. Moreover, because $p_i>t$ for $i>t$, we have
\begin{align}\label{fromtandonward}
    \left(\prod_{i>t}^k p_i^{a_i'}\right)^t\ &=\ \prod_{i>t}^kp_i^{a_i't}\nonumber\\
    &<\ \prod_{i>t}^k p_i^{a_i'p_i}\ =\ \prod_{i>t}^k p_i^{a_i}\nonumber\\
    &<\ x.
\end{align}
By the Fundamental Theorem of Arithmetic, no two products $\prod_{i>t}^kp_i^{a_i}$ can be equal, which means the number of $\prod_{i>t}^k p_i^{a_i'}$, hence $\prod_{i>t}^k p_i^{a_i}$, is at most $x^{1/t}$. Thus, the number of $n$ up to $x$ with $f_{-1}(n)=f_{-1}(n+1)$ is at most $(\log x)^{t}\cdot x^{1/t}$.
Let $s(t)=(\log x)^{t}\cdot x^{1/t}$, and we choose $t=(\log x/\log\log x)^{1/2}$. Then
\begin{align}
    s(t)\ &=\ (\log x)^{t}\cdot x^{1/t}\nonumber\\
  &=\ x^{t\log\log x/\log x + \frac{1}{t}}\nonumber\\
  &=\ x^{2(\log\log x/\log x)^{1/2}}.
\end{align}
Thus, the number of $n$ up to $x$ with $f_{-1}(n)=f_{-1}(n+1)$ is at most
$x^{(2\log\log x/\log x)^{1/2}}$. Since  $(\log\log x/\log x)^{1/2} \ll \epsilon$  for every $\epsilon>0$, we can find $x$ sufficiently large such that
\begin{align}
    x^{(2\log\log x/\log x)^{1/2}}\ &\ll\ x^{\epsilon}.
\end{align}
\end{proof}
\begin{rek}
We can give a similar proof when $r$ is a negative integer less than $-1$. Let $r=-m$, where $m$ is a positive integer. With an approach similar to that of \eqref{proveaidividespi}, we have $p_i^{m}|a_i$. Then, similar to the argument in \eqref{boundingpi},
\begin{align}
    p_i^{a_i'p_i^{m}}\ &\leq\ n\ \leq\ x\nonumber\\
    a_i'p_i^{m}\log p_i\ &\leq\ \log x\nonumber\\
    p_i\ &<\ (\log x)^{1/m}\nonumber\\
    a_i’\ &<\ (\log x)/p^m\nonumber\\
    &<\ (\log x)/3^m,
\end{align}
which means the number of choices of the first $t$ prime powers that can divide $n$ is at most $(\log x)^t$. Meanwhile, similar to the argument in \eqref{fromtandonward},
\begin{align}
    \left(\prod_{i>t}^k p_i^{a_i'}\right)^{t^{m}}\ &=\ \prod_{i>t}^kp_i^{a_i't^{m}}\nonumber\\
    &\leq \ \prod_{i>t}^kp_i^{a_i'p_i^{m}}\nonumber\\
    &<\ x,
    \end{align}
which means the number of choices of $\prod_{i>t}^kp_i^{a_i}$ is at most $x^{1/t^m}$. Therefore, the number of $n$ is bounded by
\begin{align}
    s(t)\ &=\ (\log x)^{t}\cdot x^{1/t^m}\nonumber\\
    &=\ x^{t\log\log x/\log x+1/t^m}.
    \end{align}
We choose $t=(m^m\log x/\log\log x)^{1/m+1}$, then
    \begin{align}
   S(t) &=\ x^{O\left((\log\log x/\log x)^{m/(m+1)}\right)}\nonumber\\
    &=\ x^{O\left((\log\log x/\log x)^{r/(r-1)}\right)}.
\end{align}
Therefore, the number of $n$ up to $x$ is at most $x^{O\left((\log\log x/\log x)^{r/(r-1)}\right)}$.
\end{rek}
Next, we look at the case where $r$ is a non-integer rational, which means $f_r(n)$ and $f_r(n+1)$ are summations of distinct radicals of prime powers. We prove that there are no \textit{Ruth-Aaron} numbers in this case.

\subsection{Non-integer Rational $r$}
\rationalr*
\begin{proof}
Let $r=x/y$ be a non-integer rational, then $f_r(n)=f_r(n+1)$ is equivalent to
\begin{align}\label{a=b=0}
    a_1p_1^{x/y}+a_2p_2^{x/y}+\cdots + a_kp_k^{x/y}\ &=\ b_1q_1^{x/y}+b_2q_2^{x/y}+\cdots+b_lq_l^{x/y}
\end{align}
where $a_i,b_i\in\mathbb{Z}$.
We must show that \eqref{a=b=0} holds only if $a_i=b_i=0$. In other words, $p_i^{x/y}$ and $q_j^{x/y}$, where $i=1,\dots, k$, $j=1,\dots,l$, are linearly independent over $\mathbb{Z}$.\\
\indent Boreico \cite{Bo} shows that when $n_i$ are distinct $k-$th power-free integers, the sum $S=\sum a_i\sqrt[k]{n_i}$, where $a_i\in\mathbb{Z}$ are not all zero, is non-zero. In this case, let $n_i=p_i^x$. Because all of $p_i,q_i$ are distinct, $n_i$ are distinct integers; meanwhile, because $\gcd(x,y)=1$, $n_i$ are $y-$th power free. Using Boreico's result we can easily show that when $\sum a_ip_i^{x/y}-\sum b_iq_i^{x/y}=0$, we must have $a_i=b_i=0$, thus completing the proof.
\end{proof}
\begin{rek}
Above discusses only the circumstance of $r>0$. Likewise, when $r$ is a negative non-integer rational, we can multiply both sides of equation \eqref{a=b=0} by a factor of $\left(\prod_{i=1}^kp_i\prod_{j=1}^lq_j\right)^{x/y}$, so $n_i$ remain distinct and $y-$th power-free, and we can still apply Boreico's result to get $a_i=b_i=0$.
\end{rek}
Next, we look at the case where $r\geq 1$. As mentioned, we first introduce a result by Erd\Horig{o}s and Pomerance.

The following result is due to Erd\Horig{o}s and Pomerance \cite{ErPom}; as we generalize many of these arguments, we give an expanded version of their argument.

\begin{thm}\label{thm:oxlnx1me}
For every $0< \epsilon< 1$,
\begin{align}
    \#\mathcal{R}_{(1,-1)}(x)\ &=\ O\left(\frac{x}{(\log x)^{1-\epsilon}}\right).
\end{align}
\end{thm}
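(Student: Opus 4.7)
The plan is to mimic the architecture of case (ii) in the proof of Lemma \ref{thm:xdeltapn}: first discard $n$ whose largest prime factors are abnormally small, then parameterize the remaining $n$ by $(a,b,p,q)$ with $n=pa$, $n+1=qb$, and apply Brun's sieve to count prime pairs in the resulting linear configuration.

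First I would apply Lemma \ref{lem:lowerboundofp} to dispose of the $O(x/(\log x)^2)$ integers $n\leq x$ with $\min(P(n),P(n+1))\leq x^{1/\log\log x}$; since $x/(\log x)^2\ll x/(\log x)^{1-\epsilon}$, these are already absorbed into the target bound. For the remaining $n$, set $p=P(n)$, $q=P(n+1)$, $a=n/p$, $b=(n+1)/q$, so $a,b\leq x^{1-1/\log\log x}$ and $pa+1=qb$. The complete additivity of $f$ converts the Ruth-Aaron condition $f(n)=f(n+1)$ into
\begin{equation}
p-q\ =\ f(b)-f(a),
\end{equation}
which, combined with $pa-qb=-1$, couples the small parameters $(a,b)$ to the large primes $(p,q)$ by two linear relations.

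The main estimate comes from summing a Brun-type bound over the pair $(a,b)$. For each admissible pair, the number of primes $p\leq x/a$ for which $(pa+1)/b$ is also prime is, exactly as in the derivation of \eqref{rmPairsx},
\begin{equation}
\ll\ \frac{x}{\varphi(a)\varphi(b)\log^2(x/(ab))}\prod_{\ell\mid ab}\left(1-\frac{1}{\ell}\right)^{-1},
\end{equation}
where the product factor is already folded into the $1/(\varphi(a)\varphi(b))$ via Mertens-type manipulations. Summing over dyadic ranges of $a$ and $b$ and invoking Landau's estimate $\sum_{n\leq Y}1/\varphi(n)=E\log Y+O(1)$, the cutoff $p\geq x^{1/\log\log x}$ keeps $\log(x/(ab))$ of size at least $(\log x)/\log\log x$ in the dominant block, so the $\log^{-2}$ from the sieve cancels against the $(\log x)^2$ from $\sum 1/\varphi$ up to a single surviving factor of $\log x$ in the denominator, together with a polynomial in $\log\log x$ that is absorbed by the $\epsilon$ slack in $(\log x)^{1-\epsilon}$.

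The main obstacle is the degradation of the sieve bound when $ab$ approaches $x$, where $\log(x/(ab))$ collapses: a dyadic split must be arranged so that the contribution of the bad range (small $\log(x/(ab))$) is itself controlled by the Ruth-Aaron constraint $p-q=f(b)-f(a)$, which in principle reduces $(a,b)\mapsto(p,q)$ to a unique solution but is awkward to quantify inside the sieve. Allowing the arbitrary $\epsilon>0$ in the exponent is precisely what permits absorbing the $\log\log$-type losses that arise both from Landau's asymptotic and from the sub-polynomial cutoff; a sharper bound without the $\epsilon$ would require tracking these factors explicitly, which is exactly what the generalization stated later as Theorem (with the $(\log\log x)^3\log\log\log x$ factor) is designed to achieve.
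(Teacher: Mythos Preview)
Your proposal has a genuine gap in the central estimate. You claim that when summing the Brun bound
\[
\frac{x}{\varphi(a)\varphi(b)\log^2(x/(ab))}
\]
over $a,b\leq x^{1-1/\log\log x}$, the $\log^{-2}$ from the sieve cancels against the $(\log x)^2$ coming from Landau's $\sum 1/\varphi$ ``up to a single surviving factor of $\log x$ in the denominator.'' This is arithmetically wrong: with $\log(x/(ab))\gg(\log x)/\log\log x$ and $\sum_{a\leq Y}1/\varphi(a)\sim E\log Y\asymp \log x$, the double sum is of order $x(\log\log x)^2$, not $x/(\log x)^{1-\epsilon}$. No factor of $1/\log x$ survives. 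The reason the analogous sum succeeds in Lemma~\ref{thm:xdeltapn} case~(ii) is twofold: there $a,b\leq x^{1/2-\delta_0}$ because $P(n)\geq x^{1/2+\delta_0}$, and---crucially---the hypothesis $x^{-\delta}<P(n)/P(n+1)<x^{\delta}$ pins $a/b$ into an interval of logarithmic length $O(\delta\log x)$, so the inner sum over $b$ contributes $O(\delta\log x)$ rather than $O(\log x)$. You have neither restriction: your $a,b$ range up to $x^{1-1/\log\log x}$, and you impose no coupling between $a$ and $b$ inside the sum. You yourself note that $p-q=f(b)-f(a)$ ``in principle reduces $(a,b)\mapsto(p,q)$ to a unique solution'' but call it ``awkward to quantify''; that is precisely the missing ingredient, and without exploiting it the sieve sum gives nothing better than $O(x(\log\log x)^2)$.

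The paper does not prove Theorem~\ref{thm:oxlnx1me} separately but obtains it as the $r=1$ case of the stronger bound, whose architecture is quite different from yours. First, the uniqueness of $(p,q)$ given $(k,m)=(n/p,(n+1)/q)$ is used not inside a sieve but to dispose of the range $k,m<x^{1/2}/\log x$ outright, which then forces an \emph{upper} bound $p,q\ll x^{1/2}\log x$ on all remaining $n$---this upper bound is the step your outline lacks. The argument then splits on whether $f(k)$ and $f(m)$ are small relative to $p$: if both are, the equation $p-q=f(m)-f(k)$ confines $q$ to a short interval around $p$ and one counts pairs $(p,q)$ directly via Brun--Titchmarsh and the Chinese Remainder Theorem; if $f(k)\geq p/(\log x)^2$, one introduces the \emph{third} prime $t=P(k)$, shows $p/((\log x)^2(\log\log x))<t\leq p$, and bounds the number of $n$ by $\sum_{p,t,q}(1+\lfloor x/(ptq)\rfloor)$ with $q$ and $t$ each confined near $p$. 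The Brun sieve over cofactor pairs $(a,b)$ from Lemma~\ref{thm:xdeltapn} is never invoked.
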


Now we look at our generalization to $r\geq 1$.

\subsection{Positive $r\geq 1$}
\rgeqone*
 Due to the length of the proof, we divide it into three sections. In Section \ref{sss:section1}, we will introduce a general bound on the size of the largest prime factor of an integer; Sections \ref{sss:section2} and \ref{sss:section3} discuss the two cases in terms of the size of $f_r(n/P(n))$ and $f_r((n+1)/P(n+1))$, and conclude with an estimation of $\#R_{(1,-1)}^r(x)$ in each case. Eventually, we absorb Case (i) into Case (ii) and arrive at our final result. We adapt and advance an approach of Pomerance \cite{Pom}, and we are able to improve Erd\Horig{o}s and Pomerance's $O(x/(\log x)^{1-\epsilon})$ \cite{ErPom} by $(\log\log x)^3\log\log\log x/\log x$, hence a refinement to Pomerance's result \cite{Pom}.

\subsubsection{We show that $x^{1/\log\log x}\leq p,q\leq x^{1/2}\log x$.}\label{sss:section1}

\begin{proof}
Since $n+1$ exceeds $x$ only when $n=x$, in general we assume that $n+1\leq x$. Let $p=P(n)$ and $q=P(n+1)$, and write $n=p\cdot k,\ n+1=q\cdot m$. By Lemma \ref{lem:lowerboundofp}, we may assume
\begin{align}
    P(n)\ &>\ x^{1/\log\log x}\ \textrm{and }P(n+1)\ >\ x^{1/\log\log x}.
\end{align}
We know from Lemma \ref{lem:generalizedfnpn} that for all $P(n)\geq 5$, we have
\begin{align}\label{pnrlognlogpn}
    P(n)^r\ \leq f_r(n)\ \leq P(n)^r\cdot\frac{\log n}{\log P(n)}.
\end{align}
In order to apply \eqref{loglogx}, we assume $P(n), P(n+1)\geq 5$, so that \eqref{pnrlognlogpn} holds for both $n$ and $n+1$. Next, we give an upper bound on the size of $p,q$ using the fixed values of $k,m$. We show that given $k,m$, primes $p,q$ are determined uniquely. In fact, from the two equations
\begin{align}\label{pkqmrelation}
    pk+1\ &=\ qm\nonumber\\
    p^r+f_r(k)\ &=\ q^r+f_r(m)
\end{align}
we have
\begin{align}
    p\ &=\ \frac{qm-1}{k}\nonumber\\
   \left(\frac{qm-1}{k}\right)^r-q^r\ &=\ f_r(m)-f_r(k).
    \end{align}
Let
\begin{align}
    g(q)\ &=\ (qm-1)^r-(qk)^r-k^r\cdot(f_r(m)-f_r(k))\nonumber\\
    g'(q)\ &=\ rm(qm-1)^{r-1}-kr(qk)^{r-1}\nonumber\\
    &=\ r(m(qm-1)^{r-1}-k(qk)^{r-1}).
\end{align}
Meanwhile,
\begin{align}
    g\left(\frac{1}{m}\right)\ &=\ -\left(\frac{k}{m}\right)^r-k^r\cdot(f_r(m)-f_r(k)).
\end{align}
Because $q\geq\frac{1}{m}$ and $r\geq 1$, if $m>k$, then from \eqref{pkqmrelation}) we know $p>q$ and $f_r(m)>f_r(k)$; thus, $qm-1>qk$ and $g'(q)>0$ which means $g(q)$ always increases. Moreover, because $g\left(\frac{1}{m}\right)<0$, there exists only one $q$ with $g(q)=0$. Similarly, if $m<k$, then $g(q)$ always decreases and $g\left(\frac{1}{m}\right)>k^r-\left(\frac{k}{m}\right)^r>0$, which also means there exists only one $q>0$ with $g(q)=0$. Therefore, $q$ is uniquely expressible by $k,m$, which means $p,q$ are determined by $k,m$. Thus, the number of choices for $n$ determined by the choices of $k,m$ when $k,m<x^{1/2}/\log x$ is at most $x/(\log x)^2$. Hence, we may assume
\begin{align}
    k\ \geq\ x^{1/2}\log x\ \ \textrm{or }\ m\ \geq\ x^{1/2}\log x.
\end{align}
Because $n=p\cdot k\leq x,\ n+1=q\cdot m\leq x$, we thus may assume
\begin{align}
    p\ \leq\ x^{1/2}\log x\ \ \textrm{or }\ q\ \leq\ x^{1/2}\log x.
\end{align}
Suppose $p>x^{1/2}\log x$, then $q\leq x^{1/2}\log x$. Let $h(x)=x^r/\log x$, then
\begin{align}
h'(x)=\frac{x^{r-1}(r\log x-1)}{(\log x)^2}
\end{align}
Because $r\geq 1$, $h(x)$ increases on $x\geq3$. We have
\begin{align}
    p^r\ \leq\ f_r(n)\ =\ f_r(n+1)\ &\leq\ q^r\cdot\frac{\log (n+1)}{\log q}\nonumber\\
    &\leq\ \frac{(x^{1/2}\log x)^r\cdot \log(n+1)}{\log(x^{1/2}\log x)}\nonumber\\
    &<\ 2\cdot(x^{1/2}\log x)^r\nonumber\\
    p\ &<\ 2^{1/r}\cdot x^{1/2}\log x.
    \end{align}
A similar inequality can be obtained for $q>x^{1/2}\log x$. Therefore, we have
\begin{align}\label{pqx1/2logx}
    p\ <\ 2^{1/r}\cdot x^{1/2}\log x\ \textrm{ and }\  q\ <\ 2^{1/r}\cdot x^{1/2}\log x.
\end{align}
Now we look at $f_r(k)$ and $f_r(m)$. In terms of the upper bound of $f_r(k), f_r(m)$, we have the following two cases:
\begin{align*}
    &\textrm{Case (i): } f_r(k)\ <\ \frac{p^r}{\log^{2r}x},\ f_r(m)\ <\ \frac{q^r}{\log^{2r}x}\\
    &\textrm{Case (ii): (WLOG) }f_r(k)\ \geq\ \frac{p^r}{\log^{2r}x}.
\end{align*}
\end{proof}

\subsubsection{Case (i) Discussion.}\label{sss:section2}

\begin{proof}
We consider Case (i) first. Consider a function $v(x)=(x+1)^r-x^r-1$, then
\begin{align}
v'(x)\
&=\ r((x+1)^{r-1}-x^{r-1})\nonumber\\
&>\ 0,
\end{align}
and the function has a root at $x=0$, which means $(x+1)^r>x^r+1$ for all $x>0$.
Applying this result, we have $p^r+q^r<(p+q)^r$ and $|p-q|^r\ <\ |p^r-q^r|$ since $p\neq q>1$. Meanwhile, by the assumption $f_r(k)<\frac{p^r}{\log^{2r}x},f_r(m)<\frac{q^r}{\log^{2r}x}$, we have
\begin{align}
    |p^r-q^r|\ &=\ |f_r(m)-f_r(k)|\nonumber\\
    &<\ \frac{p^r+q^r}{
    \log^{2r}x}.
\end{align}
Then
\begin{align}
 |p-q|^r\ < \ |p^r-q^r|\ &<\ \frac{p^r+q^r}{\log^{2r}x}\ <\ \frac{(p+q)^r}{\log^{2r}x}.
\end{align}
Therefore,
\begin{align}\label{qrangelog2x+1}
    |p-q|\ &<\ \frac{p+q}{(\log x)^2}\nonumber\\
    p\cdot\frac{(\log x)^2-1}{(\log x)^2+1} \ &<\ q\ <\ p\cdot\frac{(\log x)^2+1}{(\log x)^2-1}.
\end{align}
For all $p$ satisfying \eqref{loglogx}, the number of primes $q$ such that \eqref{qrangelog2x+1} holds is
\begin{align}
    \sum_{p\frac{(\log x)^2-1}{(\log x)^2+1}<q<p\frac{(\log x)^2+1}{(\log x)^2-1}}1\ &\ll\ \pi\left(p\frac{(\log x)^2+1}{(\log x)^2-1}\right)-\pi\left(p\frac{(\log x)^2-1}{(\log x)^2+1}\right).
    \end{align}
We apply an explicit form of the Brun-Titchmarsh theorem \cite{Mont} which states that
\begin{align}
    \pi(x+y)-\pi(x)\ &\leq\ \frac{2y}{\log y}
\end{align}
 for all integers $x\geq1, y\geq 2$. Then, using $p>x^{1/\log\log x}$, we have:
\begin{align}
     \pi\left(p\frac{(\log x)^2+1}{(\log x)^2-1}\right)-\pi\left(p\frac{(\log x)^2-1}{(\log x)^2+1}\right)\ & \leq \ \frac{2p\frac{4(\log x)^2}{\log^4x-1}}{\log \left(p\frac{4(\log x)^2}{\log^4x-1}\right)}\nonumber\\
 &\ll\ O\left(\frac{\frac{p}{(\log x)^2}}{\log p}\right)\nonumber\\
 &<\ O\left(\frac{p}{(\log x)^2\cdot\frac{\log x}{\log\log x}}\right)\nonumber\\
 &<\ O\left(\frac{p\log\log x}{(\log x)^3}\right).
\end{align}
Thus, when $x$ is sufficiently large, using $p<2^{1/r}x^{1/2}\log x$, the number of $q$ satisfying $ p\frac{(\log x)^2-1}{(\log x)^2+1}<q<p\frac{(\log x)^2+1}{(\log x)^2-1}$ is at most
\begin{align}\label{numberofq}
     \sum_{ p\frac{(\log x)^2-1}{(\log x)^2+1}<q<p\frac{(\log x)^2+1}{(\log x)^2-1}}1\
     &=\ O\left(\frac{p\log\log x}{(\log x)^3}\right)\nonumber\\
     &\leq\ O\left(\frac{x^{1/2}\log\log x}{(\log x)^2}\right).
\end{align}
Meanwhile, for sufficiently large $x$, the sum of $\frac{1}{q}$ for such primes $q$ is
\begin{align}\label{sumof1/q}
    \sum_{p\frac{(\log x)^2-1}{(\log x)^2+1}<q<p\frac{(\log x)^2+1}{(\log x)^2-1}}\frac{1}{q}\ &<\ O\left(\frac{p\log\log x}{(\log x)^3}\right)\cdot\frac{1}{p\frac{(\log x)^2-1}{(\log x)^2+1}}\nonumber\\
    &=\ O\left(\frac{\log\log x}{(\log x)^3}\right).
\end{align}
Therefore, in Case (i), using the result from \eqref{numberofq} and \eqref{sumof1/q}, $\#\mathcal{R}^r_{(1,-1)}(x)$ is at most
\begin{align}
    \sum_{\substack{ p\frac{(\log x)^2-1}{(\log x)^2+1}<q<p\frac{(\log x)^2+1}{(\log x)^2-1}\\x^{1/\log\log x}<p<2^{1/r}x^{1/2}\log x}}\left(1+\left\lfloor\frac{x}{pq}\right\rfloor\right)\ &<\ \left(\frac{x^{1/2}\log\log x}{(\log x)^2}\right)\cdot\pi\left(2^{1/r}x^{1/2}\log x\right)+\sum\left\lfloor\frac{x}{pq}\right\rfloor\nonumber\\
    &\ll\ \left(\frac{x^{1/2}\log\log x}{(\log x)^2}\right)\cdot\left(x^{1/2}\right)+ \frac{x\log\log x}{(\log x)^3}\sum_{\substack{x^{1/\log\log x}<\\p<2^{1/r}x^{1/2}\log x}}\frac{1}{p}\nonumber\\
    &<\ O\left(\frac{x\log\log x}{(\log x)^2}\right)+O\left(\frac{x(\log\log x)^2}{(\log x)^3}\right)\ \textrm{(Lemma \ref{lem:plnp})}\nonumber\\
    &=\ O\left(\frac{x\log\log x}{(\log x)^2}\right).
    \end{align}
    \end{proof}

\subsubsection{Case (ii) Discussion}\label{sss:section3}
    \begin{proof}
    Now we consider Case (ii). Write $k=t\cdot u$, where $t=P(k)$, and we have
\begin{align}
    p^r\ \leq\ f_r(n)\ =\ f_r(n+1)\ &\leq\ q^r\cdot\frac{\log (n+1)}{\log q}\nonumber\\
    &\leq\ q^r\cdot\frac{\log (x+1)}{\log q}\nonumber\\
    &\leq\ q^r\cdot\frac{\log (x+1)}{\log x^{1/\log\log x}}\nonumber\\
    &\leq\ q^r\cdot 2\log\log x.
\end{align}
Similarly,
\begin{align}
    q^r\ \leq\ f_r(n+1)\ =\ f_r(n)\ &\leq\ p^r\cdot\frac{\log n}{\log p}\nonumber\\
    &\leq\ p^r\cdot\frac{\log x}{\log p}\nonumber\\
    &\leq\ p^r\cdot\log\log x.
\end{align}
Therefore, we have
\begin{align}
\frac{p}{2^{1/r}(\log\log x)^{1/r}}\ \leq\ q\ \leq\ p\cdot(\log\log x)^{1/r}.
\end{align}
Using the result from Lemma \ref{lem:lowerboundofp} again, the number of $k$ for which we don't have $t>k^{1/\log\log k}$ is $O(k/(\log k)^2)$; thus, we may assume $t>k^{1/\log\log k}$ and $t\geq 5$. Applying results from Lemma \ref{lem:generalizedfnpn},
\begin{align}\label{frk}
\frac{p^r}{\log^{2r}x}\ \leq \ f_r(k)\ &\leq\ t^r\cdot\frac{\log k}{\log t}\nonumber\\
&\leq t^r\cdot\log\log k\nonumber\\
&<\ t^r\cdot\log\log x.
\end{align}
We get that
\begin{align}
\frac{p}{(\log x)^2(\log\log x)^{1/r}} \ < \ t\ \leq\ p.
\end{align}
This result implies that $P(n/P(n))$ and $P(n)$ are relatively close. Moreover, in terms of the size of $P(n)$, we have the following two cases:
\begin{align*}
&\textrm{Case (ii.1): } p\ \leq\ x^{1/3}\\
&\textrm{Case (ii.2): }p\ >\ x^{1/3}.
\end{align*} 

\ \\

\noindent \emph{Consider Case (ii.1) first.} The number of $n\leq x$ with $pt|n$ and $q|n+1$ in this case is at most
\begin{align}\label{thelengthyequation}
   & \sum_{\substack{x^{1/\log\log x}<p\leq x^{1/3}\\p/(2^{1/r}(\log\log x)^{1/r})<\\q<p(\log\log x)^{1/r}\\p/((\log x)^2(\log\log x)^{1/r})<t\leq p}}\left(1+\left\lfloor\frac{x}{ptq}\right\rfloor\right)\nonumber\\
   & \ll\ \pi\left(x^{1/3}\right)\cdot \pi\left(x^{1/3}(\log\log x)^{1/r}\right)\cdot\pi\left(x^{1/3}\right)+ \sum\left\lfloor\frac{x}{ptq}\right\rfloor\nonumber\\
    &\ll\ O\left(\frac{x^{1/3}}{\log x}\cdot\frac{x^{1/3}\log\log x}{\log x}\cdot\frac{x^{1/3}}{\log x}\right)+\sum\left\lfloor\frac{x}{ptq}\right\rfloor\nonumber\\
  & <\ O\left(\frac{x\log\log x}{(\log x)^3}\right)+ x\sum\frac{1}{p}\sum\frac{1}{t}\sum\frac{1}{q}\nonumber\\
     & \ll \ O\left(\frac{x\log\log x}{(\log x)^3}\right)\nonumber + O\left(x\sum\frac{1}{p}\sum\frac{1}{t}\cdot\frac{\log\log\log x}{\log p}\right)\ \textrm{(Lemma \ref{lem:eupevandu0pv0})}\nonumber\\
    &\ll\ O\left(\frac{x\log\log x}{(\log x)^3}\right)+ O\left(x\log\log\log x\sum\frac{1}{p\log p}\sum\frac{1}{t}\right)\nonumber\\
    &\ll\ O\left(\frac{x\log\log x}{(\log x)^3}\right)\nonumber+O\left(
     x\log\log\log x\sum\frac{1}{p\log p}\cdot\frac{\log\log x}{\log p}\right)\nonumber\\
     &\leq\ O\left(\frac{x\log\log x}{(\log x)^3}\right)+O\left(\frac{x\log\log\log x(\log\log x)^3}{(\log x)^2}\right)\ \textrm{(Lemma \ref{lem:eupevandu0pv0})}\nonumber\\
     &=\ O\left(\frac{x\log\log\log x(\log\log x)^3}{(\log x)^2}\right).
\end{align}
Recall that in Case (i), $\#\mathcal{R}^r_{(1,-1)}(x)=O\left(\frac{x\log\log x}{(\log x)^2}\right)$, which means we can absorb Case (i) into error estimate. Now we turn to Case (ii.2), where $p>x^{1/3}$. We have
\begin{align}
    q^r\ &\leq\ p^r\cdot\frac{\log x}{\log p}\nonumber\\
    &<\ 3\cdot p^r.
\end{align}
Meanwhile, because $p>x^{1/\log\log x}$, when $x$ is sufficiently large we have
\begin{align}
    p\ &\leq\ q\cdot\frac{\log (x+1)}{\log q}\nonumber\\
    \log p\ &\leq\ \log q+\log\frac{\log (x+1)}{\log p}\nonumber\\
    &\leq\ \log q+2\log\log\log x\nonumber\\
    &\leq\ \log q+\frac{1}{2}\log x^{1/\log\log x}\nonumber\\
    &\leq\ \frac{3}{2}\cdot\log q.
\end{align}
Therefore, we have
\begin{align}
    p^r\ &\leq\ q^r\cdot\frac{\log (x+1)}{\log q}\nonumber\\
   &\leq\ q^r\cdot\frac{\log (x+1)}{2\log p/3}\nonumber\\
   &<\ q^r\cdot\frac{9\log (x+1)}{2\log x}\nonumber\\
    &<\ 6q^r.
\end{align}
Therefore, $p/6^{1/r}<q<3^{1/r}p$. Moreover, recall \eqref{frk}; we have
\begin{align}
    \frac{p^r}{\log^{2r}x} \ \leq\ f_r(k)\ &\leq\ t^r\cdot\frac{\log k}{\log t}.
\end{align}
Suppose $t<p^{1/2}$, then the number of $n\leq x$ is at most
\begin{align}
    \sum_{\substack{t<p^{1/2}\\x^{1/3}<p<2^{1/r}x^{1/2}\log x}}1\ &\ll\ \pi\left(\left(2^{1/r}x^{1/2}\log x\right)^{1/2}\right)\pi\left(2^{1/r}x^{1/2}\log x\right)\nonumber\\
    &\ll\ \frac{x^{1/4}\log^{1/2}x}{\log x}\cdot\frac{x^{1/2}\log x}{\log x}\nonumber\\
    &=\ O\left(\frac{x^{3/4}}{(\log x)^{1/2}}\right),
\end{align}
which we can absorb into error estimate.
Therefore, $t>p^{1/2}$. Using $p>x^{1/3}$, we have
\begin{align}
    \frac{p^r}{\log^{2r}x}\ &\leq t^r\cdot\frac{\log k}{\log t}\nonumber\\
    &<\ t^r\cdot\frac{2\log x}{\log p}\nonumber\\
     \frac{p}{(\log x)^2}\ &<\ t\cdot\left(\frac{2\log x}{\log x/3}\right)^{1/r}\nonumber\\
    p\ &<\ t\cdot 6^{1/r}(\log x)^2.
\end{align}
Therefore, we have
\begin{align}
    \frac{p}{6^{1/r}(\log x)^2}\ <\ t\ <\ p.
\end{align}
Recall $f_r(k)=t\cdot u$ and $p/6^{1/r}<q<3^{1/r}p$. If $p\geq x^{2/5}$, then
\begin{align}\label{boundofum}
    u\ \leq\ \frac{x}{pt}\ \leq\ \frac{6^{1/r}x(\log x)^2}{p^2}\ &=\ O(x^{1/5}(\log x)^2)\nonumber\\
    m\ \leq\ \frac{x}{q}\ \ll\ \frac{x}{p}\ &\leq\ x^{3/5},
\end{align}
which means that the number of uniquely determined $n$ is at most $O(x^{4/5}(\log x)^2)$, a number absorbable by error estimate. Therefore, we need to consider only
\begin{align}
    x^{1/3}\ <\ p\ <\ x^{2/5}.
\end{align}
Now, recall that $k=t\cdot u$. Let $u=v\cdot w$, where $v=P(u)$. We have the following equations: 
\begin{align}
    p\cdot k+1\ &=\ q\cdot m\nonumber\\
    p^r+f_r(k)&=\ q^r+f_r(m).
\end{align}
Then we have 
\begin{align}
    p^r+t^r+f_r(u)\ &=\ \left(\frac{pk+1}{m}\right)^r+f_r(m)\nonumber\\
   (upt+1)^r- (mp)^r-(mt)^r\ &=\ m^r(f_r(u)-f_r(m)).
\end{align}
When $r=1$, Pomerance \cite{Pom} demonstrated the following.
\begin{align}\label{r=1factorization}
    upt-mp-mt\ &=\ m(f(u)-f(m))-1\nonumber\\
    (up-m)(ut-m)\ &=\ um(f(u)-f(m))-u+m^2.
\end{align}
Given $u,m$, the number of choices of $t$, and thus for $n$, is determined by the number of divisors of $um(f(u)-f(m))-u+m^2$, and is at most 
\begin{align}
    \tau\left(um(f(u)-f(m))-u+m^2\right)\ &\leq\ x^{o(1)},
\end{align}
where $\tau(x)$ denotes the divisor function. Let $m=s\cdot z$, where $s=P(m)$. Recall $t=P(k)$. We first show that the number of $n$ with $t,s\leq x^{1/6}$ can be absorbed into error estimate. In fact, the number of triples $p,q,t$ is at most $O(x^{2/5}\cdot x^{2/5}\cdot x^{1/6})=O(x^{29/30})$, and since $p,q,t\geq x^{1/\log\log x}\gg x/(\log x)^2$, the number of $n$ up to $x$ for which $t,s\leq x^{1/6}$ is at most $O(x^{29/30}(\log x)^2)$. Thus, we may assume that at least one of $s,t$ is greater than $x^{1/6}$. \\
\indent  Recall $u=v\cdot w$ where $v=P(u)$. First, we consider $v> x^{1/6}$. We can rewrite \eqref{r=1factorization} as 
\begin{align}
    (vwp-m)(vwt-m)\ &=\ wmv^2+\left((f(w)-f(m))mw-w\right)v+m^2.
\end{align}
Pomerance \cite{Pom} proved the following. 
\begin{lem}\label{lem:quadratic}
Let $A,B,C$ be integers with $\gcd(A,B,C)=1$, $D:=B^2-4AC\neq 0$, and $A\neq 0$. Let $M_0$ be the maximum value of $|At^2+Bt+C|$ on the interval $[1,x]$. Let $M=\max\{M_o,|D|,x\}$ and let $\mu=\lceil{\log M/\log x}\rceil$ and assume $\mu\leq \frac{1}{7}\log\log x$, then 
\begin{align}
    \sum_{n\leq x}\tau\left(|An^2+Bn+C|\right)\ &\leq\ x(\log x)^{2^{3\mu+1}+4}
\end{align}
holds uniformly for $x\geq x_0$, where $x_0$ is an absolute constant. 
\end{lem}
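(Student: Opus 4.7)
The plan is to estimate the sum $\sum_{n \leq x} \tau(|An^2+Bn+C|)$ by switching the order of summation and, for each integer $d \geq 1$, counting the number of $n \leq x$ with $d$ dividing $An^2 + Bn + C$. This count is at most $(\lfloor x/d \rfloor + 1)\rho(d)$, where $\rho(d)$ denotes the number of residues $n \pmod{d}$ solving $An^2 + Bn + C \equiv 0 \pmod{d}$. The hypothesis $M_0, |D| \leq M \leq x^\mu$ ensures every divisor of $|An^2+Bn+C|$ with $n \leq x$ is bounded by $x^\mu$, so the outer sum is effectively over divisors of controlled degree in $x$.

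First I would establish the root-counting estimate for $\rho$. By the Chinese Remainder Theorem $\rho$ is multiplicative, so it suffices to bound $\rho$ on prime powers. For primes $p \nmid 2AD$, the congruence $An^2 + Bn + C \equiv 0 \pmod{p^k}$ has at most two solutions by Hensel's lemma, while for the finitely many primes $p \mid 2AD$ a cruder bound in terms of the $p$-adic valuations of $A$ and $D$ applies; the overall multiplicative contribution of these exceptional primes is absorbed into the allowance $M \geq |D|$. This yields the Euler-product estimate
\begin{align*}
\sum_{d \leq y} \frac{\rho(d)}{d} \ \ll\ (\log y)^{C_1}
\end{align*}
for an absolute constant $C_1$, uniformly in the coefficients.

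Next I would implement an iterated hyperbola decomposition. Writing $\tau(m) = 2\sum_{d \mid m,\, d \leq \sqrt{m}} 1 + O(1)$ pairs each large divisor with a small cofactor. A single split is insufficient because $|An^2+Bn+C|$ may be as large as $x^\mu$; instead, divisors are partitioned dyadically into roughly $\mu$ ranges, and within each range the divisor count on one side is rewritten as a cofactor sum on the other side via the substitution $d \mapsto |An^2+Bn+C|/d$. Each reduction reapplies $\tau(ab) \leq \tau(a)\tau(b)$ together with the Euler product estimate, at the cost of squaring the accumulated exponent of $\log x$. After $O(\mu)$ such reductions the exponent grows to $2^{O(\mu)}$, and explicit constant chasing yields the stated $2^{3\mu+1} + 4$.

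The hard part is the careful bookkeeping of constants through the $\mu$ iterations, in particular controlling the error from boundary divisors straddling the dyadic cuts and from primes $p \mid D$ contributing disproportionately to $\rho$. The hypothesis $\mu \leq \tfrac{1}{7}\log\log x$ is precisely the margin that keeps $(\log x)^{2^{3\mu+1}+4}$ sub-polynomial in $x$ and allows the accumulated secondary error terms (each of order $x^{1-1/(2\mu)}$ times polylog factors per stage) to be absorbed into the main term $x(\log x)^{2^{3\mu+1}+4}$. The estimate itself is elementary but requires tight control at every step, and the restriction on $\mu$ is exactly what makes this control possible.
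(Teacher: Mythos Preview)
The paper does not actually prove this lemma; it is quoted verbatim as a result of Pomerance \cite{Pom} and used as a black box in the Case~(ii.2) analysis. So there is no ``paper's own proof'' to compare against here.

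That said, your sketch is a faithful outline of Pomerance's argument in \cite{Pom}: the root-counting function $\rho(d)$, its multiplicativity, the bound $\rho(p^k)\le 2$ for $p\nmid 2AD$, and the recursive hyperbola reduction that halves the divisor range at each step while squaring the accumulated $\log x$ exponent are exactly the ingredients Pomerance uses. One point to tighten: your description conflates two slightly different mechanisms. You write that divisors are ``partitioned dyadically into roughly $\mu$ ranges'' and then that ``after $O(\mu)$ such reductions the exponent grows to $2^{O(\mu)}$.'' In Pomerance's actual argument the recursion is not a flat partition into $\mu$ pieces but a genuine halving of the exponent range (from $x^\mu$ down toward $x$), and the $\tau(ab)\le\tau(a)\tau(b)$ step at each level forces a Cauchy--Schwarz-type squaring of the bound, so that the $\log$-exponent does indeed grow like $2^{c\mu}$ after $O(\mu)$ levels. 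Your sketch has the right ingredients and the right final shape, but as written it would benefit from making the recursion explicit rather than describing it as a one-shot dyadic partition. As you acknowledge, the constant $2^{3\mu+1}+4$ comes only from careful bookkeeping; the strategy is sound.
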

In our case, let $A=wm,B=\left((f(w)-f(m))mw-w\right), C=m^2$. Both $\gcd(A,B,C)=1$ and $D\neq 0$ are easily verified since $\gcd(w,m)=1$. Moreover, recall that $t>\frac{p}{6^{1/r}(\log x)^2}$, we have
\begin{align}
u\ \leq\ \frac{x}{pt}\ \leq\ \frac{6x(\log x)^2}{p^2}\ &\leq\ 6x^{1/3}(\log x)^2\nonumber\\
w\ =\ \frac{u}{v}\ \leq\ \frac{u}{x^{1/6}}\ &\leq\ 6x^{1/6}(\log x)^2\nonumber\\
v\ &\leq\ \frac{6x^{1/3}(\log x)^2}{w}\nonumber\\
    m\ \leq\ \frac{x}{q}\ &\ll\ x^{2/3}. 
\end{align}
Then $M_0\ll x^{4/3}(\log x)^2$. It follows from the lemma that 
\begin{align}\label{quadsum}
\sum_{v\leq (6x^{1/3}(\log x)^2)/w}\tau(|Av^2+Bv+C|)\ &\ll\ \frac{6x^{1/3}(\log x)^c}{w},
\end{align}
where $c$ is a positive constant. Moreover, if $x^{1/3}<p\leq x^{1/3}(\log x)^{c+5}$, $\#\mathcal{R}_{(1,-1)}(x)$ is at most 
\begin{align}
    \sum_{\substack{x^{1/3}<p\leq x^{1/3}(\log x)^{c+5}\\p/6<q<3p}}\left(1+\frac{x}{pq}\right)\ &\ll\ O\left(x^{2/3}(\log x)^{2c+10}\right)+x\sum_{x^{1/3}<p\leq x^{1/3}(\log x)^{c+5}}\frac{1}{p}\sum_{p/6<q<3p}\frac{1}{q}\nonumber\\
    &\ll\ O(x^{2/3}(\log x)^{2c+10})+O\left(\frac{x\log\log x}{(\log x)^2}\right)\nonumber\\
    &\ll\ O\left(\frac{x\log\log x}{(\log x)^2}\right),
\end{align}
which is small enough to be absorbed by error estimate. If $p>x^{1/3}(\log x)^{c+5}$, then $m\ll\frac{x}{p}\ll x^{2/3}/(\log x)^{c+5}$ and $w\leq \frac{u}{v}\leq \frac{u}{x^{1/6}}\leq 6x^{1/6}/(\log x)^{2c+8}$. Then summing \eqref{quadsum} over all choices of $w,m$ the quantity is less than $O(x/(\log x))^2$, which is indeed negligible. \\
\indent Finally, recall that $m=s\cdot z$ where $s=P(m)$. We consider the case where $s> x^{1/6}$. From \eqref{r=1factorization} we have 
\begin{align}\label{m}
    (pu-sz)(tu-sz)\ &=\ (z^2-uz)s^2+(f(u)-f(z))uzs-u.
\end{align}
Similarly, let $p\geq x^{1/3}(\log x)^{c+5}$, then $u\leq\frac{x}{pt}\leq\frac{6x(\log x)^2}{p^2}\leq x^{1/3}/(\log x)^{2c+8}$. Moreover, 
\begin{align}
    z\ \leq\ \frac{x}{qs}\ \ll\ \frac{x^{2/3}}{x^{1/6}}\ &\ll\ x^{1/2}\nonumber\\
    s\ \leq\ \frac{m}{z}\ &\ll\ x^{2/3}/z.
\end{align}
Therefore, considering the ranges of $s,z,u$, the right hand side of \eqref{m} has less than $O(x/(\log x)^2)$ choices, suggesting that this case is also negligible. \\
\indent Recall Lemma \ref{thm:xdeltapn} and Lemma \ref{thm:1minusepsilon}: when $r\geq 1$, $P(n)^r$ is the dominating term of $f_r(n)$, and the larger $r$ is, the closer $f_r(n)$ is to $P(n)^r$; meanwhile, $P(n)$ and $P(n+1)$ are usually at least a factor of $x^{\delta}$ apart, suggesting the increasing rarity of $r-$th power \textit{Ruth-Aaron} numbers as $r$ increases. Therefore, although we are unable to generalize the quadratic formula $|Ax^2+Bx+C|$ to a higher power\footnote{For example, when $r=2$, we have a factorization similar to \eqref{r=1factorization}: $\left(u^2pt+u-m^2+ump-umt\right)\cdot\left(u^2pt+u-m^2-ump+umt\right)=\ (um)^2(f_2(u)-f_2(m))+m^2(m^2-2u)$. However, to tackle $r=2$ with an approach similar to the one introduced in this section, a result for the summation of divisors of quartic functions will appear necessary.} at this point, we can substantiate firmly that when $r\geq 1$, the number of $r-$th power \textit{Ruth-Aaron} numbers up to $x$ with $x^{1/3}<p<x^{2/5}$ is much less than the estimated result followed by Lemma \ref{lem:quadratic}. Therefore, when $r\geq 1$, 
\begin{align}
    \#\mathcal{R}_{(1,-1)}^r(x)\ &=\ O\left(\frac{x(\log\log x)^3(\log\log\log x)}{(\log x)^2}\right).
\end{align}
\end{proof}
\begin{rek}
In our proof of Theorem \ref{thm:generalizednumberofn}, one crucial result we applied is that of De Bruijn \cite{Bru} in Lemma \ref{lem:lowerboundofp}. This result states that for all but $O(x/(\log x)^2)$ of $n\leq x$ we have $P(n),P(n+1)>x^{1/\log\log x}$, lays the ground work for major arguments in the rest of the proof. It serves as a major lower bound for $p$, and helps us obtain a tighter bound on $q$ and $t$ in relation to $p$, which we relied on when counting the number of $n$ up to $x$.
\end{rek}

We present a table of $2-$nd power \textit{Ruth-Aaron} numbers below $5\cdot 10^7$:
\begin{table}[H]
    \centering
    \begin{tabular}{cc}
    \hline
     $n$   &  $f_2(n)=f_2(n+1)$ \\
     \hline
      6371184   & 40996\\
      16103844 & 22685\\
      49214555 & 103325\\
      \hline
    \end{tabular}
    \caption{$2-$nd power \textit{Ruth-Aaron} numbers not exceeding $5\cdot10^7$.}
    \label{tab:my_label}
\end{table}

\section{Proof of Theorem \ref{thm:infinitudeofr>0}}\label{sec:infn}
In order to prove Theorem \ref{thm:infinitudeofr>0}, we need to first introduce a special case of the Catalan Conjecture which we can prove directly.

\begin{thm}[Catalan Conjecture]
The only natural number solution of
\begin{align}
    a^x-b^y\ &=\ 1
\end{align}
for $a,b>1$ is $(2,3)$, where $a=3,b=2$.
\end{thm}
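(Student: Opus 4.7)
The statement as written coincides with Mihailescu's theorem (2002), whose full proof uses deep tools from the theory of cyclotomic fields and is far from ``direct.'' The authors' prefatory remark that this is ``a special case of the Catalan Conjecture which we can prove directly'' signals that only a restricted version is actually needed downstream for Theorem~\ref{thm:infinitudeofr>0}, namely the uniqueness of $(a,x,b,y) = (3,2,2,3)$ among coprime bases $a,b > 1$ with both exponents at least $2$. My plan is therefore to give an elementary proof of this restricted statement and to indicate where Mihailescu's theorem is invoked for the remaining configurations.

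First I would dispose of the boundary cases. If $x = 1$ then $a = b^y + 1$, and the only way this is compatible with the hypothesis that both sides be prime powers of small bases is $a = 3$, $b = y = 1$-type configurations, which are directly checked. The case $y = 1$ is symmetric. Also, since $a^x - b^y = 1$ forces $\gcd(a,b) = 1$, we may work with coprime bases throughout. The genuinely nontrivial regime is thus $x,y \geq 2$ with $(a,b) = (3,2)$ or $(2,3)$.

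The core calculation is for $3^x - 2^y = 1$ with $x, y \geq 2$. Reduce modulo $8$: for $y \geq 3$ we have $3^x \equiv 1 \pmod 8$, and since $3^x \bmod 8$ cycles through $3, 1, 3, 1, \dots$ this forces $x$ to be even. Writing $x = 2m$ yields the factorization
\begin{equation*}
2^y \ = \ 3^{2m} - 1 \ = \ (3^m - 1)(3^m + 1).
\end{equation*}
The two factors on the right differ by $2$, so their $\gcd$ divides $2$; since their product is a power of $2$ and both factors are even, the smaller factor $3^m - 1$ must equal $2$, giving $m = 1$ and hence $(x,y) = (2,3)$. The boundary case $y = 2$ gives $3^x = 5$, which has no solution, and the trivial case $y = 1$ gives $(1,1)$, already listed. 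A parallel argument modulo $3$ handles $2^x - 3^y = 1$: the left side is $\equiv (-1)^x \pmod 3$, which must equal $1$, forcing $x$ even, and the factorization $3^y = (2^{x/2}-1)(2^{x/2}+1)$ then pins down $x = 2$, $y = 1$.

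The principal obstacle is that these elementary congruence and factorization arguments do not extend to bases $a, b$ outside $\{2,3\}$ with both exponents $\geq 3$; this was precisely the last open case of the general Catalan conjecture and required Mihailescu's cyclotomic machinery. The plan is therefore to be explicit that only the $(3,2)$ subcase above is used in Theorem~\ref{thm:infinitudeofr>0}, and to cite Mihailescu for the remaining configurations subsumed by the displayed statement. In practice this means the ``proof'' of the theorem as stated is a two-line reference to \cite{Rib} accompanied by the self-contained elementary verification above for the single case the paper actually needs.
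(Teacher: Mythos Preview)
Your reading of the situation is accurate: the paper does not prove the displayed Catalan Conjecture at all. Immediately after stating it, the paper simply writes ``The Catalan Conjecture was proved by Mih\u{a}ilescu in 2002'' and then passes to a separate lemma (Lemma~\ref{lem:applyingfnpn}) establishing only the special case actually needed, namely that the largest $n$ with $P(n),P(n+1)\leq 3$ is $8$. Your proposal to cite Mih\u{a}ilescu for the general statement and supply an elementary argument for the $\{2,3\}$-base case is therefore exactly the paper's strategy.

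Where the two diverge is in the elementary argument for $3^x-2^y=1$. You reduce modulo $8$ to force $x$ even and then factor $2^y=(3^m-1)(3^m+1)$, using that the two factors differ by $2$ to pin down $m=1$. The paper instead works with the multiplicative order $d$ of $2$ modulo $3^b$: from $2^a\equiv -1\pmod{3^b}$ it deduces $d\mid 2a$, $d\nmid a$, and $d\mid 2\cdot 3^{b-1}$, concluding $a=3^k$; it then evaluates $2^{3^k}+1$ modulo $9$ via the factorization $2^{3^k}+1=(2^{3^{k-1}}+1)(2^{2\cdot 3^{k-1}}-2^{3^{k-1}}+1)$ to show the second factor is $\equiv 3\pmod 9$ and hence not a higher power of $3$. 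Your route is shorter and more symmetric with the $2^x-3^y=1$ case (which both you and the paper handle identically, via parity of $x$ modulo $3$ and the factorization $(2^{x/2}-1)(2^{x/2}+1)$). The paper's order-based argument is more circuitous but has the minor advantage of not needing the case split on $y\leq 2$ versus $y\geq 3$.
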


The Catalan Conjecture was proved by Mihăilescu in 2002.

\begin{lem}[Special case of the Catalan Conjecture]\label{lem:applyingfnpn}
The number of $n\leq x$ with $P(n)\leq 3$ is $O((\log x)^2)$. Meanwhile, the largest $n$ with $P(n),P(n+1)\leq 3$ is 8.
\end{lem}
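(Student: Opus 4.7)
For the counting bound, the plan is immediate: an integer $n$ with $P(n)\le 3$ must be of the form $n = 2^a 3^b$ for nonnegative integers $a,b$, and the constraint $n \le x$ becomes $a\log 2 + b\log 3 \le \log x$. The number of lattice points $(a,b)$ with $a,b \ge 0$ satisfying this linear inequality is bounded by $(1+\log_2 x)(1+\log_3 x) = O((\log x)^2)$, which gives the first assertion.

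For the second claim, my plan is to exploit coprimality. If both $P(n) \le 3$ and $P(n+1) \le 3$, then both $n$ and $n+1$ are of the form $2^a 3^b$; since $\gcd(n,n+1) = 1$, neither of the primes $2$ and $3$ can divide both of them. This forces the pair $\{n, n+1\}$ to consist of one pure power of $2$ (possibly $1$) and one pure power of $3$ (possibly $1$). Hence the problem reduces to finding all nonnegative integer solutions of $2^a - 3^b = \pm 1$.

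To solve these two exponential Diophantine equations without invoking the full Mihăilescu theorem, I would first handle small cases by direct enumeration, producing the pairs $(n,n+1) \in \{(1,2),(2,3),(3,4),(8,9)\}$, and then rule out large solutions with elementary modular arguments. For $3^b - 2^a = 1$ with $a \ge 4$, reduction modulo $8$ forces $b$ to be even, say $b = 2c$; then the factorization $(3^c-1)(3^c+1) = 2^a$ combined with $\gcd(3^c-1, 3^c+1) = 2$ forces both factors to be powers of $2$ differing by $2$, leaving only $c=1$. Symmetrically, for $2^a - 3^b = 1$ with $b \ge 3$, reduction modulo $9$ forces $6 \mid a$, and then $(2^{a/2}-1)(2^{a/2}+1) = 3^b$ forces both factors to be powers of $3$ differing by $2$, leaving only $a=2$. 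Thus the complete list of $n$ is $\{1,2,3,8\}$, and the largest is $n=8$.

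The main obstacle, such as it is, is the case analysis for the Diophantine equations; the modular-factorization arguments are routine but must be performed carefully to cover the boundary cases where $a=0$ or $b=0$ (i.e., when one of the two numbers equals $1$). The counting bound in the first part is immediate from the two-dimensional lattice estimate, and the reduction to $2^a - 3^b = \pm 1$ in the second part means no appeal to Catalan's conjecture in its full generality is required.
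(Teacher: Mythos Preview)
Your proof is correct. The counting bound and the coprimality reduction to $2^a - 3^b = \pm 1$ match the paper exactly, as does your treatment of $2^a - 3^b = 1$: the paper likewise observes that $a$ must be even (it uses mod $3$ rather than your mod $9$, but either suffices) and then factors $2^a-1=(2^{a/2}-1)(2^{a/2}+1)$ into coprime powers of $3$. Where you diverge is the case $3^b - 2^a = 1$. The paper argues via the multiplicative order of $2$ modulo $3^b$, deduces that $a$ must be a power of $3$, then factors $2^{3^k}+1 = (2^{3^{k-1}}+1)(2^{2\cdot 3^{k-1}}-2^{3^{k-1}}+1)$ and checks the second factor modulo $9$ to force it to equal $3$. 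Your route---reduce modulo $8$ to force $b$ even, then factor $3^b-1 = (3^{b/2}-1)(3^{b/2}+1)$ into two powers of $2$ differing by $2$---is shorter and pleasingly symmetric with the other case. Both arguments are elementary and avoid the full Mih\u{a}ilescu theorem; yours simply gets there with less machinery.
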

\begin{proof}
First, we prove that the number of $n$ up to $x$ with $n=2^a\cdot 3^b$, where $a,b$ are nonnegative integers, is $O((\log x)^2)$. In fact, since the number of powers of $2$ and $3$ no greater than $x$ are $\log_2x$ and $\log_3x$ respectively, there are at most $\log_2x\cdot \log_3x=O((\log x)^2)$ number of $n\leq x$ with $n=2^a\cdot 3^b$. Meanwhile, because $\gcd(n,n+1)=1$, when $P(n),P(n+1)\leq 3$, either $n=2^a,n+1=3^b$, or $n=3^b,n+1=2^a$. \\
\indent Case (i): $3^b=2^a+1$. Let the order of $2$ modulo $3^b$ be $d$. We have,
\begin{align}
    2^a\ &\equiv\ -1\ (\textrm{mod }3^b)\nonumber\\
    2^{2a}\ &\equiv\ 1\ (\textrm{mod }3^b)\nonumber\\
    2^d\ &\equiv\ 1\ (\textrm{mod }3^b).
\end{align}
Then we have
\begin{align}\label{d}
    d\ &|\ \varphi(3^b)\ =\ 2\cdot 3^{b-1}\nonumber\\
    d\ &\nmid\ a,\ d\ |\ 2a.
\end{align}
It's obvious that $2\ |\ d$, then \eqref{d} tells us that $d=2a$ (or $d=2$, then $(a,b)=(2,1)$ is the only solution), and that $d=2\cdot 3^k$ for some nonnegative integer $k\leq b-1$. Thus, $a=3^k$. When $k=0$, $(a,b)=(1,1)$; when $k\geq 1$, we have
\begin{align}
    2^{3^k}+1\ &=\ (2^{3^{k-1}}+1)(2^{2\cdot 3^{k-1}}-2^{3^{k-1}}+1)\nonumber\\
    2^{2\cdot 3^{k-1}}-2^{3^{k-1}}+1\ &\equiv\ \left((-1)^{2}\right)^{3^{k-2}}-(-1)^{3^{k-2}}+1\nonumber\\
    &\equiv\ 1+1+1\nonumber\\
    &\equiv\ 3\ (\textrm{mod }9),
\end{align}
which means $2^{3^k}+1$ is a power of $3$ only when $2^{2\cdot 3^{k-1}}-2^{3^{k-1}}+1$ is $3$. Thus, the largest solution to $3^b=2^a+1$ is $(a,b)=(3,2)$. \\
\indent Case (ii): $2^a=3^b+1$. Then $2^a\equiv 1\ (\textrm{mod }3)$, which means $2|a$, since $2$ is the order of $2$ modulo $3$; thus, let $a=2a_0$, where $a_0$ is a nonnegative integer. Then $2^a-1=(2^{a_0}+1)(2^{a_0}-1)$. Because $2^{a_0}-1$ and $2^{a_0}+1$ are relatively prime unless $a_0=0$ or $1$, the largest solution in this case is $(a,b)=(2,1)$. Therefore, the largest $n$ with $P(n),P(n+1)\leq 3$ is $8$.
\end{proof}
Now recall Theorem \ref{thm:infinitudeofr>0}.
\infinitude*
\begin{proof}
It suffices to show there exists an infinite decreasing series of positive $r$. Let $n$ be of the form $n=p^2-1$, where $p$ is a prime. Then $n+1=p^2$. We claim that for any $n_0$ that is an $r_0-$th power \textit{Ruth-Aaron} number, we can always find $n>n_0$ that is an $r-$th power \textit{Ruth-Aaron} number where $0<r<r_0$. \\
\indent We define a function of $r$ for any fixed $n$:
\begin{align}
    g_n(r)\ &=\ f_r(n+1)-f_r(n).
\end{align}
We want to show that $g_n(r)$ increases on $r>0$ and has one and only one root for a fixed $n$. We will use the function's continuity to show that the equation has at least one root, and its monotonicity and values at $r=0$ and $r=1$ to prove that the function has one and only one root, and it's between $0$ and $1$. First, we have for prime $q$,
\begin{align}
    g_n(r)\ &=\ f_r(n+1)-f_r(n)\nonumber\\
    &=\ 2\cdot p^r-\sum_{q|p^2-1} q^r\nonumber\\
    &=\ 2\cdot p^r-2\cdot 2^r-\sum_{q|(p^2-1)/4}q^r.
\end{align}
It's obvious that $g_n(r)$ is continuous. Meanwhile,
\begin{align}
    g_n(0)\ &=\ 2-2-\sum_{q|(p^2-1)/4}1\nonumber\\
    &\ <0.
\end{align}
Because the function $\frac{x}{\log x}$ increases on $x\geq e$, from Lemma \ref{lem:applyingfnpn}, for all except $O((\log x)^2)$ number of $n\leq x$, $f_1(n)\leq P(n)\log n/\log P(n)\leq n$ holds. Meanwhile, for any $n$ of the form $2^a\cdot 3^b$, since there are $O(x/\log x)$ different primes up to $x$, we will drop such an $n$ and consider the next prime $p$ and the respective $n=p^2-1$. We thus have
\begin{align}
    g_n(1)\ &=\ 2p-f_1(p-1)-f_1(p+1)\nonumber\\
    &>\ 2p-(p-1)-(p+1)\nonumber\\
    &=\ 0.
\end{align}
Therefore, due to the continuity of function $g_n(r)$, for any fixed $n$, there must exist at least one $r$ with $f_r(n+1)=f_r(n)$. We proceed to prove that $g_n(r)>0$ on $r\geq 1$. If this claim holds, then all \textit{Ruth-Aaron} $r$ for $n=p^2-1$ must satisfy $r\in(0,1)$. In fact, let $\frac{p-1}{2}=\prod_{i=1}^{k_1}s_i^{a_i}$ and $\frac{p+1}{2}=\prod_{i=1}^{k_2}t_i^{b_i}$, where $s_i,t_i$ are distinct prime factors of $\frac{p-1}{2}$ and $\frac{p+1}{2}$ respectively. Because $\frac{p-1}{2},\frac{p+1}{2}$ are consecutive thus relatively prime, we have
\begin{align}\label{grderivative}
g_n(r)\ &=\ f_r(n+1)-f_r(n)\nonumber\\
&=\ 2\cdot p^r-2\cdot 2^r-\sum_{i=1}^{k_1}a_i s_i^r-\sum_{i=1}^{k_2}b_it_i^r\nonumber\\
    g_n'(r)\ &=\ 2p^r\log p-2\cdot 2^r\log 2-\sum_{i=1}^{k_1}\left(a_is_i^r\log s_i\right)-\sum_{i=1}^{k_2}\left(b_it_i^r\log t_i\right)\nonumber\\
    &>\ 2p^r\log p-2\cdot 2^r\log 2-\sum_{i=1}^{k_i}\left(\left(\frac{p-1}{2}\right)^ra_i\log s_i\right)-\sum_{i=1}^{k_2}\left(\left(\frac{p+1}{2}\right)^rb_i\log t_i\right)\nonumber\\
    &=\ 2p^r\log p-2\cdot 2^r\log 2-\left(\frac{p-1}{2}\right)^r\log \frac{p-1}{2}-\left(\frac{p+1}{2}\right)^r\log \frac{p+1}{2}\nonumber\\
    &>\ 2p^r\log p-2\cdot 2^r\log 2-\left(\frac{p-1}{2}\right)^r\log p-\left(\frac{p+1}{2}\right)^r\log p.
\end{align}
Consider the function $v(r)=(1+x)^r-x^r-1$, where $r\geq 1$ is a real number and $x>0$. Because $v'(r)=(1+x)^r-x^r-1>x^r(\log (x+1)-\log x)>0$, we have $(1+x)^r>1+x^r$ when $r\geq 1$. Thus,
\begin{align}
    \left(\frac{p-1}{2}\right)^r+\left(\frac{p+1}{2}\right)^r\ &=\ \left(\frac{p-1}{2}\right)^r\left(1+\left(\frac{p+1}{p-1}\right)^r\right)\nonumber\\
    &<\ \left(\frac{p-1}{2}\right)^r\left(\frac{2p}{p-1}\right)^r\nonumber\\
    &=\ p^r.
\end{align}
Therefore,
\begin{align}
    g_n'(r)\ &>\ 2p^r\log p-2\cdot 2^r\log 2-p^r\log p\nonumber\\
    &=\ p^r\log p-2\cdot 2^r\log 2\nonumber\\
    &>\ 0
\end{align}
when $p\geq 3$. Therefore, when $r\geq 1$, $g_n(r)$ reaches its minimum at $r=1$, and $g_n(1)>0$, which means all \textit{Ruth-Aaron} $r$ with $n=p^2-1$ are between 0 and 1. We continue to show that $g_n(r)$ increases on $(0,1)$. If this holds, then there exists exactly one \textit{Ruth-Aaron} $r$ for a fixed $n$. From \eqref{grderivative} we have
\begin{align}\label{grderivativeinvolved}
    g_n'(r)\ &>\ p^r\log p-2\cdot 2^r\log 2-\left(\frac{p-1}{2}\right)^r\log \frac{p-1}{2}-\left(\frac{p+1}{2}\right)^r\log \frac{p+1}{2}.
\end{align}
We want to show that $\left(\frac{p-1}{2}\right)^r\log\frac{p-1}{2}+\left(\frac{p+1}{2}\right)\log\frac{p+1}{2}<2\cdot p^r\log\frac{p}{2}$. From Cauchy-Schwarz inequality \cite{Wei},
\begin{align}
    \left(\sum_{i=1}^ma_i^2\right)\cdot\left(\sum_{i=1}^mb_i^2\right)\ ^\geq\ \left(\sum_{i=1}^ma_ib_i\right)^2,
\end{align}
and equality holds only when $a_1/b_1=a_2/b_2=\dots=a_m/b_m$. Applying this result, we have
\begin{eqnarray}\label{cauchy}
    & &\left(\left(\frac{p-1}{2}\right)^r\log\frac{p-1}{2}+\left(\frac{p+1}{2}\right)^r\log\frac{p+1}{2}\right)^2\nonumber\\ & & \ \ \ \ \ \leq\ \left(\left(\frac{p-1}{2}\right)^{2r}+\left(\frac{p+1}{2}\right)^{2r}\right)\left(\log^2\frac{p-1}{2}+\log^2\frac{p+1}{2}\right).
\end{eqnarray}
Meanwhile, from Jensen's inequality \cite{Wei}, if a function $f$ is concave and $k_1,k_2,\dots,k_m$ are positive reals summing up to $1$, then
\begin{align}
    f\left(\sum_{i=1}^mk_ix_i\right)\ &\geq\ \sum_{i=1}^mk_if(x_i),
\end{align}
and equality holds if and only if $x_1=x_2=\cdots=x_m$.
Because the function $x^r$ is concave on $r<1$, we have
\begin{align}
    \left(\frac{p-1}{2}\right)^{2r}+\left(\frac{p+1}{2}\right)^{2r}\ &<\ 2\left(\frac{((p-1)/2)^2+((p+1)/2)^2}{2}\right)^{r}\nonumber\\
    &<\ 2\left(\frac{p-1}{2}+\frac{p+1}{2}\right)^{2r}\nonumber\\
    &=\ 2p^{2r}.
\end{align}
Moreover, because the function $(\log x)^2$ is concave on $x>e$, we have for most $x$,
\begin{align}
    \log^2\frac{p-1}{2}+\log^2\frac{p+1}{2}\ &<\ 2\log^2\frac{\left(\frac{p-1}{2}+\frac{p+1}{2}\right)}{2}\nonumber\\
    &=\ 2\log^2\frac{p}{2}.
\end{align}
Therefore, from \eqref{cauchy},
\begin{align}
     \left(\left(\frac{p-1}{2}\right)^r\log\frac{p-1}{2}+\left(\frac{p+1}{2}\right)\log\frac{p+1}{2}\right)^2\ &<\ 2p^{2r}\cdot2\log^2\frac{p}{2}\nonumber\\
     \left(\frac{p-1}{2}\right)^r\log\frac{p-1}{2}+\left(\frac{p+1}{2}\right)^r\log\frac{p+1}{2}\ &<\ 2p^r\log\frac{p}{2}.
\end{align}
Thus, for \eqref{grderivativeinvolved},
\begin{align}
    g_n'(r)\ &>\ 2\cdot p^r\log p-2\cdot 2^r\log 2-2\cdot p^r\log\frac{p}{2}\nonumber\\
    &=\ 2\log2(p^r-2^r)\nonumber\\
    &>\ 0.
\end{align}
Therefore, $g_n(r)$ increases on $r>0$. Meanwhile, because $g_n(0)<0$ and $g_n(1)>0$, there exists one and only $r$ with $f_r(n+1)=f_r(n)$, and $0<r<1$. As mentioned, we want to show that for any $n_0$ that is an $r_0-$th power \textit{Ruth-Aaron} number, we can always find $n>n_0$ that is an $r-$th power \textit{Ruth-Aaron} number, where $0<r<r_0$. Now, because $g_n(r)$ increases on $r>0$, it suffices to show that we can always find $n>n_0$ with $f_{r_0}(n+1)>f_{r_0}(n)$. We consider the function $x^r/\log x$. If we take our $n$ to be greater than $e^{1/r_0}$, then the function $x^{r_0}/\log x$ will be increasing. We thus have
\begin{align}
   \frac{P(n)^{r_0}}{\log P(n)}\ &<\ \frac{n^{r_0}}{\log n}
\end{align}
holds. Meanwhile, because $\frac{p-1}{2}$ and $\frac{p+1}{2}$ are consecutive integers, when they are both greater than $8$, at most one of them is of the form $2^a\cdot 3^b$, and the number of $n\leq x$ with $n=2^a\cdot 3^b$ is at most $O((\log x)^2)$ (Lemma \ref{lem:applyingfnpn}), which means we can apply Lemma \ref{lem:generalizedfnpn} to most $n$.  Thus,
\begin{align}
    f_{r_0}(n+1)-f_{r_0}(n)\ &=\ 2\cdot p^{r_0}-2\cdot 2^{r_0}-f_{r_0}\left(\frac{p-1}{2}\right)-f_{r_0}\left(\frac{p+1}{2}\right)\nonumber\\
    &>\ 2\cdot p^{r_0}-2\cdot 2^{r_0}-P\left(\frac{p-1}{2}\right)^{r_0}\log\left(\frac{p-1}{2}\right)/\log P\left(\frac{p-1}{2}\right)\nonumber\\
    &-\ P\left(\frac{p+1}{2}\right)^{r_0}\log\left(\frac{p+1}{2}\right)/\log P\left(\frac{p+1}{2}\right)\ (\textrm{Lemma \ref{lem:generalizedfnpn}})\nonumber\\
    &>\ 2\cdot p^{r_0}-2\cdot 2^{r_0}-\left(\frac{p-1}{2}\right)^{r_0}-\left(\frac{p+1}{2}\right)^{r_0}\nonumber\\
    &>\ 2\cdot\left( p^{r_0}- 2^{r_0}-\left(\frac{p+1}{2}\right)^{r_0}\right)\nonumber\\
    &>\ 0
\end{align}
when $p$ is sufficiently large. Therefore, when $n=p^2-1$ is sufficiently large, $f_{r_0}(n+1)>f_{r_0}(n)$, which means the \textit{Ruth-Aaron} $r$ with $f_r(n+1)=f_r(n)$ is less than $r_0$ and that we can always construct an infinite decreasing series of $r$, completing our proof.
\end{proof}

\section{Future Work}\label{sec:fut}
There are a few directions for future work on the \textit{Ruth-Aaron} function. First of all, although we improved Erd\Horig{o}s and Pomerance's result \cite{ErPom} by a factor of $(\log\log x)^3(\log\log\log x)/\log x$, it is still far from the actual number of $r-$th power \textit{Ruth-Aaron} numbers up to $x$, as they appear to be extremely rare. Therefore, further research might work towards tightening the existing bounds, as in many cases our estimation is relatively loose. Second, future work might consider placing the \textit{Ruth-Aaron} function in a linear equation. Inspired by Luca and Stănică's result \cite{LuSta} on the number of integers up to $x$ with $\varphi(n)=\varphi(n-1)+\varphi(n-2)$ (See Appendix \ref{app:arith}), we decide to apply the Fibonacci equation to \textit{Ruth-Aaron} numbers.
\begin{defn}
A \textit{Rabonacci} number $n$ is an integer which satisfies 
\begin{align}
    f(n)\ &=\ f(n-1)+f(n-2).
\end{align}
Similarly, a $r-$th power \textit{Rabonacci} number $n$ satisfies
\begin{align}
    f_r(n)\ &=\ f_r(n-1)+f_r(n-2).
\end{align}
\end{defn}
It's obvious that $\mathcal{R}_{(1,1,-1)}(x)+2$ (where $+2$ indicates adding $2$ to every element in the set $\mathcal{R}_{(1,1,-1)}(x)$) is the same as the set of \textit{Rabonacci} numbers not exceeding $x$. Meanwhile, it seems that \textit{Rabonacci} numbers might be even rarer than \textit{Ruth-Aaron} numbers, as there are $42$ \textit{Rabonacci} numbers below $10^6$, in contrast to $149$ \textit{Ruth-Aaron} numbers within the same range. Using an approach similar to the proof of Theorem \ref{thm:oxlnx1me},  we are able to present a partial result on the upper bound on the rabonacci numbers.
\begin{proposition}
When $P(n)\leq x^{1/3}$ and $f(n)>f(n-1)\geq f(n-2)$, the number of \textit{Rabonacci} numbers up to $x$ is at most
\begin{align}
    \#\mathcal{R}_{(1,1,-1)}(x)\ &=\ O\left(\frac{x(\log\log x\log\log\log x)^2}{(\log x)^2}\right).
\end{align}
\end{proposition}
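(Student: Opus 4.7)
The plan is to adapt the Erd\H{o}s--Pomerance proof of Theorem \ref{thm:oxlnx1me} to three consecutive integers. Set $p = P(n)$, $q = P(n-1)$, $r = P(n-2)$. First I would apply Lemma \ref{lem:lowerboundofp} to each of $n, n-1, n-2$, removing $O(x/\log^{2} x)$ exceptional values and letting me assume $p, q, r > x^{1/\log\log x}$. Combined with the hypothesis $P(n) \leq x^{1/3}$, this locates $p \in (x^{1/\log\log x}, x^{1/3}]$. Lemma \ref{lem:generalizedfnpn} (case $r=1$) then gives $p \leq f(n) \leq p\log\log x$ and analogously for $n-1, n-2$. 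The Rabonacci identity $f(n) = f(n-1) + f(n-2)$ forces $q + r \leq p\log\log x$, so $q, r \leq x^{1/3}(\log\log x)^{2}$. The ordering hypothesis $f(n) > f(n-1) \geq f(n-2)$ yields $f(n-1) \geq f(n)/2 \geq p/2$, which with $f(n-1) \leq q\log\log x$ pins $q$ to the short range $[p/(2\log\log x), p\log\log x]$.

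Writing $k = n/p$, $m = (n-1)/q$, $s = (n-2)/r$, I would split into two cases: (i) each of $f(k), f(m), f(s)$ is at most the corresponding prime divided by $\log^{2} x$, and (ii) at least one exceeds that threshold. In Case (i), the Rabonacci identity collapses to $|p - q - r| = O(p\log\log x/\log^{2} x)$. For fixed $q, r$ the prime $p$ lies in an interval of length $O(p\log\log x/\log^{2} x)$ around $q + r$; by Brun--Titchmarsh this interval contains at most $O(p(\log\log x)^{2}/\log^{3} x)$ primes. Combined with the CRT bound $1 + x/(pqr)$ for the count of $n \leq x$ satisfying $p \mid n$, $q \mid n-1$, $r \mid n-2$, and summed over $q, r$ via Lemma \ref{lem:eupevandu0pv0} and Mertens's theorem, Case (i) contributes a quantity of order $O\bigl(x(\log\log x)^{2}(\log\log\log x)^{2}/\log^{3} x\bigr)$, which is absorbed by the target.

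Case (ii) is the main workhorse and the main obstacle. Without loss of generality $f(k) \geq p/\log^{2} x$; introducing $t = P(k)$ and reapplying Lemma \ref{lem:generalizedfnpn} to $k$ pins $t$ into the interval $(p/(\log^{2} x\log\log x), p]$. I would then count $n$ via the congruences $pt \mid n$, $q \mid n-1$, $r \mid n-2$, giving at most $1 + x/(ptqr)$ per quadruple, and evaluate
\[
x\sum_{p, t, q, r}\frac{1}{ptqr}
\]
by summing in the order $t, q, r, p$: Lemma \ref{lem:eupevandu0pv0} produces $\sum_{t}1/t \ll \log\log x/\log p$ and $\sum_{q}1/q \ll \log\log\log x/\log p$; Mertens gives $\sum_{r}1/r \ll \log\log\log x$; and Lemma \ref{lem:plnp} absorbs the two remaining $1/\log p$ factors into $\sum_{p}1/(p(\log p)^{2}) \ll (\log\log x)^{2}/\log^{2} x$. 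The genuinely delicate point is the sub-case in which more than one of $f(k), f(m), f(s)$ is simultaneously large; the naive four-prime CRT estimate is not sharp enough there, and one must exploit the residual additive relation $p + t \approx q + r$ together with a Brun-type sieve estimate on pairs $(p, t)$ of primes summing to a value in a short interval. Once all sub-cases are combined, the total matches the claimed bound $O\bigl(x(\log\log x)^{2}(\log\log\log x)^{2}/\log^{2} x\bigr)$.
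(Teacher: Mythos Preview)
The paper does not actually supply a proof of this proposition; it only states the result with the remark that ``an approach similar to the proof of Theorem~\ref{thm:oxlnx1me}'' applies. Your outline is consistent with that hint, and your Case~(i) is set up correctly. Case~(ii), however, has a genuine gap.

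When you pass to the four congruences $pt\mid n$, $q\mid n-1$, $r\mid n-2$ and bound each count by $1+x/(ptqr)$, the ``$1$'' contributes $\sum_{(p,t,q,r)}1$, i.e., the total number of admissible quadruples. With $p,t\le x^{1/3}$ and $q,r\le x^{1/3}\log\log x$, this is of order $x^{4/3}(\log\log x)^{2}/(\log x)^{4}$, which dwarfs the target bound. (In the Ruth--Aaron argument of Theorem~\ref{thm:oxlnx1me} this step succeeds precisely because only three primes of size $\le x^{1/3}$ enter, and $\pi(x^{1/3})^{3}\approx x/(\log x)^{3}$.) You evaluate only the main term $x\sum 1/(ptqr)$ and tacitly drop the ``$1$'' term; that is the gap, and it is not confined to the sub-case you flag where several of $f(k),f(m),f(s)$ are large. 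A workable repair is to split on $p\le x^{1/4}$ versus $x^{1/4}<p\le x^{1/3}$: in the first range the four-prime ``$1$'' term is $O\bigl(x(\log\log x)^{2}/(\log x)^{4}\bigr)$ and harmless, while in the second range one reverts to the three-prime CRT with $(p,t,q)$ only, where the ``$1$'' term is again of size $x(\log\log x)/(\log x)^{3}$.

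A secondary issue: your main-term arithmetic in Case~(ii) actually yields
\[
x\,\log\log x\cdot(\log\log\log x)^{2}\sum_{p>x^{1/\log\log x}}\frac{1}{p(\log p)^{2}}
\ \ll\ \frac{x(\log\log x)^{3}(\log\log\log x)^{2}}{(\log x)^{2}},
\]
one factor of $\log\log x$ worse than the stated bound. This comes from bounding $\sum_{r}1/r\ll\log\log\log x$ uniformly in $p$; for $p$ near $x^{1/\log\log x}$ the $r$-range $(x^{1/\log\log x},\,p\log\log x]$ is extremely short and $\sum_{r}1/r$ is $o(1)$, so a $p$-dependent estimate on this sum (via \eqref{eq:u0pv0strong} rather than Mertens) is needed to recover the claimed exponent.
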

Moreover, Erd\Horig{o}s has conjectured the following regarding the infinitude of \textit{Ruth-Aaron} numbers.
\begin{conj}[Erd\Horig{o}s]
There are infinitely many $n$ with
\begin{align}
    f(n)\ &=\ f(n+1).
\end{align}
\end{conj}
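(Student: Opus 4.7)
Erd\Horig{o}s's conjecture has been open since 1978 and is likely out of reach of unconditional methods, so the realistic target is the route carried out by Nelson, Penney, and Pomerance \cite{NePePo}: deduce infinitude from Schinzel's Hypothesis H by building a parametric family of Ruth--Aaron pairs whose full prime factorizations are determined by the simultaneous primality of several linear forms in one integer variable $k$.

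The first step is to find small coprime positive integers $c, c'$, a partition $\{1,\dots,r\} = I \sqcup J$, and linear forms $L_1,\dots,L_r \in \Z[k]$ such that the two polynomial identities
\[
c' \prod_{j \in J} L_j(k)\ -\ c \prod_{i \in I} L_i(k)\ =\ 1
\]
and
\[
f(c)\ +\ \sum_{i \in I} L_i(k)\ =\ f(c')\ +\ \sum_{j \in J} L_j(k)
\]
both hold identically in $k$. On any integer $k$ for which every $L_i(k)$ is a prime disjoint from the prime divisors of $cc'$, the full factorizations of $n = c \prod_I L_i(k)$ and $n+1 = c' \prod_J L_j(k)$ are transparent, and the two identities together force $f(n) = f(n+1)$ automatically. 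I would search for admissible data by fixing small $|I|, |J|$, writing each $L_i$ with undetermined integer coefficients, expanding the two identities, and matching the coefficient of each power of $k$, which reduces the search to a finite Diophantine system; for appropriately chosen $(c, c')$ one expects (and Nelson--Penney--Pomerance exhibit) a nontrivial integer solution family.

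The second step is to verify the hypotheses of Schinzel's Hypothesis H on $(L_1,\dots,L_r)$: irreducibility (automatic for linear forms with coprime coefficients), positive leading coefficients (arrange by sign), and the Bouniakowsky no-fixed-prime-divisor condition on $\prod_i L_i(k)$, which reduces to a finite congruence check modulo primes dividing the product of $cc'$ and all leading coefficients. Schinzel's Hypothesis H then supplies infinitely many $k$ making every $L_i(k)$ simultaneously prime, and by the construction above this produces infinitely many Ruth--Aaron numbers.

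The main obstacle is that Schinzel's Hypothesis H is itself an outstanding open conjecture containing the twin prime conjecture, so this plan does not yield an unconditional proof; the underlying obstruction is the parity problem in sieve theory, which prevents lower-bounding the count of simultaneously prime values of two or more linear forms. As a fallback deliverable I would derive a rigorous heuristic by modeling $f(n) - f(n+1)$ as a nearly mean-zero arithmetic random variable whose variance follows from the joint distribution of $P(n)$ and $P(n+1)$ controlled in Section \ref{sec:pre}, leading to a prediction of the form $\#\mathcal{R}_{(1,-1)}(x) \asymp x/(\log x)^\alpha$ for an explicit exponent $\alpha$ compatible with the numerical tables in the introduction, providing quantitative evidence beyond the bare conjecture.
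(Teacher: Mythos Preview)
The paper does not prove this statement at all: it is recorded as an open \emph{conjecture} in the Future Work section (Section~\ref{sec:fut}), with no proof or proof sketch given. Your assessment that the unconditional problem is out of reach is exactly the paper's stance, and the conditional route via Schinzel's Hypothesis~H that you outline is precisely the one the paper attributes to Nelson, Penney, and Pomerance~\cite{NePePo} in the introduction. So there is nothing to compare on the proof side; your proposal correctly identifies the known conditional argument and its limitations, which is as far as the paper itself goes.
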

Meanwhile, inspired by his conjecture on the Euler Totient Function \cite{Er}, we have
\begin{conj}
There exists, for every $k\geq 1$, consecutive integers $n,n+1,\dots,n+k$ such that
\begin{align}\label{ktuple2}
    f(n)\ &=\ f(n+1)\ =\ \cdots\ =\ f(n+k).
\end{align}
\end{conj}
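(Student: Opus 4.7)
The plan is to reduce the conjecture to Schinzel's Hypothesis H (or a Bateman--Horn variant), extending the conditional argument of Nelson, Penney, and Pomerance \cite{NePePo} for the case $k=1$. Purely unconditional techniques seem out of reach, since even the infinitude of ordinary \textit{Ruth-Aaron} pairs ($k=1$) is open.

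The first step is to construct, for each fixed $k$, a parametric family of $(k+1)$-tuples of consecutive integers whose $f$-values coincide by design. I would look for positive integers $a_0,a_1,\dots,a_k$, pairwise coprime in the right residue pattern so that $a_iL_i(t)+1=a_{i+1}L_{i+1}(t)$ up to reindexing, together with integer linear forms $L_i(t)=\alpha_it+\beta_i$ satisfying
\begin{align}
a_iL_i(t)\ &=\ a_0L_0(t)+i\qquad(0\le i\le k),\\
f(a_i)+L_i(t)\ &=\ f(a_0)+L_0(t)\qquad(0\le i\le k).
\end{align}
The first system is $k$ linear Diophantine conditions tying the $\alpha_i$ and $\beta_i$ to the $a_i$. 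The second, valid precisely when each $L_i(t)$ is prime (so complete additivity of $f$ applies on the nose), collapses to the affine constraints $L_i(t)-L_0(t)=f(a_0)-f(a_i)$. These can be realized by choosing all $\alpha_i$ equal and shifting $\beta_i$ by the required constants, which in turn fixes the $a_i$ up to a finite search.

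The second step is the conditional conclusion. Having produced admissible $(a_i,L_i)$, I would verify that the tuple $(L_0,\dots,L_k)$ satisfies the hypotheses of Schinzel's Hypothesis H: each $L_i$ is irreducible over $\Z$ (automatic for linear forms with unit leading coefficient) and no prime $p$ divides $\prod_{i=0}^k L_i(t)$ for every integer $t$. The admissibility condition reduces to a finite congruence check at primes $p\le k+1$, which can be engineered by choosing the $\beta_i$ modulo small primes. Schinzel's Hypothesis H then yields infinitely many $t\in\N$ for which $L_0(t),\dots,L_k(t)$ are simultaneously prime, and setting $n=a_0L_0(t)$ produces an infinite family of $(k+1)$-tuples with $f(n)=f(n+1)=\cdots=f(n+k)$.

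The main obstacle is the algebraic feasibility of the first step for large $k$: as $k$ grows, the consistency relations $a_iL_i(t)-a_jL_j(t)=i-j$ together with the $f$-matching constraints become increasingly rigid, and there is no a priori reason that compatible integer parameters $(a_i,\alpha_i,\beta_i)$ exist for every $k$. A secondary obstacle is local obstruction: the $L_i$ constructed from the Diophantine conditions may share a fixed prime divisor, and removing that obstruction without destroying the $f$-matching requires delicate adjustment. If the rigid algebraic template fails, a backup plan is to allow the $L_i$ to be higher-degree polynomials, or to replace the exact identity $f(a_i)+L_i(t)=f(a_j)+L_j(t)$ with a congruential relaxation and control the remainder via the near-Ruth-Aaron machinery of Theorem~\ref{thm:ruthaarondifference}; this would only give a near-tuple statement rather than the full conjecture, but suggests the conjecture is consistent with known heuristics and gives a route to partial results while the full statement remains open.
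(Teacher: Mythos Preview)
The statement you are trying to prove is not a theorem in the paper but a \emph{conjecture}, listed in Section~\ref{sec:fut} (Future Work) with no proof attempted. The paper explicitly says ``So far, we are able to confirm that there is at least one $n$ for $k=1$ and $k=2$,'' and then immediately proposes the opposite-direction Conjecture that there are only finitely many triples. There is therefore no paper proof against which to compare your proposal.

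On the merits of your sketch itself: the reduction to Schinzel's Hypothesis~H is the natural heuristic, and for $k=1$ this is exactly the Nelson--Penney--Pomerance argument you cite. But your own discussion of obstacles is the honest assessment: the algebraic template in the first step is not actually carried out, and for general $k$ it is not clear that admissible $(a_i,\alpha_i,\beta_i)$ exist at all. In particular, the system $a_iL_i(t)=a_0L_0(t)+i$ with all $\alpha_i$ equal forces $a_i\alpha_i=a_0\alpha_0$ for every $i$, so all $a_i$ are equal, which then makes $a_i\beta_i-a_0\beta_0=i$ incompatible with the $f$-matching constraint $\beta_i-\beta_0=f(a_0)-f(a_i)=0$ unless $i=0$. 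So the specific parametrization you describe collapses already at $k=1$; one needs a genuinely different construction (e.g., distinct $\alpha_i$ and a more careful Diophantine setup), and you have not supplied one. As it stands this is a plan rather than a proof, and the paper does not claim otherwise.
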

So far, we are able to confirm that there is at least one $n$ for $k=1$ and $k=2$. In fact, as of now, two integers are found to be \textit{Ruth-Aaron} numbers when $k=2$. Due to the rarity of \textit{Ruth-Aaron} triples, we conjecture that
\begin{conj}
There are finitely many integers $n$ with
\begin{align}
    f(n)\ &=\ f(n+1)\ =\ f(n+2).
\end{align}
\end{conj}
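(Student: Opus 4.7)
The plan is to bootstrap the sieve-theoretic machinery behind Theorem \ref{thm:generalizednumberofn} by imposing the second equality $f(n+1)=f(n+2)$ as an additional constraint on top of $f(n)=f(n+1)$. First I would apply Lemma \ref{lem:lowerboundofp} to discard all but $O(x/(\log x)^2)$ integers, so that each of $P(n)$, $P(n+1)$, $P(n+2)$ may be assumed to exceed $x^{1/\log\log x}$; writing $p=P(n)$, $q=P(n+1)$, $s=P(n+2)$, the arguments of Section \ref{sec:non} together with Lemma \ref{lem:generalizedfnpn} then force the three primes to be mutually comparable (within factors of $\log\log x$) and each bounded by roughly $2x^{1/2}\log x$.

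Next, I would invoke Pomerance's factorization trick on each equality separately. From $f(n)=f(n+1)$, writing $n=pk$, $n+1=qm$, and $k=tu$, one obtains the identity
\[
(up-m)(ut-m)\ =\ um(f(u)-f(m))-u+m^2,
\]
and the analogous manipulation of $f(n+1)=f(n+2)$ (writing $n+2=s\ell$ and decomposing $\ell$) yields a second polynomial identity in the variables describing the factorization of $n+2$. Treated jointly, the two identities should cut the divisor-function count dramatically: rather than applying Lemma \ref{lem:quadratic} to a single quadratic, I would look for a multidimensional analogue controlling the simultaneous divisor representations of two polynomials, aiming for a count of valid triples of order $x^{o(1)}$, or optimistically $O((\log x)^C)$. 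One could also exploit the Chinese Remainder Theorem: the three congruences $n\equiv 0\pmod p$, $n\equiv -1\pmod q$, $n\equiv -2\pmod s$ single out a unique residue class modulo $pqs$, so the primary counting quantity is over admissible triples of primes rather than over $n$ itself.

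The principal obstacle is that sieve methods on their own cannot yield an absolute constant; their ceiling is a bound that still grows slowly with $x$. Achieving genuine finiteness almost certainly demands an effective Diophantine input, namely a proof that the affine variety carved out by the two polynomial identities contains only finitely many integer points in the relevant ranges, a Siegel--Faltings-type statement that may ultimately rest on unproven hypotheses such as the $abc$ conjecture or Schinzel's Hypothesis H. A realistic intermediate milestone that I would pursue first is the bound $\#\{n\le x : f(n)=f(n+1)=f(n+2)\} = O((\log x)^C)$ for some absolute $C$, obtained by exploiting the double polynomial structure; even falling short of absolute finiteness, such an estimate would constitute powerful evidence for the conjecture and would sharpen the existing numerical data (only two examples below $10^{10}$) into a rigorous statement.
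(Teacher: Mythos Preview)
The statement you are attempting to prove is presented in the paper as a \emph{conjecture}, not as a theorem; the paper does not contain a proof, and to our knowledge no proof exists. What the paper does offer is a heuristic conditional argument: under the twin assumptions that $\#\mathcal{R}_{(1,-1)}(x)\le O(x^{(1-\epsilon)/2})$ and that Ruth--Aaron numbers are uniformly distributed, a simple dyadic probabilistic calculation shows the expected number of triples is $O(1)$. Both hypotheses are far beyond what is currently known (the best unconditional bound is roughly $x(\log\log x)^3\log\log\log x/(\log x)^2$, nowhere near a power saving), so this is evidence, not proof.

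Your proposal is honest about its own limitations, and your diagnosis is correct: the sieve machinery of Section~\ref{sec:non} plus Pomerance's factorization identity can at best produce a bound that tends to infinity with $x$, and genuine finiteness would require a Diophantine finiteness theorem of Siegel--Faltings type (or an input like $abc$) that is not available. The intermediate goal you suggest, namely $\#\{n\le x : f(n)=f(n+1)=f(n+2)\}=O((\log x)^C)$, is a sensible research target, but note that even this is not established anywhere in the paper; the partial result on Rabonacci numbers (Proposition in Section~\ref{sec:fut}) gives only $O\bigl(x(\log\log x\cdot\log\log\log x)^2/(\log x)^2\bigr)$ under restrictive side conditions, and there is no indication that imposing a second equality buys a polylogarithmic bound without substantial new ideas. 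In short, your plan is a reasonable outline for \emph{attacking} the conjecture, but it is not a proof, and the paper makes no claim to have one either.
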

It's obvious that in \eqref{ktuple2}, as $k$ increases, the number of $n$ satisfying the equation decreases significantly. Therefore, if we could prove the conjecture for $k=2$, then for all $k\geq 2$, the equation $f(n)=f(n+1)=\cdots=f(n+k)$ has only finitely many solutions.\\
\indent Clearly we are unable to prove the conjecture at this point; however, considering the scarcity of \textit{Ruth-Aaron} pairs, we can show that the conjecture hold true if we allow a better result of $\#\mathcal{R}_{(1,-1)}(x)$ for the estimation of the number of integers up to $x$ with $f(n)=f(n+1)=f(n+2)$. In fact, if $\#\mathcal{R}_{(1,-1)}(x)\leq O\left({x^{(1-\epsilon)/2}}\right)$, for any $0<\epsilon<1$, and \textit{Ruth-Aaron} numbers are uniformly distributed, then standard probabilistic models predict that the number of \textit{Ruth-Aaron} triples (and hence quadruples and higher) is finite. We could create a more sophisticated model that takes into account the decay in the density of \textit{Ruth-Aaron} numbers, which will also give a finite bound on the number of triples; we prefer to do the simple model below to highlight the idea. 

\begin{proposition}
If $\#\mathcal{R}_{(1,-1)}(x)\leq O\left({x^{(1-\epsilon)/2}}\right)$, for any $0<\epsilon<1$, and \textit{Ruth-Aaron} numbers are uniformly distributed, then the number of \textit{Ruth-Aaron} triples is $O(1)$.
\end{proposition}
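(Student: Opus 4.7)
The plan is a first-moment calculation under the probabilistic model dictated by the two hypotheses. First, I would convert the counting bound $\#\mathcal{R}_{(1,-1)}(x) \leq O(x^{(1-\epsilon)/2})$ together with the uniformity assumption into a local density: the probability that a given integer $n$ is \textit{Ruth-Aaron} should behave like $p_n = O(n^{-(1+\epsilon)/2})$, i.e., the discrete derivative of the counting function. This step is essentially bookkeeping — the hypothesis tells us the cumulative count, and uniform distribution converts a cumulative bound into a pointwise one.

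Next, I would set up the expectation. An integer $n$ is the smallest member of a \textit{Ruth-Aaron} triple iff both $n$ and $n+1$ are \textit{Ruth-Aaron}. Interpreting the uniform-distribution hypothesis as approximate independence between these two events, I would bound
\begin{align*}
\mathbb{E}\bigl[\#\text{triples}\bigr] \;=\; \sum_{n=1}^{\infty} p_n\, p_{n+1} \;\leq\; C \sum_{n=1}^{\infty} n^{-(1+\epsilon)} \;<\; \infty,
\end{align*}
since $1+\epsilon > 1$ gives a convergent $p$-series. Finiteness of this sum is the crux of the argument and it is what forces $\epsilon > 0$ in the hypothesis: if we only had $R(x) \ll x^{1/2}$, the exponent on $n$ would be exactly $-1$ and the series would diverge.

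Finally, I would upgrade ``finite expectation'' to ``finite a.s.'' via Borel--Cantelli. The events $A_n = \{n,\, n+1 \text{ are both Ruth-Aaron}\}$ are not independent for consecutive $n$ because $A_n$ and $A_{n+1}$ share the factor $p_{n+1}$, so I would partition $\mathbb{N}$ into even and odd indices. On each residue class the pairs $(n,n+1)$ are disjoint, hence the events are independent under the model, and $\sum P(A_n) < \infty$ still holds. The first Borel--Cantelli lemma applied to each subsequence gives that almost surely only finitely many $A_n$ occur, so the total count of triples is $O(1)$.

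The main obstacle is interpretational rather than technical: the proposition \emph{grants} uniform distribution, but one has to read this as approximate independence between consecutive integers being \textit{Ruth-Aaron}, which is a stronger statement. Granted that, the argument reduces to the convergence of $\sum n^{-(1+\epsilon)}$. A secondary issue, which the simple model ignores, is that genuine \textit{Ruth-Aaron}-ness at $n$ and at $n+1$ is arithmetically coupled through the shared structure of $f$ on neighbors (via common small-prime divisors of $n(n+1)$), so a rigorous treatment would require a second-moment refinement; the proposition acknowledges this by calling the model ``simple.''
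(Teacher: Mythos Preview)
Your proposal is correct and follows essentially the same first-moment heuristic as the paper: convert the counting bound into a local density of order $n^{-(1+\epsilon)/2}$, square it for the probability that consecutive integers are both Ruth--Aaron, and observe that the resulting series $\sum n^{-(1+\epsilon)}$ converges. The paper organizes this via a dyadic decomposition over intervals $[2^x,2^{x+1})$ and stops at finite expectation, whereas you work pointwise and append a Borel--Cantelli step; these are presentational differences only.
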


\begin{proof}
First, we consider the probability that integer $n$ is a \textit{Ruth-Aaron} number when $2^x\leq n< 2^{x+1}$. Because we are assuming a uniform distribution, the probability is
\begin{align}
    O\left(\frac{2^{x(1-\epsilon)/2}}{2^x}\right)\ &=\ O\left(2^{-(1+\epsilon) x/2}\right). 
\end{align}
Then the probability of two \textit{Ruth-Aaron} numbers being consecutive is $O\left(2^{-(1+\epsilon x)}\right)$. Since there are $2^x-2$ consecutive integer triples in $[2^x,2^{x+1})$, the expected number of \textit{Ruth-Aaron} triples between $2^x$ and $2^{x+1}$ is at most
\begin{align}
 O\left( (2^x-2)\cdot 2^{-(1+\epsilon)x}\right)\ &\ll\ O\left(2^x\cdot2^{-(1+\epsilon)x}\right)\nonumber\\
 &=\ O\left(\frac{1}{2^{\epsilon x}}\right).
\end{align}
Therefore, the total number of integers with $f(n)=f(n+1)=f(n+2)$ is at most
\begin{align}
    O\left(\sum_{x=1}^\infty\frac{1}{2^{\epsilon x}}\right)\ &\ll\ O(1),
\end{align}
which means there are finitely many \textit{Ruth-Aaron} triples if $\#\mathcal{R}_{(1,-1)}(x)$ has a rate of growth of $O\left(x^{(1-\epsilon)/2}\right)$ or lower, and \textit{Ruth-Aaron} numbers are uniformly distributed.
\end{proof}

\newpage
\appendix
\addtocontents{toc}{\protect\setcounter{tocdepth}{1}}
\section{List of Other Arithmetic Functions with Similar Results}
\label{app:arith}
\subsection{Euler Totient Function}
Erd\Horig{o}s, Pomerance, and Sárközy \cite{ErPomSa} proved that the number of $n$ up to $x$ with
\begin{align}
    \varphi(n)\ &=\ \varphi(n+1),
\end{align}
where $\varphi(x)$ is the Euler Totient function, or the number of integers up to $x$ that are relatively prime to $x$, is at most $x/\exp\{(\log x)^{1/3}\}$ \cite{ErPomSa}. A similar result holds for the Sigma function $\sigma(x)=\sum_{d|n}d$. In the meantime, they conjectured that for every $\epsilon>0$ and $x>x_0(\epsilon)$, the equations $\varphi(n)=\varphi(n+1)$ and $\sigma(n)=\sigma(n+1)$ each have at least $x^{1-\epsilon}$ solutions up to $x$ \cite{ErPomSa}. \\
\indent Meanwhile, Luca and Stănică  \cite{LuSta} proved that the number of $n$ up to $x$ with $\phi(n)=\phi(n-1)+\phi(n-2)$ is
\begin{align}
    O\left(C(t)\cdot\frac{x\log\log\log x}{\sqrt{\log\log x}}\right),
\end{align}
where $1\leq t<\exp(\log x/\log\log x)$ and $C(t)$ is a constant dependent upon $t$. Such $n$ are referred to as \textit{Phibonacci} numbers \cite{Ba}.

\subsection{Prime Omega Function} Let the unique prime factorization of $n$ be $n=p_1^{a_1}p_2^{a_2}\cdots p_k^{n_k}$, then the Prime Omega function $\omega(n)=k$.
Erd\Horig{o}s et al. conjectured that the number of $n\leq x$ with
\begin{align}
    \omega(n)\ &=\ \omega(n+1)
\end{align}
is $O\left(\frac{x}{\sqrt{\log\log x}}\right)$ \cite{ErPomSa}.

\section{Important Theorems and Conjectures}\label{app:important}
\subsection{The Prime Number Theorem}
The Prime Number Theorem gives an asymptotic form for the prime counting function $\pi(x)$. In 1795, the 15-year-old Gauss conjectured that
\begin{align}
    \pi(x)\ &\sim\ \frac{x}{\log x}.
\end{align}
He later refined his estimate to
\begin{align}
    \pi(x)\ &\sim\ \textrm{Li}(x),
\end{align}
where $\textrm{Li}(x)=\int_2^x\frac{x}{\ln x}dx$ is the logarithmic integral. This result has then been refined numerous times up to this day, involving other arithmetic functions as well as modification on the error term \cite{Wei}. In this paper, we apply the weaker version of the Prime Number Theorem:
\begin{align}
    \pi(x)\ &=\ \frac{x}{\log x}+O\left(\frac{x}{(\log x)^2}\right)\nonumber\\
    &=\ O\left(\frac{x}{\log x}\right)
\end{align}
in order to compute the number of $n$ up to $x$ under restrictions on the largest prime factor of $n$, $n+1$, and $n/P(n)$.

\subsection{On the Largest Prime Factor}\label{bruijnresult}
De Bruijn \cite{Bru} introduces the upper bound on the number of $n\leq x$ with prime factors no greater than $y$, denoted as $\Psi(x,y)$:
\begin{align}
    \Psi(x,y)\ &<\ x\exp\left\{-\frac{\log\log\log y}{\log y}\log x+\log\log y+\log\log y +O\left(\frac{\log\log y}{\log\log\log y}\right)\right\}.
\end{align}
He then refined the result to, for $2<y\leq x$,
\begin{align}\label{bruijnmostrefined}
    \log\Psi(x,y)\ &=\ Z\left\{1+O\left(\frac{1}{\log x}\right)+O\left(\frac{1}{\log\log x}\right)+O\left(\frac{1}{u+1}\right)\right\},
\end{align}
where $u=\log x/\log y$ and
\begin{align}
    Z\ &=\ \left(\log \left(1+\frac{\log y}{\log x}\right)\right)\cdot\frac{\log x}{\log y}+\left(\log\left(1+\frac{\log x}{y}\right)\right)\cdot\frac{y}{\log y}.
\end{align}
As stated in Lemma \ref{lem:lowerboundofp}, De Bruijn's result provides us with a crucial lower bound on the largest prime factor of $P(n)$, which we then used extensively throughout the paper.

\subsection{The Chinese Remainder Theorem}
\begin{thm}[The Chinese Remainder Theorem]
If $m_1,m_2,\dots,m_k$ are pairwise relatively prime positive integers, and if $a_1,a_2,\dots,a_k$ are any integers, then the simultaneous congruences
\begin{align}
    x\ &\equiv\ a_1\ (\textrm{mod }m_1)\nonumber\\
    x\ &\equiv\ a_2\ (\textrm{mod }m_2)\nonumber\\
    &\cdots\nonumber\\
    x\ &\equiv\ a_k\ (\textrm{mod }m_k)
\end{align}
have solution that is unique modulo the product $m_1m_2\cdots m_k$ \cite{DiPeiSal}. In our case, for $p=P(n),q=P(n+1)$, we have
\begin{align}
    n\ &\equiv\ 0\ (\textrm{mod }p)\nonumber\\
    n\ &\equiv\ -1\ (\textrm{mod }q).
\end{align}
\end{thm}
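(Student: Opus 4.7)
The plan is to split the result into an existence statement and a uniqueness statement; both reduce to Bezout's identity applied to pairwise coprime integers. I would set $M := \prod_{i=1}^{k} m_i$ and, for each index $i$, let $N_i := M/m_i$. Because the $m_j$ are pairwise coprime, $\gcd(N_i, m_i)=1$, so Bezout provides integers $y_i, z_i$ with $N_i y_i + m_i z_i = 1$; equivalently $N_i y_i \equiv 1 \pmod{m_i}$. This single algebraic fact is what the rest of the proof is built from.

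For existence, I would write down the explicit candidate $x := \sum_{i=1}^{k} a_i N_i y_i$ and verify directly that it solves the system. For any fixed $j$, every term with $i \neq j$ is divisible by $m_j$ because $m_j \mid N_i$ whenever $i \neq j$, leaving $x \equiv a_j N_j y_j \equiv a_j \pmod{m_j}$, exactly as required. This gives existence without needing induction on $k$.

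For uniqueness modulo $M$, suppose $x$ and $x'$ both solve the system. Then $m_j \mid (x-x')$ for every $j$. Since pairwise coprime divisors of a common integer multiply to give their least common multiple, iterating this fact (or invoking it once for the product) yields $M \mid (x-x')$, so $x \equiv x' \pmod{M}$.

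The only nontrivial ingredient is producing the modular inverse $y_i$ of $N_i$ modulo $m_i$, and this is precisely where \emph{pairwise} coprimality is used: if we had only assumed some weaker joint-coprimality condition then some $N_i$ could share a prime factor with $m_i$ and no inverse would exist, so one should not try to weaken the hypothesis here. For the application in the present paper one only needs the case $k=2$ with $m_1=p$, $m_2=q$ (distinct primes, automatically coprime), where Bezout gives $u,v \in \mathbb{Z}$ with $up+vq=1$ and the unique solution modulo $pq$ is simply $x \equiv a_2 u p + a_1 v q \pmod{pq}$; the general case adds only bookkeeping over the $k$ indices.
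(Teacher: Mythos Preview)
Your proof is correct and is the standard constructive argument for the Chinese Remainder Theorem. However, note that the paper itself does not prove this statement: it is presented in Appendix~\ref{app:important} as a well-known result, quoted with a citation to \cite{DiPeiSal}, and then immediately specialized to the case $k=2$ with $m_1=p$, $m_2=q$ for the application used elsewhere in the paper. So there is no ``paper's own proof'' to compare against; your write-up would serve as a self-contained supplement if one were desired.
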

Applying the Chinese Remainder Theorem, we have the number of $n$ up to $x$ is at most $1+\left\lfloor\frac{x}{pq}\right\rfloor$. This result is used extensively in the paper.

\subsection{Mersenne Primes}
We first considered using the conjectured infinitude of Mersenne primes, or primes of the form $2^n-1$, to show the infinitude of $r-$th power \textit{Ruth-Aaron} numbers for real $r$. In fact, Lenstra, Pomerance, and Wagstaff \cite{Wag} have conjectured the infinitude of Mersenne primes, and they also proposed that the number of Mersenne primes less than $x$ is approximately $e^{\gamma}\log_2\log_2(x)$, where $\gamma$ is the Euler-Mascheroni constant. Mathematicians have been in search of Mersenne primes both manually and computationally. As of now, the first $51$ Mersenne primes have been found, with the largest being $2^{8258933}-1$, which was discovered in December 2018 and is the largest publicly prime known.




\ \\

\end{document}